\numberwithin{equation}{section}
\newtheorem{thm}{Theorem} [section]
\newtheorem{lemm}{Lemma} [section]
\newtheorem{coro}{Corollary} [section]
\newtheorem{prop}{Proposition} [section]
\newtheorem{remark}{Remark}[section]
\newtheorem{claim}{Claim}[section]
\newcommand\bN{\mathbb N}
\newcommand\bR{\mathbb R}
\newcommand\bT{\mathbb T}
\title[ compressible MHD boundary layer]{Local-in-time well-posedness  for Compressible MHD boundary layer}
\author[Yongting Huang]{Yongting Huang}
\address{Yongting Huang
	\newline\indent
	Department of Mathematics,
	City University of Hong Kong,
	Tat Chee Avenue, Kowloon, Hong Kong}
\email{ythuang7-c@my.cityu.edu.hk}
\author[Cheng-Jie Liu]{Cheng-Jie Liu}
\address{Cheng-Jie Liu
\newline\indent
Institute of Natural Sciences, Shanghai Jiao Tong University, Shanghai, China}
\email{liuchengjie@sjtu.edu.cn}
\author[Tong Yang]{Tong Yang}
\address{Tong Yang
\newline\indent School of Mathematical Sciences, Shanghai Jiao Tong University, Shanghai, 200240, P.R. China
\newline\indent
Department of Mathematics,
City University of Hong Kong,
Tat Chee Avenue, Kowloon, Hong Kong
}
\email{matyang@cityu.edu.hk}
\subjclass[2010]{35A07, 35M33, 35Q35, 76N20, 76W05}
\date{}
\keywords{compressible MHD, boundary layers, local well-posedness, non-monotonic velocity fields.}
\begin{document}
\bibliographystyle{elsart-num-sort}
\maketitle

\begin{abstract}
In this paper, we are concerned with the motion of electrically conducting fluid governed by
the two-dimensional non-isentropic viscous compressible MHD system on the half plane,  
with no-slip condition for velocity field, perfect conducting condition for magnetic field and Dirichlet boundary condition for temperature on the boundary. When the viscosity,
heat conductivity and magnetic diffusivity coefficients tend to zero in the same rate, there is a boundary layer that is described by a Prandtl-type system. By applying a
coordinate transformation in terms of stream function as motivated by
the recent work \cite{liu2016mhdboundarylayer} on the incompressible MHD system, under the non-degeneracy condition on the tangential magnetic field, we obtain the local-in-time well-posedness
of the boundary layer system in weighted Sobolev spaces. 


\end{abstract}

\section{Introduction and main result}

The compressible magnetohydrodynamics (MHD) system describes the mutual interaction of fluid and magnetic field where the velocity
and magnetic field are governed by the compressible Navier-Stokes equations
coupled with the Maxwell equations. In two-dimensional (2D) case, it takes the form as follows:
\begin{align}\label{mhd}
\left\{
\begin{aligned}
&\displaystyle \partial_t\rho +\nabla\cdot(\rho \mathbf{u})=0,\\
&\displaystyle \rho\big(\partial_t \mathbf{u}+(\mathbf{u}\cdot\nabla)\mathbf{u}\big)
+\nabla p(\rho,\theta)-(\nabla\times \mathbf{H})\times \mathbf{H}
=\mu\Delta\mathbf{u}+(\lambda+\mu)\nabla(\nabla\cdot\mathbf{u}),\\
&\displaystyle c_V\rho\big(\partial_t\theta+(\mathbf{u}\cdot\nabla)\theta\big)+p(\rho,\theta)\nabla\cdot\mathbf{u}
=\kappa\Delta\theta+\lambda(\nabla\cdot\mathbf{u})^2+2\mu |\mathfrak{D}(\mathbf{u})|^2
+\nu|\nabla\times \mathbf{H}|^2,\\
&\displaystyle \partial_t \mathbf{H}-\nabla\times (u\times \mathbf{H})=\nu\Delta \mathbf{H},\qquad \nabla\cdot \mathbf{H}=0.
\end{aligned}
\right.
\end{align}
Here the unknown functions $\rho,\ \mathbf{u}=(u_1,u_2)^{\it T},\ \theta$ and $\mathbf{H}=(h_1,h_2)^{\it T}$ represent the density, velocity, absolute temperature and magnetic field, respectively. $p(\rho,\theta)$ denotes the pressure of fluid. $c_V$ is the specific heat capacity; $\lambda$ and $\mu$ are the viscosity coefficients satisfying $\mu>0,\lambda+\mu>0$; $\kappa$ is the heat conductivity coefficient and $\nu$ is the magnetic diffusivity coefficient.
The deformation tensor $\mathfrak{D}(\mathbf{u})$ is given by
\[
\displaystyle \mathfrak{D}(\mathbf{u})=\frac{1}{2}(\nabla \mathbf{u}+(\nabla \mathbf{u})^{\it T}).
\]
We are concerned with the initial boundary value problem for (\ref{mhd}) in a periodic domain $\{(t,x,y)|t\in[0,T],x\in\bT,y\in\bR_+\},$
with no-slip boundary condition on velocity, Dirichlet boundary condition for temperature and perfect conducting boundary condition for magnetic field, that is,
\begin{equation}\label{bou-con}
\mathbf{u}|_{y=0}=\mathbf{0},\quad \theta|_{y=0}=\theta^\ast(t,x),\quad (\partial_yh_1,h_2)|_{y=0}=\mathbf{0}.
\end{equation}
For simplicity, we consider the
ideal gas, i.e.,
\begin{align}\label{def-pressure}
p=p(\rho,\theta)=R\rho\theta,
\end{align}
with some constant $R>0$.

In this paper, we are interested in the asymptotic behavior of solutions to the problem (\ref{mhd})-(\ref{bou-con}) as the viscosities $\mu, \lambda$, heat conductivity $\kappa$ and magnetic diffusivity $\nu$ tend to zero.
Formally, when all these physical parameters vanish, the  system (\ref{mhd}) is
reduced to the ideal compressible MHD system:
\begin{equation}\label{euler}
\left\{
\begin{aligned}
&\partial_t\rho^e +\nabla\cdot(\rho^e \mathbf{u}^e)=0,\\
&\rho^e\big(\partial_t \mathbf{u}^e+(\mathbf{u}^e\cdot\nabla)\mathbf{u}^e\big)+\nabla p(\rho^e,\theta^e)
-(\nabla\times \mathbf{H}^e)\times \mathbf{H}^e=0,\\
&c_V\rho^e\big(\partial_t\theta^e+(\mathbf{u}^e\cdot\nabla)\theta^e\big)
+p(\rho^e,\theta^e)\nabla\cdot\mathbf{u}^e=0,\\
&\partial_t \mathbf{H}^e-\nabla\times (u^e\times \mathbf{H}^e)=0,\qquad \nabla\cdot \mathbf{H}^e=0.
\end{aligned}
\right.
\end{equation}
Corresponding to the no-slip boundary condition on velocity field and the perfectly conducting boundary condition on magnetic field in (\ref{bou-con}), a natural boundary condition for system (\ref{euler}) is
\begin{equation}\label{impermeable}
(u_2^e,h_2^e)|_{y=0}=\mathbf{0}.
\end{equation}
In fact, under this impermeable boundary condition (\ref{impermeable}), the boundary $\{y=0\}$ is a particle-path so that the tangential velocity $u_1^e$, the temperature $\theta^e$ and the tangential magnetic $h_1^e$ are determined by the initial data at $\{t=0\}$.

The inconsistency of boundary conditions between (\ref{impermeable}) and (\ref{bou-con}) gives rise to a thin transition layer near the physical boundary $\{y=0\}$ in the vanishing viscosity, heat conductivity and magnetic diffusivity limit process, cf.~\cite{prandtl1904boundarylayer}.
Precisely, this transition layer is called the boundary layer, in which the behavior of the tangential velocity, temperature and tangential magnetic field change dramatically from $(u_1,\theta,\partial_yh_1)=(0,\theta^\ast(t,x),0)$ to $(u_1^e,\theta^e,h_1^e)(t,x,0)$ determined by (\ref{euler})-(\ref{impermeable}).

Before exhibiting the description of behavior of boundary layers, let us state some known results first.
The boundary layer theory, introduced by L. Prandtl, cf.~\cite{prandtl1904boundarylayer} for the incompressible Navier-Stokes equations with no-slip boundary condition, is concerned with the asymptotic analysis of the behavior of fluid inside the boundary layer, which can be described by the so-called Prandtl equations.
From a mathematical point of view, the first systematic study of Prandtl equations was developed by Oleinik~\cite{oleinik1963prandtl} with
detailed explanation in~\cite{oleinik1999boundary}, and under the monotonicity condition on tangential velocity with respect to the normal variable to the boundary, the local (in time) well-posedness of Prandtl equations was obtained by using the Crocco transformation. Furthermore, with an additional favorable condition on the pressure, a global (in time) weak solution was obtained by Xin-Zhang~\cite{xin2004prandtl}.
Recently, Alexander-Wang-Xu-Yang~\cite{alexandre2015prandtl} and Masmoudi-Wong~\cite{masmoudi2015prandtl} proved the well-posedness result independently in the framework of weighted Sobolev space by using energy method. Accordingly,
when the monotonicity assumption on the velocity is violated,  
the Prandtl equations are ill-posed in finite order Sobolev spaces, cf.~\cite{eweinan2000navier,eweinan1997prandtl,gargano2009prandtl,gerard2010prandtl, gerard2012remarks, grenier2000instability, grenier2016spectral,grenier2016shearflow,hong2003prandtl}. Note that the above results are basically limited to the case of two space variables, and there are few results with respect to three space variables, cf.~\cite{liu2016prandtl,  liu2016weakprandtl,liu2017prandtl}. It's worth noting that without the monotonicity assumption, the well-posedness theory of Prandtl equations can also be established in the framework of analytic functions, cf.~\cite{sammartino1998navier,caflisch2000prandtl,lombardo2003boundary,ignatova2016prandtl,zhang2016prandtl} or in the Gevrey class, cf.~\cite{gerard2013prandtl,gerard2016gevrey,li2016gevrey,li2016prandtl}, with respect to both two and three space variables.

For compressible fluids, the boundary layers are complicated, since in general, heat transfers rapidly near the boundary and it gives rise to the thermal layer. Moreover, there is interaction between viscous layer and thermal layer, which leads to a more complicated structure of boundary layers,  cf.~\cite{karman1938boundary,cope1948laminar,laurmann1951boundary}.
Limited to isentropic viscous fluids, where there is no thermal layer, the boundary layer can be described by a Prandtl-type system, and its
 well-posedness theory with respect to two space variables is established by Wang-Xie-Yang~\cite{wang2015navier} and Gong-Guo-Wang~\cite{gong2016boundary} independently under the same monotonicity condition as the Prandtl equations.
Very recently, Liu-Wang-Yang~\cite{liu2017thermallayer} studied the behavior of both viscous layer and thermal layer for two-dimensional non-isentropic compressible fluids in different viscosity and heat conductivity limits.

When the fluid is coupled with magnetic field, the boundary layer phenomenon is different since boundary layers for magnetic field may exist, cf. \cite{gerard2017mhd}. In particular in the case that both the Reynolds number and the magnetic Reynolds number tend to infinity at the same rate, i.e., finite magnetic Prandtl number,  the boundary layer equations derived from the MHD system are quite different from the classical Prandtl equations. A notable example is that, for two-dimensional incompressible MHD system with no-slip boundary condition on velocity and perfectly conducting boundary condition on magnetic field, as long as the tangential component of magnetic field is nondegeneracy, a well-posedness theory for boundary layer problems is established by Liu-Xie-Yang~\cite{liu2016mhdboundarylayer, liu2016mhdboundarylayer2} without monotonicity assumption on tangential velocity (i.e., recirculation is allowed within the boundary layer). This reveals
mathematically the fact that magnetic field has a stabilizing effect on boundary layers, which could provide a mechanism for containment of, for instance, the high temperature gas.

In this paper, we are concerned with the case that viscosities, heat conductivity and resistivity parameters are of the same rate. To this end, by using a small parameter $0<\varepsilon\ll1$, we set
\[
(\mu,\lambda,\kappa,\nu)=\varepsilon(\tilde\mu,\tilde\lambda,\tilde\kappa,\tilde\nu),
\]
for some constants $\tilde\mu,\ \tilde\lambda,\ \tilde\kappa,\ \tilde\nu>0$. Therefore, based on the Prandtl's assertions, the thickness of boundary layer shall be of order $\sqrt{\varepsilon}$, and its behavior is described by the Prandtl-type equations, which can be derived from the compressible MHD system (\ref{mhd})-(\ref{bou-con}). More precisely,
inside the boundary layer, set
\[
\displaystyle \tilde{t}=t,\quad \tilde{x}=x,\quad \tilde{y}=\frac{y}{\sqrt{\varepsilon}},
\]
and the new unknown functions 
\[\displaystyle 
(\tilde{\rho},\tilde{u}_1,\tilde{\theta},\tilde{h}_1)(\tilde{t},\tilde{x},\tilde{y})=(\rho,u_1,\theta,h_1)(t,x,y),
\quad(\tilde{u}_2,\tilde{h}_2)(\tilde{t},\tilde{x},\tilde{y})=\frac{1}{\sqrt{\varepsilon}}(u_2,h_2)(t,x,y).
\]
Plug this ansatz into the equations (\ref{mhd}), the leading order term gives
\begin{equation}\label{lead}
\left\{
\begin{aligned}
&\partial_t\rho+(u_1\partial_x+u_2\partial_y)\rho+\rho(\partial_x u_1+\partial_y u_2)=0,\\
&\rho\{\partial_tu_1+(u_1\partial_x+u_2\partial_y)u_1\}+\partial_x(p+\frac{1}{2}h^2_1)-(h_1\partial_x+h_2\partial_\eta) h_1=\mu\partial_{y}^2u_1,\\
&\partial_y(p+\frac{1}{2}h^2_1)=0,\\
&c_V\rho\{\partial_t\theta+(u_1\partial_x+u_2\partial_y)\theta\}+p(\partial_x u_1+\partial_y u_2)
=\kappa\partial_{y}^2\theta+\mu(\partial_y u_1)^2+\nu(\partial_y h_1)^2,\\
&\partial_th_1+\partial_y(u_2h_1-u_1h_2)=\nu\partial_{y}^2h_1,\\
&\partial_th_2-\partial_x(u_2h_1-u_1h_2)=\nu\partial_{y}^2h_2,\\
&\partial_xh_1+\partial_y h_2=0,
\end{aligned}
\right.
\end{equation}
in the region $\displaystyle \{(t,x,y)|t\in[0,T],x\in\bT,y\in\bR_+\}$, where the tildes are omitted for simplicity of notations.
 Meanwhile,  the boundary conditions (\ref{bou-con}) become
\begin{equation}\label{lead-bou}
(u_1,u_2,\partial_y h_1,h_2)|_{y=0}=\mathbf{0},\quad \theta|_{y=0}=\theta^\ast(t,x).
\end{equation}
In addition, the boundary layer  should match the corresponding  outflow, which is actually the trace of ideal MHD flow satisfying the system \eqref{euler}-\eqref{impermeable}. In other words, one has the following far-field conditions:
\begin{align}\label{farfield}
	\lim\limits_{y\rightarrow+\infty}(\rho, u_1, \theta, h_1)(t,x,y)=(\rho^e, u_1^e, \theta^e, h_1^e)(t,x,0).
\end{align}
Thus, it turns out the boundary layer satisfies the boundary value problem \eqref{lead}-\eqref{farfield}. 

Let us simplify the equations \eqref{lead} first. The equation $(\ref{lead})_3$ says that the leading order of the total pressure $\displaystyle p+\frac{1}{2}h^2_1$ is invariant cross the boundary layer, and then it should match the outflow pressure, 
i.e., the trace of the total pressure of the ideal MHD equations (\ref{euler}). Consequently, it yields by \eqref{def-pressure} and \eqref{farfield} that
\begin{equation}\label{pressure}
(p+\frac{1}{2}h^2_1)(t,x,y)\equiv\big(R\rho^e\theta^e+\frac{1}{2}(h^e_1)^2\big)(t,x,0)=:P(t,x).
\end{equation}
Then, since we are concerned with the case $\rho, \theta>0$, \eqref{def-pressure} and \eqref{pressure} give
\begin{align}\label{def-rho}
	\rho(t,x,y)=\frac{2P(t,x)-h_1^2(t,x,y)}{2R\theta(t,x,y)},
\end{align}
which implies
\begin{align}\label{positive-p}
	2P(t,x)-h_1^2(t,x,y)>0.
\end{align}

Now, thanks to \eqref{def-rho} one can eliminate the unknown function $\rho(t,x,y)$ from the equations \eqref{lead}. Precisely, $(\ref{lead})_5$ can be rewritten by $(\ref{lead})_7$ as
\begin{equation}\label{leadh1}
\partial_th_1+(u_1\partial_x+u_2\partial_y)h_1+h_1(\partial_x u_1+\partial_y u_2)
=\nu\partial_{y}^2h_1+(h_1\partial_x+h_2\partial_y)u_1.
\end{equation}
Then, substituting \eqref{def-rho} into $\eqref{lead}_1$,  
it follows that by using$\eqref{lead}_4$ and \eqref{leadh1},
\begin{align}\label{div}
\partial_x u_1+\partial_y u_2=~&\frac{(1-a)}{Q}h_1\big[(h_1\partial_x+h_2\partial_y)u_1+\nu\partial_y^2h_1\big]-\frac{(1-a)}{Q}( P_t+ P_x u_1)\nonumber\\
&
+\frac{a}{Q}\big[\kappa\partial_{y}^2\theta+\mu(\partial_y u_1)^2+\nu(\partial_y h_1)^2\big],
\end{align}
where the positive constant $\displaystyle a:=\frac{R}{c_V+R}<1$ and
\begin{align*}
Q=Q(t,x,y):=	P(t,x)+\frac{1}{2}(1-2a)h_1^2(t,x,y),
\end{align*}
provided the fact from \eqref{positive-p}:
\begin{align*}
	Q(t,x,y)>0.
\end{align*}

Next, plugging \eqref{pressure} and \eqref{def-rho} into $\eqref{lead}_2$, \eqref{def-rho} and \eqref{div} into $\eqref{lead}_4$,  \eqref{div} into \eqref{leadh1} respectively, one can rewrite the equations for $u_1, \theta$ and $h_1$. Also, it is noted that $(\ref{lead})_6$ is a direct consequence of $(\ref{lead})_5, (\ref{lead})_7$ and the boundary condition
$h_2|_{y=0}=0$ in (\ref{lead-bou}). Therefore, the boundary value problem \eqref{lead}-\eqref{farfield} is reduced to the equation \eqref{def-rho} for $\rho$ coupled with the following boundary value problem for $(u_1, u_2, \theta, h_1, h_2)$ in $\{(t,x,y)|t\in[0,T],x\in\bT,y\in\bR_+\}$,
\begin{equation}\label{simp-lead-noP}
\left\{
\begin{aligned}
&\partial_tu_1+(u_1\partial_x+u_2\partial_y)u_1-\frac{R\theta}{P-\frac{1}{2}h^2_1}(h_1\partial_x+h_2\partial_y)h_1+\frac{RP_x \theta}{P-\frac{1}{2}h^2_1}
=\frac{\mu R\theta}{P-\frac{1}{2}h^2_1}\partial_{y}^2u_1,\\
&\partial_t\theta+(u_1\partial_x+u_2\partial_y)\theta
+\frac{a\theta h_1}{Q}(h_1\partial_x+h_2\partial_y)u_1-\frac{a(P_t +P_x u_1)\theta}{Q}\\
&\qquad=\frac{a\theta(P+\frac{1}{2}h_1^2)}{Q(P-\frac{1}{2}h^2_1)}
\big[\kappa\partial_{y}^2\theta+\mu(\partial_y u_1)^2+\nu(\partial_y h_1)^2\big]-\frac{a\nu\theta h_1}{Q}\partial_{y}^2h_1,\\
&\partial_t h_1+(u_1\partial_x+u_2\partial_y)h_1-\frac{P-\frac{1}{2}h_1^2}{Q}(h_1\partial_x+h_2\partial_y)u_1-\frac{(1-a)(P_t+P_x u_1)h_1}{Q}\\
&\qquad=\frac{\nu(P-\frac{1}{2}h_1^2)}{Q}\partial_{y}^2h_1-\frac{ah_1}{Q}\big[\kappa\partial_{y}^2\theta+\mu(\partial_y u_1)^2+\nu(\partial_y h_1)^2\big],\\
&\partial_x u_1+\partial_y u_2=\frac{(1-a)}{Q}h_1\big[(h_1\partial_x+h_2\partial_y)u_1+\nu\partial_y^2h_1\big]-\frac{(1-a)}{Q}( P_t+ P_x u_1)\\
&\qquad+\frac{a}{Q}\big[\kappa\partial_{y}^2\theta+\mu(\partial_y u_1)^2+\nu(\partial_y h_1)^2\big],\\
&\partial_xh_1+\partial_y h_2=0,\\
&(u_1,u_2,\partial_y h_1,h_2)|_{y=0}=\mathbf{0},\quad \theta|_{y=0}=\theta^\ast(t,x),\\
&\lim_{y\rightarrow +\infty}(u_1,\theta,h_1)(t,x,y)=(u_1^e, \theta^e, h_1^e )(t,x,0)=:(U,\Theta,H)(t,x).
\end{aligned}
\right.
\end{equation}
Note that classical solution $(u_1, u_2, \theta, h_1, h_2)$ to the boundary value problem \eqref{simp-lead-noP}, together with the function $\rho$ defined by \eqref{def-rho}, satisfy the original problem \eqref{lead}-\eqref{farfield}.
Hence, we endow the system \eqref{simp-lead-noP} with the initial data
\begin{align}\label{initial}
	(u_1,\theta,h_1)|_{t=0}~=~(u_{1,0},\theta_0,h_{1,0})(x,y),\quad (x,y)\in\mathbb{T}\times\mathbb{R}_+,
\end{align}
and focus on the initial-boundary value problem \eqref{simp-lead-noP}-\eqref{initial} hereafter.

The main result of this paper can be stated as follows.

\begin{thm}\label{original-sol}
For the initial-boundary value problem (\ref{simp-lead-noP})-\eqref{initial} with a smooth outflow $(U,\Theta,H,P)(t,x)$,
assume that the initial data $(u_{1,0},\theta_0,h_{1,0})(x,y)$ and the boundary value $\theta^\ast(t,x)$
are smooth, compatible and  satisfy
\begin{equation}\label{posi}
\theta^\ast(t,x),~\theta_0(x,y),~ h_{1,0}(x,y)\geq 2\delta,
\quad \frac{1}{2}\big(h_{1,0}(x,y)\big)^2 \leq P(0,x)-2\delta,
\end{equation}
for $t\in[0,T],~ (x,y)\in\mathbb{T}\times\mathbb{R}_+$ with some constant $\delta>0$. 
Then there exists $T_\ast\in(0,T]$ and a unique classical solution $(u_1, u_2, \theta,h_1, h_2)$ to the problem (\ref{simp-lead-noP})-\eqref{initial}
in the region $\displaystyle D_{T_\ast}:=\{(t,x,y)|t\in[0,T_\ast],x\in\bT,y\in\bR_+\}$ with the following properties:
\begin{enumerate}
  \item $\theta(t,x,y),~ h_{1}(t,x,y)\geq \delta,\quad \big(h_{1}(x,y)\big)^2/2 \leq P(t,x)-\delta\quad in~D_{T_\ast}$;
  \item $(u_1,\theta,h_1), \partial_y(u_1,\theta,h_1)$
and $\partial_y^2(u_1,\theta,h_1)$ are continuous and bounded in $D_{T_\ast}$,
$\partial_t(u_1,\theta,h_1)$ and $\partial_x(u_1,\theta,h_1)$ are continuous and bounded in any compact set of $D_{T_\ast}$;
  \item $(u_2,h_2)$ and $\partial_y(u_2,h_2)$
are continuous and bounded in any compact set of $D_{T_\ast}$.
\end{enumerate}
\end{thm}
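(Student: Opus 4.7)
The overall strategy is to adapt the stream-function approach of \cite{liu2016mhdboundarylayer} from the incompressible setting to the non-isentropic, compressible system \eqref{simp-lead-noP}. The key device is that the solution we seek satisfies $h_1\ge\delta>0$ and $\partial_xh_1+\partial_yh_2=0$, so there exists a unique magnetic stream function $\psi(t,x,y)$ with $\partial_y\psi=h_1$, $\partial_x\psi=-h_2$ and $\psi|_{y=0}=0$; the non-degeneracy makes $y\mapsto\psi$ a smooth diffeomorphism onto $\mathbb{R}_+$. I pass to new independent variables $(\tau,\xi,\eta)=(t,x,\psi(t,x,y))$. Under this change, the magnetic transport operator $h_1\partial_x+h_2\partial_y$ reduces to $h_1\partial_\xi$, so the most dangerous cross terms in \eqref{simp-lead-noP} are straightened out, at the cost of introducing lower-order coefficients depending on $\partial_{\tau,\xi}\psi$. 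The viscous operators $\partial_y^2$ become $(h_1\partial_\eta)^2$ modulo lower order, and the equation $\eqref{simp-lead-noP}_4$ together with $\eqref{simp-lead-noP}_5$ recovers $u_2$ and $h_2$ from $(u_1,\theta,h_1,\psi)$.

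First I would write the reformulated system as a quasilinear parabolic-type system for the shifted unknowns $(u_1-U,\theta-\Theta,h_1-H)$ in $\mathbb{T}\times\mathbb{R}_+$, coupled to the transport equation for $\psi$. Approximate solutions are then constructed by a linearized iteration in which the transport coefficients in the nonlinear terms are frozen at the previous iterate and the resulting linear parabolic system is solved by standard regularity theory, after which I would derive uniform a priori estimates in a weighted Sobolev space $X_s$ with polynomial weight in $\eta$. Multiplying the three evolution equations by carefully chosen test functions (schematically, $u_1-U$, a combination involving $\theta-\Theta$, and $(h_1-H)/h_1$), applying $\partial_\xi^\alpha$ derivatives, and integrating over $\mathbb{T}\times\mathbb{R}_+$, I aim for an inequality of the form
\begin{equation*}
\frac{d}{d\tau}\|(u_1-U,\theta-\Theta,h_1-H)\|_{X_s}^2+D(\tau)\le C\bigl(1+\|(u_1-U,\theta-\Theta,h_1-H)\|_{X_s}\bigr)^{2k},
\end{equation*}
where $D$ collects the parabolic dissipation generated by $(h_1\partial_\eta)^2$. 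The pointwise positivity $h_1\ge\delta$, $\theta\ge\delta$ and $P-h_1^2/2\ge\delta$ is propagated on a short time $T_\ast$ via a maximum-principle argument using the $L^\infty$-control on $\partial_\tau$ and $\partial_\eta$ furnished by the same estimate. Once the bounds are uniform in the approximation, existence follows by compactness of the iterates, uniqueness by an analogous estimate on the difference of two solutions, and the inverse change of variables (available thanks to $h_1\ge\delta$) pulls the solution back to the Eulerian coordinates $(t,x,y)$; supplementing with $\rho$ defined by \eqref{def-rho} recovers the classical solution to \eqref{simp-lead-noP}–\eqref{initial} with the regularity stated in (1)–(3).

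The principal obstacle lies in closing the energy estimate. Compared with the incompressible MHD case of \cite{liu2016mhdboundarylayer}, the right-hand sides of the $\theta$- and $h_1$-equations in \eqref{simp-lead-noP} contain the quadratic dissipation terms $\mu(\partial_yu_1)^2$, $\nu(\partial_yh_1)^2$ and $\kappa\partial_y^2\theta$ multiplied by nonlinear coefficients in $(P,h_1,\theta,Q)$, which introduce top-order couplings between $u_1$, $\theta$ and $h_1$ absent from the incompressible problem. Moreover, the compressibility source $\partial_xu_1+\partial_yu_2\ne 0$ forces the $\xi$-derivatives of $u_2$ (and hence of $\psi$) to appear at top order through the $u_2\partial_y$-transport. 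Locating the correct linear combination of test functions, together with an $\eta$-weight that may have to depend on $P-h_1^2/2$, so as to symmetrize the cross-coupling and cancel the highest-order derivatives while preserving positivity of the dissipation, is the crux of the analysis and is where the novelty of the argument over \cite{liu2016mhdboundarylayer,liu2017thermallayer} is expected to lie.
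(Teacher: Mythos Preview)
Your starting point---the magnetic stream function $\psi$ and the coordinate change $(\tau,\xi,\eta)=(t,x,\psi)$---matches the paper. But the difficulty you flag in your final paragraph is not the actual one, and this reveals a gap in your understanding of what the transformation accomplishes. You observe correctly that $h_1\partial_x+h_2\partial_y$ becomes $h_1\partial_\xi$. What you miss is that the \emph{material} convection $\partial_t+u_1\partial_x+u_2\partial_y$ also simplifies: because $\psi$ itself satisfies the transport equation
\[
\partial_t\psi+(u_1\partial_x+u_2\partial_y)\psi=\nu\partial_y^2\psi,
\]
the operator $\partial_t+u_1\partial_x+u_2\partial_y$ becomes $\partial_\tau+\hat u_1\partial_\xi+\nu\hat h_1\partial_\eta\hat h_1\,\partial_\eta$ in the new variables. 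Thus $u_2$ disappears \emph{completely} from the transformed system; there is no residual coupling to a ``transport equation for $\psi$,'' no top-order $\xi$-derivative of $u_2$, and no regularity loss. The transformed system \eqref{lead-trans} is a closed quasilinear parabolic system in $(\hat u_1,\hat\theta,\hat h_1)$ alone, and this is precisely why the change of variables works.

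The genuine technical content, which your proposal does not touch, lies elsewhere. First, the paper replaces $\hat h_1$ by $q=\hat h_1^2/2$, which puts the system in the compact form $\partial_\tau\bm v+\bm A(\bm v)\partial_\xi\bm v+\bm f(\bm v,\partial_\eta\bm v)+\bm g(\bm v)=\bm B(\bm v)\partial_\eta^2\bm v$. Second, the ``correct linear combination of test functions'' you allude to is found explicitly: a positive symmetric matrix $\bm S(\bm v)$ is exhibited for which $\bm S\bm A$ is symmetric and $\bm S\bm B$ is diagonal positive definite, so the hyperbolic and parabolic parts are simultaneously symmetrized. Third---and this is where the real work is---the mixed Dirichlet/Neumann boundary conditions $(u_1,\partial_\eta q)|_{\eta=0}=0$, $\theta|_{\eta=0}=\theta^\ast$ produce nontrivial boundary integrals when one integrates $\bm S\bm B\partial_\eta^2$ by parts at order $k$. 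Controlling these requires writing the quadratic term $\bm f$ as $\bm F(\bm v,\partial_\eta\bm v)\partial_\eta\bm v$ with a very specific choice of $\bm F$, so that certain boundary contributions cancel (Claims~\ref{claim1} and~\ref{claim2} in the paper). No polynomial $\eta$-weights are used; the estimates are in unweighted $\mathcal H^k$ after subtracting a background profile $\bar{\bm v}$ that homogenizes the boundary and far-field data.
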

\begin{remark}
The far-field conditions given in \eqref{simp-lead-noP} are compatible with the equations $(\ref{simp-lead-noP})_1$-$(\ref{simp-lead-noP})_3$. Indeed, by restricting the equations in \eqref{euler} on the boundary $\{y=0\}$, and using the boundary conditions \eqref{impermeable} and the fact that $\displaystyle \rho^e(t,x,0)=\frac{P-H^2/2}{R\Theta}(t,x)>0$, we have the following equations by direct calculation, 
\begin{equation*}
\left\{
\begin{aligned}
&U_t +UU_x -\frac{R\Theta H}{P-\frac{1}{2}H^2_1}H_x+\frac{RP_x \Theta}{P-\frac{1}{2}H^2},\\
&\Theta_t+U\Theta_x+\frac{a\Theta H^2}{P+\frac{1}{2}(1-2a)H^2}U_x
-\frac{a(P_t +P_x U)\Theta}{P+\frac{1}{2}(1-2a)H^2}=0,\\
&H_t+UH_x-\frac{(P-\frac{1}{2}H^2)H}{P+\frac{1}{2}(1-2a)H^2}U_x
-\frac{(1-a)(P_t +P_x U)H}{P+\frac{1}{2}(1-2a)H^2}=0.
\end{aligned}
\right.
\end{equation*}
\end{remark}

The rest of the paper is organized as follows. In Section~\ref{transform}, we introduce a nonlinear coordinate transform for the problem \eqref{simp-lead-noP}-\eqref{initial}, and establish the well-posedness theory of the resulting problem. 
The local-in-time existence and uniqueness of the solution to the original problem (\ref{simp-lead-noP})-\eqref{initial} 
will be given in Section~\ref{original}.

\section{Well-posedness of the transformed system}\label{transform}


To show the local well-posedness of the system (\ref{simp-lead-noP})-\eqref{initial}, similar as the classical Prandtl equations, the major difficulty comes from the regularity loss of $x-$derivative. Indeed, we find that from \eqref{simp-lead-noP} the vertical velocity $u_2$, respectively the vertical magnetic component $h_2$, is determined by the equation $\eqref{simp-lead-noP}_4$ and the boundary condition $u_2|_{y=0}=0$, respectively the equation $\eqref{simp-lead-noP}_5$ and the boundary condition $h_2|_{y=0}=0$, which creates a loss of $x-$derivative. The key ingredient of the current work is to develop a new system, which can avoid the regularity loss, from (\ref{simp-lead-noP})-\eqref{initial} by using a nonlinear coordinate transform. Such transformation is
introduced in the work \cite{liu2016mhdboundarylayer}, and based on nondegeneracy assumption on  tangential magnetic component, e.g., $h_1(t,x,y)>0.$ After that, the local well-posedness of the transformed system can be obtained by a proper iteration scheme.


\subsection{Transformation and preliminaries}
As in \cite{liu2016mhdboundarylayer}, by the divergence-free condition
\(
\partial_xh_1+\partial_y h_2=0,
\) and the boundary condition $h_2|_{y=0}=0$, there exists a stream function $\psi=\psi(t,x,y)$ such that
\begin{align}\label{def-psi}
	h_1=\partial_y\psi,\quad h_2=-\partial_x\psi,\quad \psi|_{y=0}=0.
\end{align}
Moreover, by using \eqref{lead} and \eqref{lead-bou}, it follows that $\psi$ satisfies
\begin{equation}\label{stream}
\partial_t\psi+(u_1\partial_x+u_2\partial_y)\psi=\nu\partial_{y}^2\psi.
\end{equation}
Under the assumption of non-zero tangential magnetic component:
\begin{equation}\label{inver-ass}
h_1=\partial_y\psi>0,
\end{equation}
we can introduce the following invertible transformation
\begin{align}\label{coor-trans}
	\tau=t,\quad\xi=x,\quad \eta=\psi(t,x,y),
\end{align}
and new unknown functions
\begin{align}\label{fun-trans}
(\hat{u}_1,\hat{\theta},\hat{h}_1)(\tau,\xi,\eta)~:=~(u_1,\theta,h_1)(t,x,y).
\end{align}

By the coordinate transformation \eqref{coor-trans}, combining with \eqref{def-psi}, \eqref{inver-ass} and the boundedness assumption on $h_1$, the region $\{(t,x,y)| t\in[0,T], x\in\mathbb{T}, y\in\mathbb{R}_+\}$ is changed into the new one 
$$\Omega_T:=[0,T]\times \Omega:=\{(\tau,\xi,\eta)|\tau\in[0,T],\xi\in\bT,\eta\in\bR_+\},$$
  and the boundary $\{y=0\}$ ($y\rightarrow+\infty$, resp.) is changed into $\{\eta=0\}$ ($\eta\rightarrow+\infty$, resp.). Therefore, combining \eqref{fun-trans} and the boundary conditions of \eqref{simp-lead-noP} yields
\begin{align}\label{bd_lead-trans}
	(\hat u_1,\partial_\eta \hat h_1)|_{\eta=0}=\mathbf{0},\quad \hat\theta|_{\eta=0}=\theta^\ast(\tau,\xi),\quad
\lim_{\eta\rightarrow +\infty}(\hat u_1,\hat\theta,\hat h_1)(\tau,\xi,\eta)=(U,\Theta,H)(\tau,\xi).
\end{align}
Then, it implies by combining \eqref{fun-trans} with the initial data \eqref{initial} that
\begin{align}\label{in_lead-trans}
	(\hat u_1,\hat \theta,\hat h_1)|_{\tau=0}=(u_{1,0},\theta_0,h_{1,0})\big(\xi,y(\xi,\eta)\big)=:(\hat u_{1,0},\hat \theta_0,\hat h_{1,0})(\xi,\eta),
\end{align}
where the function $y(\xi,\eta)$ is determined by the following relation:
\begin{align*}
	\displaystyle \eta=\int_0^{y(\xi,\eta)}h_{1,0}(\xi,\zeta)d\zeta.
\end{align*}
Next, with the help of \eqref{stream}, applying \eqref{coor-trans} and \eqref{fun-trans} to the equations in \eqref{simp-lead-noP} yields the corresponding equations for $(\hat{u}_1,\hat{\theta},\hat{h}_1)(\tau,\xi,\eta)$. Thus, we take \eqref{bd_lead-trans} and \eqref{in_lead-trans} into account, and eventually gain the initial-boundary value problem for $(\hat{u}_1,\hat{\theta},\hat{h}_1)(\tau,\xi,\eta)$ in the domain $\Omega_T$ as follows: 
\begin{equation}\label{lead-trans}
\left\{
\begin{aligned}
&\partial_\tau u_1+u_1\partial_\xi u_1-\frac{R\theta}{P-\frac{1}{2}h^2_1}h_1\partial_\xi h_1
+(\nu-\frac{\mu R\theta}{P-\frac{1}{2}h^2_1})h_1\partial_\eta h_1\partial_\eta u_1+\frac{RP_\xi \theta}{P-\frac{1}{2}h^2_1}
=\frac{\mu R\theta h_1^2}{P-\frac{1}{2}h^2_1}\partial_{\eta}^2u_1,\\
&\partial_\tau\theta+\frac{a\theta h_1^2}{Q}\partial_\xi u_1+u_1\partial_\xi\theta
-\frac{\mu a\theta h_1^2(P+\frac{1}{2}h_1^2)}{Q(P-\frac{1}{2}h^2_1)}(\partial_\eta u_1)^2
+\Big(\nu-\frac{\kappa a\theta(P+\frac{1}{2}h_1^2)}{Q(P-\frac{1}{2}h^2_1)}\Big)h_1\partial_\eta h_1\partial_\eta\theta\\
&\quad-\frac{\nu a\theta(P+\frac{1}{2}h^2_1)}{Q(P-\frac{1}{2}h^2_1)}(h_1\partial_\eta h_1)^2-\frac{a(P_\tau+P_\xi u_1)\theta}{Q}=\frac{a\theta h_1^2}{Q}
\Big[\kappa\frac{P+\frac{1}{2}h_1^2}{P-\frac{1}{2}h^2_1}\partial_{\eta}^2\theta-\nu\partial_\eta(h_1\partial_\eta h_1)\Big],\\
&\partial_\tau h_1-\frac{(P-\frac{1}{2}h_1^2)h_1}{Q}\partial_\xi u_1+u_1\partial_\xi h_1
+\frac{\mu ah_1^3}{Q}(\partial_\eta u_1)^2+\frac{\kappa a h_1^2}{Q}\partial_\eta h_1\partial_\eta\theta
+\frac{\nu(P+\frac{1}{2}h^2_1)h_1}{Q}(\partial_\eta h_1)^2\\
&\quad-\frac{(1-a)(P_\tau+P_\xi u_1)h_1}{Q}=\frac{(P-\frac{1}{2}h_1^2)h_1}{Q}
\Big[\nu\partial_\eta(h_1\partial_\eta h_1)-\frac{\kappa ah_1^2}{P-\frac{1}{2}} h_1^2\partial_{\eta}^2\theta\Big],\\
&(u_1,\partial_\eta h_1)|_{\eta=0}=\mathbf{0},\quad \theta|_{\eta=0}=\theta^\ast(\tau,\xi),\quad\lim_{\eta\rightarrow +\infty}(u_1,\theta,h_1)(\tau,\xi,\eta)=(U,\Theta,H)(\tau,\xi),\\
&(u_1,\theta,h_1)|_{\tau=0}=
(u_{1,0},\theta_0,h_{1,0})(\xi,\eta),
\end{aligned}
\right.
\end{equation}
where we have removed all the superscripts 
for simplicity of notations. 
Unless explicitly specified during the rest of this section,  we will replaced $(\hat{u}_1,\hat{\theta},\hat{h}_1)$ by $(u_1,\theta,h_1)$ 
 without any confusion. 
\begin{remark}
The equations in $(\ref{lead-trans})$ are quasi-linear without loss of regularity, so the classical Picard iteration scheme can be used to establish the local existence. Then we can obtain the local well-posedness of solutions to the system \eqref{bd_lead-trans}-(\ref{lead-trans}) in the new coordinates $(\tau,\xi,\eta)$.
However, in order to guarantee the coordinates transformation to be valid, the assumption (\ref{inver-ass}) is required and there is a loss of regularity to transfer the well-posedness results of \eqref{lead-trans} to the ones of original system (\ref{simp-lead-noP})-\eqref{initial}.
\end{remark}

Further, we observe from the problem \eqref{lead-trans} that, it is more convenient to replace $h_1$ by a new unknown
\begin{align}\label{def-q}
 q=q(\tau,\xi,\eta):=h_1^2(\tau,\xi,\eta)/2.
\end{align}
Consequently, we rewrite the system (\ref{lead-trans}) for $\bm{v}=\bm{v}(\tau,\xi,\eta):=(u_1,\theta,q)^{\it T}(\tau,\xi,\eta)$ in the following  form:
\begin{equation}\label{V-sys}
\left\{
\begin{aligned}
&\partial_\tau \bm{v}+\bm{A}(\bm{v})\partial_\xi \bm{v}+\bm{f}(\bm{v},\partial_\eta \bm{v})+\bm{g}(\bm{v})=\bm{B}(\bm{v})\partial_{\eta}^2\bm{v},\\
&(u_1,\partial_\eta q)|_{\eta=0}=\mathbf{0},\quad \theta|_{\eta=0}=\theta^\ast(\tau,\xi),\\
&\lim_{\eta\rightarrow +\infty}\bm{v}(\tau,\xi,\eta)=(U,\Theta,H^2/2)^{\it T}(\tau,\xi)=: \bm{v}_\infty (\tau,\xi),\\
&\bm{v}|_{\tau=0}=\big(u_{1,0},\theta_0,(h_{1,0})^2/2\big)^{\it T}(\xi,\eta)=: \bm{v}_0(\xi,\eta),
\end{aligned}
\right.
\end{equation}
where 
\begin{align}
\displaystyle \bm{A}(\bm{v})&=\left(
\begin{array}{ccc}
u_1 & 0 & -\frac{R\theta}{P-q}\\
\frac{2a\theta q}{Q} & u_1 & 0\\
-\frac{2(P-q)q}{Q} & 0 & u_1\\
\end{array}
\right),~
\bm{B}(\bm{v})=2q\left(
\begin{array}{ccc}
\frac{\mu R\theta}{P-q} & 0 & 0\\
0 & \frac{\kappa a\theta(P+q)}{Q(P-q)} & -\frac{\nu a\theta}{Q}\\
0 & -\frac{2\kappa a q}{Q} &\frac{\nu(P-q)}{Q}\\
\end{array}
\right),\label{AB}
\end{align}
 $\bm{f}(\bm{v},\partial_\eta \bm{v})$ and $\bm{g}(\bm{v})$ are quadratic in relation to $\partial_\eta \bm{v}$ and $\bm{v}$ respectively: 
\begin{align}
\displaystyle \bm{f}(\bm{v},\partial_\eta \bm{v})&=\left(
\begin{array}{ccc}
(\nu-\frac{\mu R\theta}{P-q})\partial_\eta q\partial_\eta u_1\\
\nu\partial_\eta q\partial_\eta\theta
-\frac{a\theta(P+q)}{Q(P-q)}\big[2\mu q(\partial_\eta u_1)^2
+\kappa\partial_\eta q\partial_\eta\theta+\nu(\partial_\eta q)^2\big]\\
\frac{a}{Q}\big[4\mu q^2(\partial_\eta u_1)^2+2\kappa q\partial_\eta q\partial_\eta\theta+\frac{\nu(P+q)}{a}(\partial_\eta q)^2\big]
\end{array}
\right),\label{bF}\\
\displaystyle \bm{g}(\bm{v})&=\Big(\frac{RP_\xi \theta}{P-q},
-\frac{a(P_\tau +P_\xi u_1)\theta}{Q},
-\frac{2(1-a)(P_\tau +P_\xi u_1)q}{Q}\Big)^{\it{T}}.\label{bP}
\end{align}
Moreover, it is worth noting that the above vector functions $\bm{g}(\bm v)$ and $\bm{f}(\bm v,\partial_\eta \bm v)$ given in \eqref{bP} and \eqref{bF} respectively have the following forms:
\begin{align}\label{def_pf}
   \bm{f}(\bm v,\partial_\eta\bm v)=\bm{F}(\bm v,\partial_\eta \bm v)\partial_\eta\bm v,\quad \bm{g}(\bm v)=\bm{G}(\bm v)\bm v,
\end{align}
where the matrices:
\begin{align}\label{F}
\bm{F}(\bm v,\partial_\eta \bm v)&:=\left(
\begin{array}{ccc}
(\nu-\frac{\mu R\theta}{P-q})\partial_\eta q & 0 & 0\\
-\frac{2\mu a\theta q(P+q)}{Q(P-q)}\partial_\eta u_1 &
-\frac{\kappa a\theta(P+q)}{Q(P-q)}\partial_\eta q &
\nu\partial_\eta\theta-\frac{\nu a\theta(P+q)}{Q(P-q)}\partial_\eta q\\
\frac{4\mu a q^2}{Q} \partial_\eta u_1 & \frac{2\kappa aq}{Q} \partial_\eta q &
\frac{\nu(P+q)}{Q}\partial_\eta q
\end{array}
\right),
\end{align}
and 
\begin{align}\label{P}
\bm{G}(\bm v)&:=\left(
\begin{array}{ccc}
0 & \frac{R P_\xi}{P-q} & 0 \\
0 &
-\frac{a(P_\tau+P_\xi u_1)}{Q} &
0\\
0 & 0 &
-\frac{2(1-a)(P_\tau+P_\xi u_1)}{Q}
\end{array}
\right).
\end{align}
The above forms in \eqref{def_pf}, especially the chosen of $\bm F(\bm v,\partial_\eta \bm v)$, play a role in the constructions of approximate solutions to the system \eqref{V-sys} in the next subsection. 

\subsection{Well-posedness results}
In the following, we focus on the initial-boundary value problem \eqref{V-sys} for $\bm{v}$ and investigate its  well-posedness. First of all, we introduce some Sobolev spaces that will be used in the paper. Denote by
\[
\partial^\alpha=\partial^{\alpha_0}_\tau\partial^{\alpha_1}_\xi\partial^{\alpha_2}_\eta, ~\alpha=(\alpha_0, \alpha_1, \alpha_2)\in\mathbb{N}^3,
\]
and we introduce the space $\mathcal{H}^k(\Omega_T)$ for $k\in \mathbb{N}^+$:
\[
\mathcal{H}^k(\Omega_T):=\left\{f(\tau,\xi,\eta):\Omega_T\rightarrow \bR,\
\|f\|_{\mathcal{H}^k(\Omega_T)}:=\sup_{0\leq\tau\leq T}\|f(\tau)\|_{\mathcal{H}^k(\Omega)}<\infty\right\},
\]
where
\[
\|f(\tau)\|_{\mathcal{H}^k(\Omega)}:=\left(\sum_{|\alpha|\leq k}
\|\partial^{\alpha}f(\tau,\cdot)\|^2_{L^2(\Omega)}\right)^\frac{1}{2}.
\]
Also we denote by $H^k(\Omega)$ the Sobolev spaces of measurable function $f\in \Omega_T$ such that for any $\tau\in[0,T]$,
\[
\|f(\tau)\|_{H^k(\Omega)}:=\left(\sum_{\alpha_1+\alpha_2\leq k}
\|\partial^{\alpha_1}_\xi\partial^{\alpha_2}_\eta f(\tau,\cdot)\|^2_{L^2(\Omega)}\right)^\frac{1}{2}<\infty.
\]
Accordingly, we can then define the functions spaces $\mathcal{H}^k(\Omega_T)$ and $H^k(\Omega)$ for vector function $\bm{f}=(f_1,f_2,f_3),\ f_i:\Omega_T\rightarrow\bR,\ i=1,2,3$.

Next, we state the well-posedness result on the transformed problem (\ref{V-sys}) as follows.
\begin{thm}\label{classical-sol}
For the problem \eqref{V-sys} and the integer $k\geq 4$, assume that the known functions $U(\tau,\xi),\Theta(\tau,\xi),H(\tau,\xi), P(\tau,\xi)$ and the boundary value $\theta^\ast(\tau,\xi)$ satisfy $\Theta,H, P, \theta^\ast>0,$ and
\begin{align}\label{outer-bound}
\sup_{0\leq\tau\leq T}\sum_{j=0}^{2k}\|\partial^j_\tau(U,\Theta,H,P,\theta^\ast)(\tau,\cdot)\|_{H^{2k-j}(\bT_\xi)}\leq M_e
\end{align}
for some constant $M_e> 1$. 
Let the initial data $\bm{v}_0(\xi,\eta)=\big(u_{1,0},\theta_0,(h_{1,0})^2/2\big)^{\it T}(\xi,\eta)$
satisfy
\begin{equation}\label{initial-regularity}
\bm{v}_0(\xi,\eta)-\bm{v}_\infty(0,\xi) \in H^{3k}(\Omega),
\end{equation}
and the compatibility conditions up to $k$-th order. 
 Also, we assume there exists a sufficiently small constant $\delta>0$ such that
\begin{equation}\label{initial-lowbound}
\theta_0(\xi,\eta)\geq 2\delta,\quad 2\delta\leq \frac{1}{2}(h_{1,0})^2(\xi,\eta)\leq P(0,\xi)-2\delta, \quad \forall (\xi,\eta)\in\Omega.
\end{equation}
Then there exists a $T_\ast\in(0,T]$ such that the problem (\ref{V-sys}) admits a unique classical solution $\bm{v}(\tau,\xi,\eta)=(u_1,\theta,q)^{\it T}(\tau,\xi,\eta)$ satisfying
$\bm{v}(\tau,\xi,\eta)-\bm{v}_\infty(\tau,\xi) \in \mathcal{H}^k(\Omega_{T_\ast})$. Moreover, 
it holds that
\[
\theta(\tau,\xi,\eta)\geq\delta,\quad \delta\leq q(\tau,\xi,\eta)\leq P(\tau,\xi)-\delta,
\quad \forall (\tau,\xi,\eta)\in\Omega_{T^\ast}.
\]
\end{thm}
\begin{remark}
The regularity assumptions on the outflow $(U,\Theta,H,P)$, the boundary value $\theta_\ast$ and the initial data $\bm{v}_0$ are not optimal. Here, we require high regularity  to simplify the construction of approximate solutions as shown in the following part.
\end{remark}
Recalling the relation \eqref{def-q}, let $\bm{v}(\tau,\xi,\eta)=(u_1,\theta,q)^{\it T}(\tau,\xi,\eta)$ be the classical solution to \eqref{V-sys} given in the above Theorem \ref{classical-sol}, we immediately get that
\[(u_1,\theta,h_1)^{\it T}(\tau,\xi,\eta):=\left(u_1,\theta,\sqrt{2q}\right)^{\it T}(\tau,\xi,\eta)\]
is the classical solution to the problem \eqref{lead-trans}. Consequently, we give the following corollary without proof.
\begin{coro}\label{coro}
	Suppose that in the problem \eqref{lead-trans}, the known functions $U(\tau,\xi),\Theta(\tau,\xi),H(\tau,\xi),$ $P(\tau,\xi)$ and the boundary value $\theta^\ast(\tau,\xi)$  satisfy the same hypotheses as given in Theorem \ref{classical-sol}. 
Let the initial data $(u_{1,0},\theta_0,h_{1,0})^{\it T}(\xi,\eta)$ of the problem (\ref{lead-trans}) 
	satisfy
	\begin{equation*}
	\Big(u_{1,0}(\xi,\eta)-U(0,\xi),\theta_0(\xi,\eta)-\Theta(0,\xi), h_{1,0}(\xi,\eta)-H(0,\xi)\Big)
	\in H^{3k}(\Omega),
	\end{equation*}
	and the compatibility conditions up to $k$-th order. 
	Also, we assume there exists a sufficiently small constant $\delta>0$ such that
	\begin{equation*}
	\theta_0(\xi,\eta), h_{1,0}(\xi,\eta)\geq 2\delta,\quad 
\frac{1}{2}h_{1,0}^2(\xi,\eta) \leq P(0,\xi)-2\delta, \quad \forall (\xi,\eta)\in\Omega.
	\end{equation*}
	Then the problem (\ref{lead-trans}) admits a unique classical solution $(u_1,\theta,h_1)^{\it T}(\tau,\xi,\eta)$ in $[0,T_\ast]$ with $T_\ast$ given in Theorem \ref{classical-sol}. 
Furthermore, it holds that
\[
\theta(\tau,\xi,\eta), h_1(\tau,\xi,\eta)\geq\delta,\quad 
\frac{1}{2}h_1^2(\tau,\xi,\eta)\leq P(\tau,\xi)-\delta,\quad \forall(\tau,\xi,\eta)\in\Omega_{T^\ast}.
\]
\end{coro}

\subsection{Preliminary}
In this subsection, we will make some preliminaries. For the problem \eqref{V-sys},
recall that the facts $\rho, \theta, h_1>0$ implies $P-q>0$ and $Q=P+(1-2a)q>0$ as $0<a<1$. 
We introduce a positive symmetric matrix 
\begin{equation}\label{S}
\displaystyle \bm{S}(\bm{v}):=\left(
\begin{array}{ccc}
\frac{\theta(P-q)}{R} & 0 & 0\\
0 & \frac{P-q}{a} & \theta\\
 0 & \theta & \frac{\theta^2(P+q)}{2q(P-q)}\\
\end{array}
\right),
\end{equation}
such that
\begin{equation}\label{SA}
[\bm{SA}](\bm{v}):=\bm{S}(\bm{v})\bm{A}(\bm{v})=\left(
\begin{array}{ccc}
\frac{\theta (P-q)}{R}u_1 & 0 & -\theta^2\\
0 & \frac{P-q}{a}u_1 & \theta u_1\\
-\theta^2 & \theta u_1 & \frac{\theta^2(P+q)}{2q(P-q)}u_1\\
\end{array}
\right)
\end{equation}
is symmetric, and
\begin{equation}\label{SB}
[\bm{SB}](\bm v):=\bm{S}(\bm{v})\bm{B}(\bm{v})=\left(
\begin{array}{ccc}
2\mu\theta^2 q & 0 & 0\\
0 & 2\kappa\theta q & 0\\
0 & 0 &\nu\theta^2\\
\end{array}
\right)
\end{equation}
is positive definite.

Denote by
\begin{align}\label{initial-j}
  \bm{v}_0^j(\xi,\eta)~:=~\partial_\tau^j\bm{v}(0,\xi,\eta),\quad 0\leq j\leq k.
\end{align}
It follows from the compatibility conditions for \eqref{V-sys} that $\bm{v}_0^j$ can be derived from the equations and initial data of the problem (\ref{V-sys}) by induction with respect to $j$. Precisely, it can be expressed as polynomials of the spatial derivatives, up to order $2j$, of the initial data $\bm{v}_0(\xi,\eta)$. Also, \eqref{initial-regularity} yields that there exists a positive constant $M_0>1$ depending on $\|\bm{v}_0(\xi,\eta)-\bm{v}_\infty(0,\xi)\|_{H^{3k}(\Omega)}$ such that
\begin{align}\label{regular-t}
  \sum_{j=0}^{k}\big\|\bm{v}_0^j(\xi,\eta)-\partial_\tau^j \bm{v}_\infty(0,\xi)\big\|_{H^{3k-2j}(\Omega)}\leq M_0.
\end{align}

Next, in order to overcome the technical difficulty originated from the boundary terms at $\{\eta=0\}$ and $\{\eta=\infty\}$ for the problem \eqref{V-sys}, we introduce an auxiliary function $\phi=\phi(\eta)\in C^\infty(\bR_+)$ such that $0\leq\phi\leq1,$
\begin{equation*}\label{auxiliary}
\phi(\eta)\equiv0\quad \mbox{for}\quad \eta\in[0,1],\qquad\phi(\eta)\equiv1\quad\mbox{for}\quad\eta\geq2.
\end{equation*}
Accordingly, define
\begin{equation}\label{barV}
\bar{\bm v}=\bar {\bm v}(\tau,\xi,\eta):=\Big(\phi U(\tau,\xi),\phi\Theta(\tau,\xi)
+(1-\phi)\theta^\ast(\tau,\xi),H^2(\tau,\xi)/2\Big)^{\it{T}},
\end{equation}
and we use $\bar{\bm v}$ to ensure the homogeneous boundary conditions that will be shown in the sequel. Indeed, let 
\[
\bm{v}-\bar{\bm v}=(\bm v-\bar{\bm v})(\tau,\xi,\eta):=(v_1,\vartheta,w)^{\it{T}}(\tau,\xi,\eta)
\]
with $\bm v$ being the solution to problem \eqref{V-sys}, it then implies
\[
(v_1,\vartheta,\partial_\eta w)|_{\eta=0}=\mathbf{0},\quad
\lim_{\eta\rightarrow +\infty}(v_1,\vartheta,w)(\tau,\xi,\eta)=\mathbf{0}.
\]
Furthermore, it is easy to obtain that by \eqref{outer-bound},
\begin{align}\label{est-V}
\sup_{0\leq\tau\leq T}\sum_{\alpha_0+\alpha_1\leq 2k}\big\|\partial^\alpha \bar{\bm v}(\tau,\cdot)\big\|_{L^{2}(\bT_\xi, L^\infty(\mathbb{R}_\eta^+))}+  \big\|\bar {\bm v}-{\bm v}_\infty\big\|_{\mathcal{H}^{2k}(\Omega_T)}\leq M_1,
\end{align}
and combining \eqref{regular-t} with \eqref{est-V} yields
\begin{align}\label{est_V}
  \sum_{j=0}^{k}\big\|{\bm v}_0^j(\xi,\eta)-\partial_\tau^j \bar{\bm v}(0,\xi,\eta)\big\|_{H^{3k-2j}(\Omega)}\leq M_1,
\end{align}
for some constant $M_1=M_1(M_e, M_0)>1.$

Denote by the notation $[\cdot, \cdot]$ the commutator. We use the notation $A\lesssim B$ to mean that $|A|\leq C |B|$ with a generic constant $C>0$. We also use the following notation for  inner product of vector functions $\bm{f}$ and $\bm{g}:$
\[
\left\langle \bm{f},\bm{g}\right\rangle(\tau)=\left\langle \bm{f}(\tau,\cdot), \bm{g}(\tau,\cdot)\right\rangle_\Omega
:=\sum_{i=1}^3\int_\Omega (f_i g_i)(\tau,\xi,\eta)d\xi d\eta.
\]

We shall use repeatedly the following inequalities, and one can refer to Lemma 2.1 in \cite{liu2016mhdboundarylayer} for the proof.
\begin{lemm}
For proper functions $f, g$ in $\Omega_T$, the following properties hold.

i)  If $\displaystyle \lim\limits_{\eta\rightarrow+\infty}(fg)(\tau,\xi,\eta)=0$, it holds that for $\tau\in[0,T]$,
\begin{equation*}
\left|\int_{\bT}(fg)(\tau,\xi,0) d\xi\right|\leq\|\partial_\eta f(\tau,\cdot)\|_{L^2(\Omega)}\|g(\tau,\cdot)\|_{L^2(\Omega)}+\|\partial_\eta g(\tau,\cdot)\|_{L^2(\Omega)}\|f(\tau,\cdot)\|_{L^2(\Omega)}.
\end{equation*}
In particular, 
\begin{equation}\label{trace}
\|f(\tau,\xi,0)\|_{L^2(\bT_\xi)}\leq \sqrt 2\|f(\tau,\cdot)\|^\frac{1}{2}_{L^2(\Omega)}\|\partial_\eta f(\tau,\cdot)\|^\frac{1}{2}_{L^2(\Omega)},\quad\forall f(\tau,\cdot)\in H^1(\Omega) .
\end{equation}

ii) For $f, g\in \mathcal{H}^k(\Omega_T)$ with some integer $k\geq 4$, it holds that for $|\alpha|+|\beta|\leq k$ and $t\leq T,$
\begin{equation}\label{moser0}
\big\|(\partial^\alpha f\partial^\beta g)(\tau,\cdot) \big\|_{L^2(\Omega)}\lesssim\|f(\tau)\|_{\mathcal{H}^k(\Omega)}\cdot\|g(\tau)\|_{\mathcal{H}^k(\Omega)}.
\end{equation}
Then, it implies that for $|\alpha|\leq k$,
\begin{align}\label{moser1}
\begin{aligned}
&\big\|\partial^\alpha(fg)(\tau,\cdot)\big\|_{L^2(\Omega)}\lesssim \|f(\tau)\|_{\mathcal{H}^k(\Omega)}\cdot\|g(\tau)\|_{\mathcal{H}^k(\Omega)},
\end{aligned}\end{align}
and 
\begin{align}\label{moser2}
\begin{aligned}
&\big\|\big([\partial^\alpha, f]~g\big)(\tau,\cdot)\big\|_{L^2(\Omega)} \lesssim \|f(\tau)\|_{\mathcal{H}^k(\Omega)}\cdot\|g(\tau)\|_{\mathcal{H}^{k-1}(\Omega)}.
\end{aligned}\end{align}

\end{lemm}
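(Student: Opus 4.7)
The plan is to prove parts (i) and (ii) as standard boundary-trace and Moser-type inequalities adapted to the mixed space-time Sobolev setting. For part (i), I would start from the fundamental theorem of calculus in the normal variable: using the decay hypothesis $\lim_{\eta\to+\infty}(fg)(\tau,\xi,\eta) = 0$, one has
\[
(fg)(\tau,\xi,0) = -\int_0^\infty \partial_\eta(fg)\,d\eta = -\int_0^\infty \bigl(f\,\partial_\eta g + g\,\partial_\eta f\bigr)\,d\eta.
\]
Integrating in $\xi\in\bT$ and applying Cauchy--Schwarz on $\Omega=\bT\times\bR_+$ delivers the first bound. For the special inequality \eqref{trace}, apply the same identity with $g=f$: for a.e.\ $\xi$, the slice $f(\tau,\xi,\cdot)\in H^1(\bR_+)$ tends to zero at infinity, so $f^2(\tau,\xi,0)=-2\int_0^\infty f\,\partial_\eta f\,d\eta$; integrating in $\xi$ and using Cauchy--Schwarz give $\|f(\tau,\cdot,0)\|_{L^2(\bT_\xi)}^2\leq 2\|f(\tau)\|_{L^2(\Omega)}\|\partial_\eta f(\tau)\|_{L^2(\Omega)}$, whence the $\sqrt{2}$ constant.

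For part (ii), I would work at fixed $\tau\in[0,T]$, viewing any time derivatives that fall on $f$ or $g$ as new functions on the two-dimensional domain $\Omega$. Since $k\geq 4$, the 2D Sobolev embedding $H^2(\Omega)\hookrightarrow L^\infty(\Omega)$ is available, giving $\|f(\tau,\cdot)\|_{L^\infty(\Omega)}\lesssim \|f(\tau)\|_{\mathcal{H}^k(\Omega)}$ and likewise for $g$. To establish \eqref{moser0}: if $|\alpha|=0$ (respectively $|\beta|=0$) one pulls out an $L^\infty$ factor and dominates it by the $\mathcal{H}^k$ norm via this embedding. When $|\alpha|,|\beta|\geq 1$ and $|\alpha|+|\beta|\leq k$, invoke the Gagliardo--Nirenberg interpolation
\[
\|\partial^\gamma h\|_{L^{2k/|\gamma|}(\Omega)}\lesssim \|h\|_{L^\infty(\Omega)}^{1-|\gamma|/k}\|h\|_{H^k(\Omega)}^{|\gamma|/k}
\]
on each factor (applied to the spatial derivatives after peeling off the time derivatives), then H\"older with conjugate exponents $2k/|\alpha|$ and $2k/|\beta|$ in the critical case $|\alpha|+|\beta|=k$ (or subcritical exponents otherwise), followed by Young's inequality, yielding the product bound in terms of $\|f(\tau)\|_{\mathcal{H}^k}\|g(\tau)\|_{\mathcal{H}^k}$.

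The remaining two estimates are purely algebraic consequences of \eqref{moser0}. For \eqref{moser1}, expand via the Leibniz rule
\[
\partial^\alpha(fg) = \sum_{\beta\leq\alpha}\binom{\alpha}{\beta}\partial^\beta f\,\partial^{\alpha-\beta}g,
\]
and note $|\beta|+|\alpha-\beta|=|\alpha|\leq k$, so \eqref{moser0} controls every summand. For the commutator \eqref{moser2},
\[
[\partial^\alpha, f]g = \sum_{0<\beta\leq\alpha}\binom{\alpha}{\beta}\partial^\beta f\,\partial^{\alpha-\beta}g;
\]
the restriction $\beta\neq 0$ forces $|\alpha-\beta|\leq k-1$ in each term, so \eqref{moser0} (with adjusted exponents when needed) yields the required bound with one fewer derivative on $g$. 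The only real care is the bookkeeping when distributing time and space derivatives while invoking Gagliardo--Nirenberg at fixed $\tau$; this poses no genuine obstacle because $\mathcal{H}^k(\Omega_T)$ dominates every mixed derivative of total order at most $k$ uniformly in $\tau$.
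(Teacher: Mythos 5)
Your part (i) is exactly the standard argument (fundamental theorem of calculus in $\eta$ plus Cauchy--Schwarz on $\Omega$, then $g=f$ for \eqref{trace}), and it is correct; note the paper itself does not prove the lemma but refers to Lemma 2.1 of \cite{liu2016mhdboundarylayer}, whose proof of (i) is the same.

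For part (ii) your overall strategy (Leibniz expansion, reduce everything to \eqref{moser0}) is sound, and your derivations of \eqref{moser1} and \eqref{moser2} from \eqref{moser0} are fine, but the Gagliardo--Nirenberg step you invoke for \eqref{moser0} does not apply verbatim here, because $\partial^\alpha=\partial_\tau^{\alpha_0}\partial_\xi^{\alpha_1}\partial_\eta^{\alpha_2}$ contains time derivatives while all norms are taken on the two-dimensional slice $\Omega$ at fixed $\tau$. After ``peeling off'' the time derivatives you are left with $h=\partial_\tau^{\alpha_0}f$, and when $\alpha_0$ is large (e.g.\ $\alpha=(k-1,0,0)$, $\beta=(0,1,0)$) neither $\|h\|_{L^\infty(\Omega)}$ nor $\|h\|_{H^k(\Omega)}$ is controlled by $\|f(\tau)\|_{\mathcal{H}^k(\Omega)}$, so the quoted interpolation $\|\partial^\gamma h\|_{L^{2k/|\gamma|}}\lesssim\|h\|_{L^\infty}^{1-|\gamma|/k}\|h\|_{H^k}^{|\gamma|/k}$ cannot be used as written; dismissing this as ``bookkeeping'' leaves a real hole in precisely those terms. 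The hypothesis $k\geq4$ is there so that no interpolation is needed at all: since $|\alpha|+|\beta|\leq k$, at least one index, say $|\alpha|$, satisfies $|\alpha|\leq k/2\leq k-2$, hence $\partial^\alpha f(\tau,\cdot)$ has two spare \emph{spatial} derivatives inside the $\mathcal{H}^k$ norm and the two-dimensional embedding $H^2(\Omega)\hookrightarrow L^\infty(\Omega)$ gives $\|\partial^\alpha f(\tau,\cdot)\|_{L^\infty(\Omega)}\lesssim\|f(\tau)\|_{\mathcal{H}^k(\Omega)}$, after which $\|\partial^\alpha f\,\partial^\beta g\|_{L^2}\leq\|\partial^\alpha f\|_{L^\infty}\|\partial^\beta g\|_{L^2}$ finishes \eqref{moser0}; the same splitting (lower-order factor in $L^\infty$, higher-order factor in $L^2$) also yields \eqref{moser2} with only $\mathcal{H}^{k-1}$ on $g$, since $|\beta|\geq1$ in every commutator term. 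If you replace your GN/H\"older case analysis by this $L^\infty$--$L^2$ splitting (or at least by $L^2$--$H^1$ Ladyzhenskaya-type interpolation in the high-time-derivative cases), the proof is complete and matches the cited one.
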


\subsection{The iteration scheme}
We are going to use the classical iteration scheme to prove the local existence result in Theorem~\ref{classical-sol}. To this end, we  construct a sequence of approximate solutions $\{\bm{v}^n\}_{n\geq 0}$ as follows.

\indent\newline
\underline{\textit{The zero-th order approximate solution:}} 
We want to construct the zero-th order approximate solution $\bm{v}^0(\tau,\xi,\eta)$ of problem \eqref{V-sys}, such that $\bm{v}^0=(u_1^0,\theta^0, q^0)^{\it{T}}$ satisfies the boundary conditions of \eqref{V-sys}, i.e.,
\begin{align*}
  (u_1^0, \partial_\eta q^0)|_{\eta=0}=\mathbf{0},\quad \theta^0|_{\eta=0}=\theta^{\ast}(\tau,\xi),\qquad \lim\limits_{\eta\rightarrow+\infty}\bm{v}^0(\tau,\xi,\eta)=\bm{v}_\infty(\tau,\xi),
\end{align*}
and the compatibility conditions
\begin{align*}
  \partial_\tau^j {\bm v}^0|_{\tau=0}=\bm{v}_0^j(\xi,\eta),\quad 0\leq j\leq k.
\end{align*}

With the help of \eqref{initial-j} and \eqref{barV} the zero-th order approximate solution ${\bm v}^0(\tau,\xi,\eta)$ of \eqref{V-sys} can be chosen as follows:
\begin{equation}\label{appro}
{\bm v}^0(\tau,\xi,\eta)=\bar{\bm v}(\tau,\xi,\eta)+\sum_{j=0}^{k}\frac{\tau^j}{j!}\Big({\bm v}_0^j(\xi,\eta)-\partial_\tau^j\bar{\bm v}(0,\xi,\eta)\Big).
\end{equation}
Then for this approximation $\bm v^0$, we have the following proposition.
\begin{prop}\label{zero-appro}
The zeroth-order approximate solution $\bm v^0(\tau,\xi,\eta)=(u_1^0,\theta^0,q^0)^{\it T}(\tau,\xi,\eta)$ of the problem (\ref{V-sys}), defined by \eqref{appro}, satisfies
\begin{align}\label{est-V0}
  \bm v^0(\tau,\xi,\eta)-\bar{\bm v}(\tau,\xi,\eta)\in\mathcal{H}^k(\Omega_{T}),
\end{align}
\begin{equation}\label{appro-init}
\partial^j_\tau \bm v^0|_{\tau=0}=\bm v^j_0 (\xi,\eta),\quad 0\leq j\leq k,
\end{equation}
and the boundary conditions
\begin{equation}\label{appro-bou}
(u_1^0,\partial_\eta q^0)|_{\eta=0}=\mathbf{0},\quad \theta^0|_{\eta=0}=\theta^\ast(\tau,\xi),\quad \lim_{\eta\rightarrow +\infty}\bm v^0(\tau,\xi,\eta)=\bm v_\infty(\tau,\xi).
\end{equation}
Moreover, 
there exists some $T_0\in(0,T]$ such that for $(\tau,\xi,\zeta)\in\Omega_{T_0},$ 
\begin{equation}\label{appro-posi}
\theta^0(\tau,\xi,\zeta)\geq \delta,\quad \delta\leq q^0(\tau,\xi,\zeta)\leq P(\tau,\xi)-\delta
\end{equation}
with $\delta>0$ given in (\ref{initial-lowbound}).
\end{prop}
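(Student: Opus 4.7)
The plan is to verify each of the four assertions of Proposition~\ref{zero-appro} in turn, exploiting the explicit Taylor-type construction \eqref{appro} together with the bounds \eqref{regular-t}, \eqref{est-V}, and \eqref{est_V} that are already available.

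For the regularity claim \eqref{est-V0}, I would observe that $\bm v^0-\bar{\bm v}$ is the finite polynomial $\sum_{j=0}^{k}\tfrac{\tau^j}{j!}(\bm v_0^j-\partial_\tau^j\bar{\bm v}(0,\cdot))$. For any multi-index $\alpha$ with $|\alpha|\le k$, applying $\partial^\alpha$ to the $j$-th summand produces, up to a numerical constant, $\tau^{\max(j-\alpha_0,0)}$ times a spatial derivative of order $\alpha_1+\alpha_2$ of $\bm v_0^j-\partial_\tau^j\bar{\bm v}(0,\cdot)$. Since $\alpha_1+\alpha_2\le k-\alpha_0\le k\le 3k-2j$ for all $j\le k$, estimate \eqref{est_V} combined with $|\tau|\le T$ yields a uniform $L^2(\Omega)$ bound and therefore $\bm v^0-\bar{\bm v}\in\mathcal{H}^k(\Omega_T)$. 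The compatibility property \eqref{appro-init} follows at once from the identity $\partial_\tau^j[\tau^i/i!]\big|_{\tau=0}=\delta_{ij}$, which collapses the sum in \eqref{appro} at $\tau=0$ to a single term and cancels the contribution of $\partial_\tau^j\bar{\bm v}(0,\cdot)$.

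Third, for the boundary conditions \eqref{appro-bou}, I would check that every summand in \eqref{appro} already satisfies them componentwise. At $\eta=0$, the choice $\phi(0)=0$ gives $\bar u_1(\tau,\xi,0)=0$ and $\bar\theta(\tau,\xi,0)=\theta^\ast(\tau,\xi)$, while $\partial_\eta\bar q(\tau,\xi,0)=0$ because $\bar q=H^2/2$ is $\eta$-independent. The $k$-th order compatibility of the initial data, together with the definition \eqref{initial-j}, forces $(\bm v_0^j)_1|_{\eta=0}=0$, $(\bm v_0^j)_2|_{\eta=0}=\partial_\tau^j\theta^\ast(0,\xi)$ and $\partial_\eta(\bm v_0^j)_3|_{\eta=0}=0$; the corresponding traces of $\partial_\tau^j\bar{\bm v}(0,\xi,0)$ match these values exactly, so each correction term contributes zero in the relevant components. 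For the far-field condition, $\phi(\eta)\to 1$ as $\eta\to\infty$ gives $\bar{\bm v}(\tau,\xi,\eta)\to\bm v_\infty(\tau,\xi)$, and $(\bm v_0^j-\partial_\tau^j\bar{\bm v}(0,\cdot))(\xi,\eta)\to \partial_\tau^j\bm v_\infty(0,\xi)-\partial_\tau^j\bm v_\infty(0,\xi)=0$ as $\eta\to\infty$.

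Finally, the positivity bounds \eqref{appro-posi} would follow by a short-time continuity argument. At $\tau=0$ assumption \eqref{initial-lowbound} together with \eqref{appro-init} gives $\theta^0\ge 2\delta$ and $2\delta\le q^0\le P(0,\xi)-2\delta$. The regularity shown in the first step and the Sobolev embedding $\mathcal H^k(\Omega_T)\hookrightarrow L^\infty(\Omega_T)$, valid for $k\ge 4$, make $\theta^0$, $q^0$ and $P$ uniformly continuous in $\tau$ with values in $L^\infty_{\xi,\eta}$. Choosing $T_0\in(0,T]$ so small that
\[
\sup_{\tau\in[0,T_0]}\bigl\|(\theta^0,q^0)(\tau,\cdot)-(\theta^0,q^0)(0,\cdot)\bigr\|_{L^\infty}+\sup_{\tau\in[0,T_0]}\|P(\tau,\cdot)-P(0,\cdot)\|_{L^\infty}\le \delta
\]
immediately yields \eqref{appro-posi}. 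The only delicate point in the whole proof is the index bookkeeping in the regularity step, namely checking that the available $H^{3k-2j}$-norm of $\bm v_0^j-\partial_\tau^j\bar{\bm v}(0,\cdot)$ is large enough to absorb all $k$ derivatives applied to the degree-$k$ Taylor polynomial; since $3k-2j\ge k$ for $0\le j\le k$, this is immediate, and all remaining assertions reduce to direct consequences of the construction and the assumed compatibility of the initial data.
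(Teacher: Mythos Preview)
Your proof is correct and follows essentially the same approach as the paper: both verify \eqref{est-V0} by estimating the Taylor polynomial via \eqref{est_V}, check \eqref{appro-init} and \eqref{appro-bou} directly from the construction and the compatibility conditions, and obtain \eqref{appro-posi} by a short-time perturbation argument off the initial bounds \eqref{initial-lowbound}. The only cosmetic difference is that the paper writes out the expansion of $\theta^0$ explicitly and bounds each increment term by term, whereas you invoke the $\mathcal{H}^k$-regularity already established together with Sobolev embedding to get uniform continuity in $\tau$; both routes give the same $T_0$ up to constants.
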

\begin{proof}
From the definition \eqref{appro} of $\bm v^0$, direct calculation shows that  $\bm v^0$ satisfies (\ref{appro-init}) and the boundary conditions (\ref{appro-bou}) by virtue of \eqref{barV} and the compatibility conditions.
Combining \eqref{appro} with \eqref{est_V}, one can obtain that 
\begin{align}\label{est_V0}
\|\bm v^0-\bar{\bm v}\|_{\mathcal{H}^k(\Omega_T)}^2\leq&\sum^k_{j=0}
\left\|\frac{\tau^j}{j!}\big(\bm {\bm v}_0^j(\xi,\eta)-\partial^j_\tau\bar{\bm v}(0,\xi,\eta)\big)\right\|_{\mathcal{H}^k(\Omega_T)}^2
\lesssim (1+T^k)M_1
,
\end{align}
which implies \eqref{est-V0} immediately.

Next, from \eqref{appro} it holds that
\begin{align}\label{positive-theta0}
\begin{aligned}
\theta^0(\tau,\xi,\eta)&=\theta_0(\xi,\eta)+\phi(\eta)\big(\Theta(\tau,\xi)-\Theta(0,\xi)\big)+(1-\phi(\eta))\big(\theta^\ast(\tau,\xi)-\theta^\ast(0,\xi)\big)\\
&\quad+\sum^k_{j=1}\frac{\tau^j}{j!}\partial^j_\tau\big((\theta-\Theta)
+(1-\phi)(\Theta-\theta^\ast)\big)(0,\xi,\eta)\\
&\geq \theta_0(\xi,\eta)-\tau\sup_{0\leq\tau\leq T}\|\partial_\tau(\Theta,\theta^\ast)(\tau,\cdot)\|_{L^\infty(\bT_\xi)}\\
&\quad-\sum^k_{j=1}\frac{\tau^j}{j!}\Big(\|\partial^j_\tau(\theta-\Theta)|_{\tau=0}\|_{L^\infty(\Omega)}
+\|\partial^j_\tau(\Theta-\theta^\ast)(0,\xi)\|_{L^\infty(\bT_\xi)}\Big),
	\end{aligned}
\end{align}
where we have used the fact that
\[
|f(\tau,\xi)-f(0,\xi)|\leq\tau\sup_{0\leq\tau\leq T}\|\partial_\tau f(\tau,\cdot)\|_{L^\infty(\bT_\xi)},\quad
\forall ~(\tau,\xi)\in[0,T]\times\bT.
\]
Then, applying the Sobolev embedding inequalities to \eqref{positive-theta0} and using \eqref{regular-t} and \eqref{est-V}, it yields that for any $\tau\in[0,T],$
\begin{align*}
\theta^0(\tau,\xi,\eta)
&\geq \theta_0(\xi,\eta) -C\tau(1+\tau^{k-1})\sum^k_{j=1}\Big(\|\partial^j_\tau(\theta-\Theta)|_{\tau=0}\|_{H^2(\Omega)}
+\sup_{0\leq\tau\leq T_0}\|\partial^j_\tau(\Theta,\theta^\ast)\|_{H^1(\bT_\xi)}\Big)\nonumber\\
&\geq \theta_0(\xi,\eta)-C\tau(1+T^{k-1})(\sqrt{M_0}+\sqrt{M_e}).
\end{align*}
Similarly, one can obtain
\begin{align*}
q^0(\tau,\xi,\eta)&\geq \frac{1}{2}(h_{1,0})^2(\xi,\eta) -C\tau(1+T^{k-1})(\sqrt{M_0}+\sqrt{M_e}),
\end{align*}
and
\begin{align*}
P(\tau,\xi)-q^0(\tau,\xi,\eta)&\geq P(0,\xi)-\frac{1}{2}(h_{1,0})^2(\xi,\eta) -C\tau(1+T^{k-1})(\sqrt{M_0}+\sqrt{M_e}).
\end{align*}
Thus, recalling the assumption (\ref{initial-lowbound}) and choosing 
\begin{align*}
T_0=\min\{1,\tilde{T}_0\}\quad\mbox{with}\quad\tilde{T}_0=\frac{\delta}{C(1+T^{k-1})(\sqrt{M_0}+\sqrt{M_e})},
\end{align*}
the statement 
(\ref{appro-posi}) follows immediately.
\end{proof}

\smallskip
\indent\newline
\underline{\textit{The $n-$th order approximation:}}
Assume that in the region $\Omega_{T_1},\ T_1\in(0,T_0]$ with $T_0$ given in Proposition \ref{zero-appro} and $T_1$ to be determined later,
the $(n-1)$-th order approximate solution $\bm {\bm v}^{n-1}(\tau,\xi,\eta)=(u_1^{n-1},\theta^{n-1},q^{n-1})^{\it{T}}(\tau,\xi,\eta), n\geq 1$ to the problem (\ref{V-sys}) has been constructed. Also,  ${\bm v}^{n-1}$ satisfies
\begin{align}\label{est_n-1}
\bm {\bm v}^{n-1}(\tau,\xi,\eta)-\bar{\bm v}(\tau,\xi,\eta)\in \mathcal{H}^k(\Omega_{T_1}),\quad k\geq 4,
\end{align}
  and  the following induction hypotheses
\begin{equation}\label{induc-hypo}
\left\{\begin{aligned}
&\partial^j_\tau \bm {\bm v}^{n-1}|_{\tau=0}=\bm {\bm v}_0^j(\xi,\eta),\quad 0\leq j\leq k,\\
&(u_1^{n-1},\partial_\eta q^{n-1})|_{\eta=0}=\mathbf{0},\quad \theta^{n-1}|_{\eta=0}=\theta^\ast(\tau,\xi),\quad
\lim_{\eta\rightarrow +\infty}\bm {\bm v}^{n-1}(\tau,\xi,\eta)=\bm v_\infty(\tau,\xi),\\
&\theta^{n-1}(\tau,\xi,\eta)\geq\delta,\quad \delta\leq q^{n-1}(\tau,\xi,\eta)\leq P(\tau,\xi)-\delta.
\end{aligned}
\right.
\end{equation}
Now, we are going to construct the $n$-th order approximate solution
$$\bm {\bm v}^n=\bm {\bm v}^n(\tau,\xi,\eta):=(u_1^n,\theta^n,q^n)^{\it T}(\tau,\xi,\eta)$$
 to the problem (\ref{V-sys}) that satisfies the corresponding conditions to \eqref{est_n-1}- \eqref{induc-hypo}. 
Actually, we construct 
$\bm {\bm v}^n=(u_1^n,\theta^n, q^n)^{\it T}$ 
by solving the following linear initial-boundary value problem in $\Omega_{T_1}$,
\begin{equation}\label{picard}
\left\{
\begin{aligned}
&\partial_\tau \bm {\bm v}^n +\bm{A}(\bm {\bm v}^{n-1})\partial_\xi \bm {\bm v}^n
+\bm{F}(\bm {\bm v}^{n-1},\partial_\eta \bm {\bm v}^{n-1})\partial_\eta \bm {\bm v}^n+\bm{G}(\bm {\bm v}^{n-1})\bm {\bm v}^n=\bm{B}(\bm {\bm v}^{n-1})\partial_{\eta}^2\bm {\bm v}^n,\\
&(u_1^n,\partial_\eta q^n)|_{\eta=0}=\mathbf{0},\quad \theta^n|_{\eta=0}=\theta^\ast(\tau,\xi),\quad
\lim_{\eta\rightarrow +\infty}\bm {\bm v}^n(\tau,\xi,\eta)=\bm v_\infty(\tau,\xi),\\
&\bm {\bm v}^n|_{\tau=0}=\bm {\bm v}_0(\xi,\eta),
\end{aligned}
\right.
\end{equation}
where $ \bm{A}(\cdot), \bm{B}(\cdot),$ are defined in (\ref{AB}), and $ \bm{G}(\cdot), \bm{F}(\cdot,\cdot)$ are defined in (\ref{P}), \eqref{F} respectively.

Firstly, we will show the solvability of the above problem \eqref{picard}.
\begin{lemm}\label{energy-est}
Under the assumptions of Theorem \ref{classical-sol}, the problem \eqref{picard} admits a unique classical solution $\bm v^n(\tau,\xi,\eta)$ in $\Omega_{T_1}$ satisfying
\begin{align*}
	\bm {\bm v}^{n}(\tau,\xi,\eta)-\bar{\bm v}(\tau,\xi,\eta)\in \mathcal{H}^k(\Omega_{T_1}),\quad k\geq 4,
\end{align*}
and 
\begin{align}\label{n-initial}
  \partial^j_\tau \bm v^{n}|_{\tau=0}=\bm {\bm v}_0^j(\xi,\eta),\quad 0\leq j\leq k.
\end{align}
Moreover, there exists a positive constant $C_0$, depending on $T_0, M_e, \delta,$ 
such that 
the following estimate holds:
\begin{align}\label{gronwall-general}
\begin{aligned}
&\|\bm {\bm v}^n-\bar{\bm v}\|^2_{\mathcal{H}^k(\Omega_t)}
+\int^t_0\big\|\partial_\eta( \bm {\bm v}^n-\bar{\bm v})(\tau)\big\|^2_{\mathcal{H}^k(\Omega)}d\tau\\
&\leq C_0M_1^3 ~\exp\left\{C_0\int_{0}^{t}\big\|(\bm {\bm v}^{n-1}-\bar{\bm v})(\tau)\big\|_{\mathcal{H}^k(\Omega)}^{18}d\tau\right\},
\quad\forall t\in[0,T_1],
\end{aligned}\end{align}
where the constant $M_1\geq 1$ is given in \eqref{est_V}.
\end{lemm}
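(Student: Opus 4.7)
I would first homogenize the boundary conditions by setting $\tilde{\bm v}^n := \bm v^n - \bar{\bm v}$, with $\bar{\bm v}$ from \eqref{barV}. Because $\phi(0)=0$ and $\phi \to 1$ at infinity, $\tilde{\bm v}^n$ satisfies $(\tilde u_1^n,\tilde\theta^n,\partial_\eta \tilde q^n)|_{\eta=0}=\mathbf{0}$, $\tilde{\bm v}^n\to 0$ as $\eta\to+\infty$, and the linear system
\begin{equation*}
\partial_\tau \tilde{\bm v}^n + \bm A(\bm v^{n-1})\partial_\xi \tilde{\bm v}^n + \bm F(\bm v^{n-1},\partial_\eta \bm v^{n-1})\partial_\eta \tilde{\bm v}^n + \bm G(\bm v^{n-1})\tilde{\bm v}^n - \bm B(\bm v^{n-1})\partial_\eta^2 \tilde{\bm v}^n = \bm R^{n-1},
\end{equation*}
where $\bm R^{n-1}$ collects the terms in which $\bar{\bm v}$ replaces $\bm v^n$. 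By the induction hypothesis \eqref{induc-hypo}, $\theta^{n-1},\,q^{n-1},\,P-q^{n-1}\geq \delta$, so the symmetrizer $\bm S(\bm v^{n-1})$ from \eqref{S} is well-defined and uniformly positive, $\bm{SA}(\bm v^{n-1})$ is symmetric by \eqref{SA}, and $\bm{SB}(\bm v^{n-1})$ is diagonal positive-definite by \eqref{SB}. Since $\bm v^{n-1}-\bar{\bm v}\in\mathcal H^k(\Omega_{T_1})$ with $k\geq 4$, Sobolev embedding gives $L^\infty$-bounds on $\bm v^{n-1}$ and $\partial_\eta\bm v^{n-1}$, placing the problem within the standard theory of symmetric parabolic-hyperbolic systems; existence and uniqueness of a classical solution with $\bm v^n-\bar{\bm v}\in \mathcal H^k(\Omega_{T_1})$ then follow from a Galerkin or semigroup argument once the a priori estimate \eqref{gronwall-general} is established. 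The initial-trace identity \eqref{n-initial} is obtained from induction on $j$: since $\bm v^{n-1}$ and $\bm v^n$ share the same initial data $\bm v_0$ and, by hypothesis, the same $\partial_\tau^j$-traces at $\tau=0$ up to order $k$, differentiating \eqref{picard} $j$ times in $\tau$ and evaluating at $\tau=0$ yields the same recursion as the one generated by \eqref{V-sys}, whose unique solution is $\bm v_0^j$.

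The core of the argument is the a priori estimate. For each multi-index $|\alpha|\leq k$, I would apply $\partial^\alpha$ to the equation for $\tilde{\bm v}^n$ and take the $L^2(\Omega)$-inner product with $\bm S(\bm v^{n-1})\partial^\alpha\tilde{\bm v}^n$. Three structural cancellations organise the calculation. First, $\bm{SA}$ being symmetric together with periodicity in $\xi$ converts the transport term into a harmless quadratic form in $\partial_\xi(\bm{SA})$. Second, integration by parts in $\eta$ of $-\langle \bm S\partial^\alpha\tilde{\bm v}^n, \bm B\,\partial_\eta^2\partial^\alpha\tilde{\bm v}^n\rangle$ produces the dissipation $\langle \partial_\eta\partial^\alpha\tilde{\bm v}^n,\bm{SB}\,\partial_\eta\partial^\alpha\tilde{\bm v}^n\rangle$, and the boundary term at $\eta=0$ vanishes: $\bm{SB}$ being diagonal, it reads $\sum_i [\bm{SB}]_{ii}\partial^\alpha \tilde v_i^n\cdot\partial_\eta\partial^\alpha\tilde v_i^n|_{\eta=0}$, which is zero because $\tilde u_1^n|_{\eta=0}=\tilde\theta^n|_{\eta=0}=0$ and $\partial_\eta \tilde q^n|_{\eta=0}=0$ (these boundary conditions are preserved by all tangential derivatives $\partial_\tau^{\alpha_0}\partial_\xi^{\alpha_1}$, and the remaining $\alpha_2\geq 1$ cases are reduced to tangential ones by using the equation to trade $\partial_\eta^2$ for other derivatives). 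Third, the zeroth-order $\bm G$-term and the $\partial_\tau\bm S$-contribution are controlled by $\|\tilde{\bm v}^n\|_{\mathcal H^k}^2$ times coefficients depending only on $\bm v^{n-1}$ and $P$.

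The commutators $[\partial^\alpha,\bm A]\partial_\xi\tilde{\bm v}^n$, $[\partial^\alpha,\bm F]\partial_\eta\tilde{\bm v}^n$, and $[\partial^\alpha,\bm B]\partial_\eta^2\tilde{\bm v}^n$ are estimated via the Moser-type inequalities \eqref{moser0}--\eqref{moser2}; the rational denominators $(P-q^{n-1})^{-j}$ and $Q(\bm v^{n-1})^{-j}$ are handled by the chain rule, producing polynomial factors in $\|\bm v^{n-1}-\bar{\bm v}\|_{\mathcal H^k}$ multiplied by negative powers of $\delta$. The second-order commutator $[\partial^\alpha,\bm B]\partial_\eta^2\tilde{\bm v}^n$ creates a term of the type $\|\partial_\eta\partial^\alpha\tilde{\bm v}^n\|_{L^2}\cdot\mathcal{P}(\|\bm v^{n-1}-\bar{\bm v}\|_{\mathcal H^k})\cdot\|\tilde{\bm v}^n\|_{\mathcal H^k}$ that is absorbed into the dissipation via Young's inequality; the $\bm F$-commutators are treated similarly, after noting that although $\bm F$ already contains one $\eta$-derivative of $\bm v^{n-1}$, any derivative that would exceed order $k$ in $\bm v^{n-1}$ can be redistributed onto $\tilde{\bm v}^n$ and subsequently absorbed into dissipation. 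The forcing $\partial^\alpha\bm R^{n-1}$ is bounded by $C_0 M_1^2 \cdot\mathcal P(\|\bm v^{n-1}-\bar{\bm v}\|_{\mathcal H^k})$ via \eqref{est-V} and \eqref{est_V}.

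Collecting these estimates and summing over $|\alpha|\leq k$ yields, after using the uniform positivity of $\bm S$, a differential inequality
\begin{equation*}
\frac{d}{dt}\|\tilde{\bm v}^n\|_{\mathcal H^k(\Omega)}^2 + c_0\|\partial_\eta\tilde{\bm v}^n\|_{\mathcal H^k(\Omega)}^2 \leq C_0 M_1^2\bigl(1+\|\bm v^{n-1}-\bar{\bm v}\|_{\mathcal H^k(\Omega)}^{18}\bigr)\bigl(1+\|\tilde{\bm v}^n\|_{\mathcal H^k(\Omega)}^2\bigr),
\end{equation*}
the power $18$ reflecting the polynomial degree produced by the quadratic nonlinearities in $\bm F$, the chain-rule expansions of the rational denominators, and repeated products under the Moser inequality. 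Gronwall's lemma together with $\|\tilde{\bm v}^n(0)\|_{\mathcal H^k}^2\leq M_1^2$ from \eqref{est_V} then gives \eqref{gronwall-general}. I expect the main difficulty to be a bookkeeping one: ensuring that every commutator of the form $[\partial^\alpha,\bm F]\partial_\eta\tilde{\bm v}^n$, which a priori looks like it could lose an $\eta$-derivative on $\bm v^{n-1}$, can be redistributed so that the maximum total derivative count on each factor never exceeds $k$, and that its highest-derivative summand is always absorbable into the dissipation $c_0\|\partial_\eta\tilde{\bm v}^n\|_{\mathcal H^k}^2$. This is precisely the place where the particular splitting $\bm f(\bm v,\partial_\eta\bm v)=\bm F(\bm v,\partial_\eta\bm v)\partial_\eta\bm v$ introduced in \eqref{def_pf} pays off.
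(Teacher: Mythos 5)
Your overall architecture matches the paper's: homogenize with $\bar{\bm v}$, symmetrize with $\bm S(\bm v^{n-1})$ from \eqref{S}, exploit the symmetry of $[\bm{SA}]$ and positive-definiteness of $[\bm{SB}]$, control commutators via the Moser estimates \eqref{moser0}--\eqref{moser2}, and close with Gronwall. However, there is a genuine gap in your treatment of the boundary terms, and it is precisely the point where the structural choice of $\bm F$ in \eqref{F} is essential — a point you flag as a ``bookkeeping'' difficulty but do not actually resolve.

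You claim that the boundary term
\[
I=\int_{\bT}\partial^\alpha(\bm v^n-\bar{\bm v})^{\it T}[\bm{SB}](\bm v^{n-1})\,\partial_\eta\partial^\alpha(\bm v^n-\bar{\bm v})\big|_{\eta=0}\,d\xi
\]
produced by the $\eta$-integration-by-parts of the $\bm B$-term ``vanishes'' for all $|\alpha|\leq k$, with the cases $\alpha_2\geq 1$ ``reduced to tangential ones by using the equation.'' This is not correct. For $\partial^\alpha=\partial_\eta\partial^\beta$ the integrand contains, e.g., $\partial_\eta\partial^\beta \tilde u_1^n\cdot\partial_\eta^2\partial^\beta\tilde u_1^n|_{\eta=0}$, and $\partial_\eta\tilde u_1^n|_{\eta=0}\neq 0$ in general, so the boundary conditions alone do not kill $I$. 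Using the equation to replace $\bm B\partial_\eta^2\partial^\beta(\bm v^n-\bar{\bm v})$ (as the paper does in Claim 2.1) does not reduce $I$ to a vanishing term; it rewrites $I=\sum_{i=1}^5 I_i$, most of which can be bounded by the trace inequality \eqref{trace}, but one particular piece of $I_4$, namely
\[
\int_{\bT}\partial^\alpha(\bm v^n-\bar{\bm v})^{\it T}[\bm{SF}](\bm v^{n-1},\partial^\alpha \bm v^{n-1})\,\partial_\eta\bm v^n\big|_{\eta=0}\,d\xi,
\]
cannot be controlled by itself: it carries the full $\partial^\alpha$ on $\bm v^{n-1}$ restricted to the boundary, and a trace estimate would cost one more $\eta$-derivative than is available.

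The resolution (the paper's Claims 2.1 and 2.2) is that the highest-order piece of the $\bm F$-commutator, $\big\langle\partial^\alpha(\bm v^n-\bar{\bm v})^{\it T}\bm S, \bm F(\bm v^{n-1},\partial_\eta\partial^\alpha\bm v^{n-1})\partial_\eta\bm v^n\big\rangle$, is handled by $\eta$-integration-by-parts — this is the ``redistribution onto $\tilde{\bm v}^n$'' you mention — and that integration by parts produces \emph{its own} boundary term at $\eta=0$, which is exactly the same $[\bm{SF}]$-integral above and cancels it. You omit this boundary term entirely, treating the redistribution as if it were boundary-free, which would leave the estimate open. Moreover, even for tangential $\alpha$, the vanishing of the corresponding $[\bm{SF}]$-boundary integral (equation \eqref{I_0}) is not automatic: it relies on the specific entries of $[\bm{SF}]$ in \eqref{SF} combined with the boundary conditions $(\hat u_1,\partial_\eta\hat q)|_{\eta=0}=0$, which is precisely ``why we choose $\bm F$ in the form \eqref{F}.'' Your write-up should make this two-sided cancellation explicit rather than folding it into bookkeeping; as written, the key lemma does not close.
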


\begin{proof}
We are mainly devoted to proving the estimate \eqref{gronwall-general}, since
the well-posedness of the linear problem \eqref{picard} is standard. Indeed, one can use the a priori estimate \eqref{gronwall-general} and a compactness argument  to show the local existence and uniqueness of a solution to problem \eqref{picard}.  The lifetime $T_1$ of the solution can be obtained by again using \eqref{gronwall-general} and a continuous induction argument. Furthermore, from the definition \eqref{initial-j} of $\bar v_0^j$ and the induction hypothesis $\eqref{induc-hypo}_1$ on $\bm v^{n-1}$, direct calculation shows \eqref{n-initial} immediately. 
 
Recall the known function $\bar{\bm v}$ defined by \eqref{barV} and set
$$\bm {\bm v}^n-\bar{\bm v}:=(v_1^n,\vartheta^n, w^n)^{\it  T}$$
to homogenize the boundary conditions of $\bm v^n$. 
 We know $\bm {\bm v}^n-\bar{\bm v}$ satisfies the following initial-boundary value problem,
\begin{equation}\label{lead-iden}
\begin{cases}
\partial_\tau(\bm {\bm v}^n-\bar{\bm v})+\bm{A}({\bm v}^{n-1})\partial_\xi({\bm v}^n-\bar{\bm v})
+\bm{F}({\bm v}^{n-1},\partial_\eta {\bm v}^{n-1})\partial_\eta {\bm v}^n\\
\qquad+\bm{G}({\bm v}^{n-1})({\bm v}^n-\bar{\bm v})-\bm{B}({\bm v}^{n-1})\partial_{\eta}^2({\bm v}^n-\bar{\bm v})=\bm{r}^{n-1},\\
(v_1^n,\vartheta^n, \partial_\eta w^n)|_{\eta=0}=\mathbf{0},\quad
\lim\limits_{\eta\rightarrow +\infty}({\bm v}^n-\bar{\bm v})(\tau,\xi,\eta)=\mathbf{0},\\
({\bm v}^n-\bar{\bm v})|_{\tau=0}={\bm v}_0(\xi,\eta)-\bar{\bm v}(0,\xi,\eta),
\end{cases}
\end{equation}
where the error term
\begin{align}\label{def-G}
  \bm{r}^{n-1}:=-\partial_\tau \bar{\bm v}-\bm{A}({\bm v}^{n-1})\partial_\xi\bar{\bm v}
  -\bm{G}({\bm v}^{n-1})\bar{\bm v}+\bm{B}({\bm v}^{n-1})\partial_{\eta}^2\bar{\bm v}.
\end{align}
Applying the operator $\partial^\alpha=\partial^{\alpha_0}_\tau\partial^{\alpha_1}_\xi\partial^{\alpha_2}_\eta, \ |\alpha|\leq k$, on the equations of $(\ref{lead-iden})_1$, it yields that
\begin{align}\label{partial-iden}
\begin{aligned}
&\partial_\tau\partial^\alpha({\bm v}^n-\bar{\bm v})+\bm{A}({\bm v}^{n-1})\partial_\xi\partial^\alpha({\bm v}^n-\bar{\bm v})
+\bm{F}({\bm v}^{n-1},\partial_\eta {\bm v}^{n-1})\partial_\eta\partial^\alpha {\bm v}^n\\
&\quad+\partial^\alpha\big(\bm{G}({\bm v}^{n-1})({\bm v}^n-\bar{\bm v})\big)-\bm{B}({\bm v}^{n-1})\partial_{\eta}^2\partial^\alpha ({\bm v}^n-\bar{\bm v})\\
&=-\big[\partial^\alpha,\bm{F}({\bm v}^{n-1},\partial_\eta {\bm v}^{n-1})\big]\partial_\eta ({\bm v}^n-\bar{\bm v})-\big[\partial^\alpha,\bm{A}({\bm v}^{n-1})\big]\partial_\xi({\bm v}^n-\bar{\bm v})\\
&\quad
+\big[\partial^\alpha,\bm{B}({\bm v}^{n-1})\big]\partial_{\eta}^2({\bm v}^n-\bar{\bm v})+\partial^\alpha\bm{r}^{n-1}.
\end{aligned}\end{align}
We now take the inner product of $\partial^\alpha({\bm v}^n-\bar{\bm v})^{\it T}\bm{S}({\bm v}^{n-1})$ and the identity (\ref{partial-iden}) over $\Omega=\{(\xi,\eta)|\xi\in\bT,\eta\in\bR_+\}$, where the matrix $\bm S(\cdot)$ is given in \eqref{S}, then estimate the resulting equation term by term. In this proof, denote by $C>0$ the generic constant depending only on $T_0, M_e, M_1,  \delta$ and the parameters of \eqref{lead-trans}, which may be different from line to line. 

\textbf{Step 1.} In this step, we are going to estimate the terms on the left hand side of \eqref{partial-iden}. Firstly, since $\bm S(\cdot)$ is symmetric, one has
\begin{align}\label{t-est0}
\begin{aligned}
&\left\langle \partial^\alpha({\bm v}^n-\bar{\bm v})^{\it T}\bm{S}({\bm v}^{n-1}),
\partial_\tau \partial^\alpha({\bm v}^n-\bar{\bm v})\right\rangle(\tau)\\
&=\frac{1}{2}\frac{d}{d\tau}\left\langle \partial^\alpha({\bm v}^n-\bar{\bm v})^{\it T}\bm{S}({\bm v}^{n-1}),
\partial^\alpha({\bm v}^n-\bar{\bm v})\right\rangle(\tau)\\
&\quad-\frac{1}{2}\left\langle\partial^\alpha({\bm v}^n-\bar{\bm v})^{\it T}\partial_\tau\bm{S}({\bm v}^{n-1}),
\partial^\alpha({\bm v}^n-\bar{\bm v})\right\rangle(\tau).
\end{aligned}\end{align}
It is easy to get that
\begin{align}\label{t-est}
\begin{aligned}
&\left|\left\langle\partial^\alpha({\bm v}^n-\bar{\bm v})^{\it T}\partial_\tau\bm{S}({\bm v}^{n-1}),
\partial^\alpha({\bm v}^n-\bar{\bm v})\right\rangle(\tau)\right|\\
&\leq\|\partial_\tau \bm{S}({\bm v}^{n-1})(\tau,\cdot)\|_{L^\infty(\Omega)}\cdot
\|\partial^\alpha({\bm v}^n-\bar{\bm v})(\tau,\cdot)\|^2_{L^2(\Omega)}\\
&\lesssim \big(1+\|({\bm v}^{n-1}-\bar{\bm v})(\tau)\|_{\mathcal{H}^3(\Omega)}^4\big)
\big\|({\bm v}^n-\bar{\bm v})(\tau)\big\|^2_{\mathcal{H}^k(\Omega)},
\end{aligned}\end{align}
where we have used the Sobolev embedding inequality in the last inequality. Note that in the rest of the proof, such kind of embedding inequalities will be used repeatedly, and we won't specify them one by one.  Plugging \eqref{t-est} into \eqref{t-est0} gives that
\begin{align}\label{S-est}
\begin{aligned}
&\left\langle\partial^\alpha({\bm v}^n-\bar{\bm v})^{\it T}\bm{S}({\bm v}^{n-1}),
\partial_\tau \partial^\alpha({\bm v}^n-\bar{\bm v})\right\rangle(\tau)\\
&\geq\frac{1}{2}\frac{d}{d\tau}\left\langle\partial^\alpha({\bm v}^n-\bar{\bm v})^{\it T}\bm{S}({\bm v}^{n-1}),
\partial^\alpha({\bm v}^n-\bar{\bm v})\right\rangle(\tau)\\
&\quad-C\big(1+\|({\bm v}^{n-1}-\bar{\bm v})(\tau)\|_{\mathcal{H}^3(\Omega)}^4\big)
\big\|({\bm v}^n-\bar{\bm v})(\tau)\big\|^2_{\mathcal{H}^k(\Omega)}.
\end{aligned}\end{align}

Secondly, recalling the symmetric matrix $[\bm{SA}](\cdot)$ in (\ref{SA}), one has that by integration by parts, 
\begin{align}\label{A-est}
\begin{aligned}
&\left|\left\langle\partial^\alpha({\bm v}^n-\bar{\bm v})^{\it T}\bm{S}({\bm v}^{n-1}),
\bm{A}({\bm v}^{n-1})\partial_\xi\partial^\alpha({\bm v}^n-\bar{\bm v})\right\rangle(\tau)\right|\\
&=\frac{1}{2}\left|\left\langle\partial^\alpha({\bm v}^n-\bar{\bm v})^{\it T}\partial_\xi\big([\bm{SA}]({\bm v}^{n-1})\big),
\partial^\alpha({\bm v}^n-\bar{\bm v})\right\rangle(\tau)\right|\\
&\leq\frac{1}{2}\left\|\partial_{\xi}\big([\bm{SA}]({\bm v}^{n-1})\big)(\tau,\cdot)\right\|_{L^\infty(\Omega)}\|\partial^\alpha({\bm v}^n-\bar{\bm v})(\tau,\cdot)\|^2_{L^2(\Omega)}\\
&\lesssim \big(1+\|({\bm v}^{n-1}-\bar{\bm v})(\tau)\|_{H^3(\Omega)}^5\big)\big\|({\bm v}^n-\bar{\bm v})(\tau)\big\|^2_{\mathcal{H}^k(\Omega)}.
\end{aligned}\end{align}

Thirdly, note that
\begin{align}\label{SF}
\begin{aligned}
&[\bm{SF}](\bm v,\partial_\eta \bm v):=\bm{S}(\bm v)\bm{F}(\bm v,\partial_\eta \bm v)\\
&\quad=\left(
\begin{array}{ccc}
(\frac{\nu(P-q)}{R}-\mu\theta)\theta\partial_\eta q & 0 & 0\\
-2\mu\theta q\partial_\eta u_1 & -\kappa\theta\partial_\eta q & \frac{\nu(P-q)}{a}\partial_\eta\theta\\
0 & 0 & \nu\theta\big(\partial_\eta\theta+\frac{\theta(P+q)}{2q(P-q)}\partial_\eta q\big)
\end{array}\right).
\end{aligned}\end{align}
Direct calculation yields that
\begin{align}\label{F-est}
\begin{aligned}
&\big|\left\langle\partial^\alpha({\bm v}^n-\bar{\bm v})^{T}\bm{S}({\bm v}^{n-1}),
\bm{F}({\bm v}^{n-1},\partial_\eta {\bm v}^{n-1})\partial_\eta\partial^\alpha {\bm v}^n\right\rangle(\tau)\big|\\
&\leq\left\|[\bm{SF}]({\bm v}^{n-1},\partial_\eta {\bm v}^{n-1})(\tau,\cdot)\right\|_{L^\infty(\Omega)}\|\partial^\alpha({\bm v}^n-\bar{\bm v})(\tau,\cdot)\|_{L^2(\Omega)}
\|\partial_\eta\partial^\alpha {\bm v}^n(\tau)\|_{L^2(\Omega)}\\
&\lesssim\big(1+\|({\bm v}^{n-1}-\bar{\bm v})(\tau)\|_{H^3(\Omega)}^4\big)\big\|({\bm v}^n-\bar{\bm v})(\tau)\big\|^2_{\mathcal{H}^k(\Omega)}
\cdot\left(1+\big\|\partial_\eta({\bm v}^n-\bar{\bm v})(\tau)\big\|^2_{\mathcal{H}^k(\Omega)}\right).
\end{aligned}\end{align}
Also, it follows that
\begin{align}\label{P-est}
\begin{aligned}
  &\left|\left\langle\partial^\alpha({\bm v}^n-\bar{\bm v})^{\it T}\bm{S}({\bm v}^{n-1}),
  \partial^\alpha\big(\bm{G}({\bm v}^{n-1}) ({\bm v}^n-\bar{\bm v})\big)\right\rangle(\tau)\right|\\
  &\leq\|\bm{S}({\bm v}^{n-1})(\tau,\cdot)\|_{L^\infty(\Omega)}\|\partial^\alpha({\bm v}^n-\bar{\bm v})(\tau,\cdot)\|_{L^2(\Omega)}
  \|\partial^\alpha\big(\bm{G}({\bm v}^{n-1})({\bm v}^n-\bar{\bm v})\big)(\tau,\cdot)\|_{L^2(\Omega)}\\
  &\lesssim \big(1+\|({\bm v}^{n-1}-\bar{\bm v})(\tau)\|_{H^2(\Omega)}^3\big)\big\|({\bm v}^n-\bar{\bm v})(\tau)\big\|_{\mathcal{H}^k(\Omega)}\\
  &\quad\cdot\big(1+\|({\bm v}^{n-1}-\bar{\bm v})(\tau)\|_{\mathcal{H}^k(\Omega)}\big)\big\|({\bm v}^n-\bar{\bm v})(\tau)\big\|_{\mathcal{H}^k(\Omega)}\\
  &\lesssim \big(1+\|({\bm v}^{n-1}-\bar{\bm v})(\tau)\|_{\mathcal{H}^k(\Omega)}^4\big)\big\|({\bm v}^n-\bar{\bm v})(\tau)\big\|_{\mathcal{H}^k(\Omega)}^2,
\end{aligned}\end{align}
where we have used from \eqref{moser1} and the definition \eqref{P} of $\bm G,$
\begin{align*}
  &\big\|\partial^\alpha\big(\bm{G}({\bm v}^{n-1})({\bm v}^n-\bar{\bm v})\big)(\tau,\cdot)\big\|_{L^2(\Omega)}
  \lesssim\big(1+\|({\bm v}^{n-1}-\bar{\bm v})(\tau)\|_{\mathcal{H}^k(\Omega)}\big)\|({\bm v}^n-\bar{\bm v})(\tau)\|_{\mathcal{H}^k(\Omega)}.
\end{align*}

Fourthly, we estimate the last term on the left side of \eqref{partial-iden}. 
Since the far-field states vanish as given in $(\ref{lead-iden})$, 
we obtain that by integration by parts,
\begin{align*}
&-\left\langle\partial^\alpha({\bm v}^n-\bar{\bm v})^{\it T}\bm{S}({\bm v}^{n-1}),
\bm{B}({\bm v}^{n-1})\partial_{\eta}^2\partial^\alpha ({\bm v}^n-\bar{\bm v})\right\rangle(\tau)\nonumber\\
&=\int_\bT\partial^\alpha({\bm v}^n-\bar{\bm v})^{\it T}[\bm{SB}]({\bm v}^{n-1})\partial_\eta\partial^\alpha({\bm v}^n-\bar{\bm v})\big|_{\eta=0}d\xi\nonumber\\
&\quad+\left\langle\partial_\eta\partial^\alpha({\bm v}^n-\bar{\bm v})^{\it T}[\bm{SB}]({\bm v}^{n-1}),
\partial_{\eta}\partial^\alpha ({\bm v}^n-\bar{\bm v})\right\rangle(\tau)\nonumber\\
&\quad+\left\langle\partial^\alpha({\bm v}^n-\bar{\bm v})^{\it T}\partial_\eta\big([\bm{SB}]({\bm v}^{n-1})\big),
\partial_{\eta}\partial^\alpha ({\bm v}^n-\bar{\bm v})\right\rangle(\tau).
\end{align*}
Then along with that the matrix $[\bm{SB}](\cdot)$ given in (\ref{SB}) is positive definite, gives that
\begin{align}\label{B_est}
\begin{aligned}
&-\left\langle\partial^\alpha({\bm v}^n-\bar{\bm v})^{\it T}\bm{S}({\bm v}^{n-1}),
\bm{B}({\bm v}^{n-1})\partial_{\eta}^2\partial^\alpha ({\bm v}^n-\bar{\bm v})\right\rangle(\tau)\\
&\geq \int_\bT\partial^\alpha({\bm v}^n-\bar{\bm v})^{\it T}[\bm{SB}]({\bm v}^{n-1})
\partial_\eta\partial^\alpha({\bm v}^n-\bar{\bm v})\big|_{\eta=0}d\xi+ c_\delta\|\partial_\eta\partial^\alpha({\bm v}^n-\bar{\bm v})(\tau,\cdot)\|^2_{L^2(\Omega)}\\
&\quad-\big\|\partial_\eta\big([\bm{SB}]({\bm v}^{n-1})\big)(\tau,\cdot)\big\|_{L^\infty(\Omega)}
\|\partial^\alpha({\bm v}^n-\bar{\bm v})(\tau,\cdot)\|_{L^2(\Omega)}\|\partial_\eta\partial^\alpha({\bm v}^n-\bar{\bm v})(\tau,\cdot)\|_{L^2(\Omega)}\\
&\geq c_\delta\|\partial_\eta\partial^\alpha({\bm v}^n-\bar{\bm v})(\tau,\cdot)\|^2_{L^2(\Omega)}
+\int_\bT\partial^\alpha({\bm v}^n-\bar{\bm v})^{\it T}[\bm{SB}]({\bm v}^{n-1})
\partial_\eta\partial^\alpha({\bm v}^n-\bar{\bm v})\big|_{\eta=0}d\xi\\
&\quad-C\big(1+\|({\bm v}^{n-1}-\bar{\bm v})(\tau)\|_{H^3(\Omega)}^5\big)
\big\|({\bm v}^n-\bar{\bm v})(\tau)\big\|_{\mathcal{H}^k(\Omega)}\big\|\partial_\eta({\bm v}^n-\bar{\bm v})(\tau)\big\|_{\mathcal{H}^k(\Omega)},
\end{aligned}\end{align}
where and in the sequel, $c_\delta=c(\delta)>0$ depending only on $\delta$ and the outflow.
To estimate the second term on the right-hand side of (\ref{B_est}), we use the following claim.
\begin{claim}\label{claim1}
\begin{align*}
&\Big|\int_\bT\partial^\alpha({\bm v}^n-\bar{\bm v})^{\it T}
[\bm{SB}]({\bm v}^{n-1})\partial_\eta\partial^\alpha({\bm v}^n-\bar{\bm v})\big|_{\eta=0}d\xi\nonumber\\
&\quad-\int_\bT\partial^\alpha({\bm v}^n-\bar{\bm v})^{\it T}
[\bm{SF}]({\bm v}^{n-1},\partial^\alpha {\bm v}^{n-1})\partial_\eta {\bm v}^n\big|_{\eta=0}d\xi\Big|\nonumber\\
&\lesssim \big(1+\|({\bm v}^{n-1}-\bar{\bm v})(\tau)\|_{\mathcal{H}^k(\Omega)}^{9}\big)\big(1+\|({\bm v}^n-\bar{\bm v})(\tau)\|_{\mathcal{H}^k(\Omega)}\big)
\big\|\partial_\eta({\bm v}^n-\bar{\bm v})(\tau)\big\|_{\mathcal{H}^k(\Omega)}.
\end{align*}
\end{claim}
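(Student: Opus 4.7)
The proof strategy rests on the identity obtained by multiplying equation $\eqref{picard}_1$ by $\bm S(\bm v^{n-1})$:
\begin{equation*}
[\bm{SB}](\bm v^{n-1})\partial_\eta^2 \bm v^n - [\bm{SF}](\bm v^{n-1},\partial_\eta \bm v^{n-1})\partial_\eta \bm v^n = \bm S(\bm v^{n-1})\bigl[\partial_\tau \bm v^n + \bm A(\bm v^{n-1})\partial_\xi \bm v^n + \bm G(\bm v^{n-1})\bm v^n\bigr].
\end{equation*}
The plan is to use this identity, together with its $\partial^{\beta}$-derivatives for $|\beta|\leq k-1$ restricted to $\{\eta=0\}$, to convert the boundary trace of $[\bm{SB}]\partial_\eta\partial^\alpha(\bm v^n-\bar{\bm v})$ into the claimed $[\bm{SF}]$-expression plus lower-order remainders.

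First, I would exploit the diagonal structure of $[\bm{SB}]$ displayed in \eqref{SB} to split the boundary integrand into three scalar contributions. For multi-indices $\alpha$ with $\alpha_2 = 0$, the conditions $(v_1^n,\vartheta^n)|_{\eta=0}=\mathbf{0}$ and $\partial_\eta w^n|_{\eta=0}=0$ kill all three contributions outright (for the third, note that $\partial_\eta\partial^\alpha w^n|_{\eta=0} = \partial^\alpha(\partial_\eta w^n)|_{\eta=0} = 0$). So only indices with $\alpha_2\geq 1$ contribute, and I would write $\alpha = \beta + (0,0,1)$ with $|\beta|\leq k-1$, so that $\partial_\eta\partial^\alpha = \partial_\eta^2\partial^{\beta}$.

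Next, apply $\partial^{\beta}$ to the displayed identity at $\eta=0$ and pull $\bm{SB}$ and $\bm{SF}$ outside the leading terms, producing commutator remainders $[\partial^{\beta},\bm{SB}]\partial_\eta^2\bm v^n$ and $[\partial^{\beta},\bm{SF}]\partial_\eta\bm v^n$ together with $\partial^{\beta}$ applied to $\bm S(\partial_\tau\bm v^n+\bm A\partial_\xi\bm v^n + \bm G\bm v^n)$. After contracting with $\partial^\alpha(\bm v^n-\bar{\bm v})^{\it T}$ and integrating over $\mathbb T$ at $\eta=0$, the leading $\bm{SF}$ piece matches the subtracted integral in the claim, and the difference is bounded by the remaining commutator and source pieces. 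For each such piece, the trace inequality \eqref{trace} turns the $L^2(\mathbb T_\xi)$ norm at $\eta=0$ into $L^2(\Omega)$ norms of the function and its $\partial_\eta$ derivative, and the Moser-type estimates \eqref{moser0}--\eqref{moser2} absorb the nonlinear products of $\bm v^{n-1}-\bar{\bm v}$, $\bm v^n-\bar{\bm v}$ and $\bar{\bm v}$ into the stated polynomial bound in $\mathcal H^k$-norms. The high power $\|\bm v^{n-1}-\bar{\bm v}\|_{\mathcal H^k}^{9}$ accumulates because $\bm{SB}$ and $\bm{SF}$ are rational expressions in $\bm v^{n-1}$ involving $Q$, $P-q$ and $\theta$, so each derivative brought down through them costs a factor controlled via the embedding $\mathcal H^3\hookrightarrow L^\infty$.

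The main obstacle is the commutator $[\partial^{\beta},\bm{SB}]\partial_\eta^2\bm v^n$: formally it carries two $\eta$-derivatives of $\bm v^n$ together with derivatives of $\bm v^{n-1}$ that together exceed the working regularity $\mathcal H^k$. The resolution is to use the equation $\eqref{picard}_1$ once more (before applying $\partial^{\beta}$) to replace the isolated factor $\partial_\eta^2\bm v^n$ by $\bm B(\bm v^{n-1})^{-1}(\partial_\tau\bm v^n+\bm A\partial_\xi\bm v^n+\bm F\partial_\eta\bm v^n+\bm G\bm v^n)$, which is valid since $\bm B$ is uniformly invertible by the lower bounds of $\eqref{induc-hypo}_3$. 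Distributing $\partial^{\beta}$ and the trace inequality then yields only $\mathcal H^k$-controlled quantities, at the price of the stated polynomial dependence on $\|\bm v^{n-1}-\bar{\bm v}\|_{\mathcal H^k}$ and $\|\bm v^n-\bar{\bm v}\|_{\mathcal H^k}$.
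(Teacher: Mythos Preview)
Your overall strategy---using the $\partial^\beta$-differentiated equation (with $\partial^\alpha=\partial_\eta\partial^\beta$) to rewrite the $[\bm{SB}]\partial_\eta\partial^\alpha$ boundary term---matches the paper's. But you misidentify the term that must be subtracted. When you pull $[\bm{SF}](\bm v^{n-1},\partial_\eta\bm v^{n-1})$ through $\partial^\beta$, the leading piece is $[\bm{SF}](\bm v^{n-1},\partial_\eta\bm v^{n-1})\,\partial^\alpha\bm v^n$, whereas the subtracted integral in the claim is $[\bm{SF}](\bm v^{n-1},\partial^\alpha\bm v^{n-1})\,\partial_\eta\bm v^n$; these carry $\partial^\alpha$ on \emph{different} factors. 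Your leading piece is in fact harmless (trace plus the dissipation norm $\|\partial_\eta(\bm v^n-\bar{\bm v})\|_{\mathcal H^k}$ handles it). The genuinely dangerous contribution sits \emph{inside} the commutator $[\partial^\beta,\bm F(\bm v^{n-1},\partial_\eta\bm v^{n-1})]\partial_\eta\bm v^n$: by linearity of $\bm F$ in its second slot, the term where all of $\partial^\beta$ hits $\partial_\eta\bm v^{n-1}$ is exactly $\bm F(\bm v^{n-1},\partial^\alpha\bm v^{n-1})\partial_\eta\bm v^n$. At $\eta=0$ this carries $k$ derivatives on $\bm v^{n-1}$, and \eqref{trace} would then demand $\mathcal H^{k+1}$ control of $\bm v^{n-1}$, which is unavailable. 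This is why that specific term is split off in the claim (to be cancelled later against the matching boundary term in Claim~\ref{claim2}); the remaining pieces of the commutator are the $\mathcal R$ in the paper's \eqref{div-F}.

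Your concern about $[\partial^\beta,\bm{SB}]\partial_\eta^2\bm v^n$ is also misplaced: since at least one derivative lands on $\bm{SB}$, the factor $\partial^{\beta-\beta'}\partial_\eta^2\bm v^n$ carries at most $k$ derivatives, and the trace step costs one more $\partial_\eta$, which is precisely absorbed by $\|\partial_\eta(\bm v^n-\bar{\bm v})\|_{\mathcal H^k}$ on the right-hand side (this is the paper's estimate \eqref{I2}). No second invocation of the equation is needed there. Finally, for $\alpha_2=0$ you verified the first integral vanishes but not the second; that requires the explicit structure of $[\bm{SF}]$ in \eqref{SF} together with the boundary conditions $(u_1^{n-1},\partial_\eta q^{n-1})|_{\eta=0}=\mathbf 0$, as carried out around \eqref{I_0}.
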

For the moment, we assume Claim \ref{claim1} to be proved later, such that it follows from \eqref{B-est} that
\begin{align}\label{B-est}
\begin{aligned}
&-\left\langle\partial^\alpha({\bm v}^n-\bar{\bm v})^{\it T}\bm{S}({\bm v}^{n-1}),
\bm{B}({\bm v}^{n-1})\partial_{\eta}^2\partial^\alpha ({\bm v}^n-\bar{\bm v})\right\rangle(\tau)\\
&\geq c_\delta\|\partial_\eta\partial^\alpha({\bm v}^n-\bar{\bm v})(\tau,\cdot)\|^2_{L^2(\Omega)}+\int_\bT\partial^\alpha({\bm v}^n-\bar{\bm v})^{\it T}[\bm{SF}]({\bm v}^{n-1},\partial^\alpha {\bm v}^{n-1})\partial_\eta {\bm v}^n\big|_{\eta=0}d\xi\\
&\quad -C\big(1+\|({\bm v}^{n-1}-\bar{\bm v})(\tau)\|_{\mathcal{H}^k(\Omega)}^{9}\big)\big(1+\|({\bm v}^n-\bar{\bm v})(\tau)\|_{\mathcal{H}^k(\Omega)}\big)
\big\|\partial_\eta({\bm v}^n-\bar{\bm v})(\tau)\big\|_{\mathcal{H}^k(\Omega)}.
\end{aligned}\end{align}

\textbf{Step 2.} In this step, we will estimate the terms on the right hand side of \eqref{partial-iden}. Firstly, we give the following claim to be proved later.
\begin{claim}\label{claim2}
\begin{align*}
&\Big|\left\langle\partial^\alpha({\bm v}^n-\bar{\bm v})^{\it T}\bm{S}({\bm v}^{n-1}),
[\partial^\alpha,\bm{F}({\bm v}^{n-1},\partial_\eta {\bm v}^{n-1})]\partial_\eta {\bm v}^n\right\rangle(\tau)\nonumber\\
&\quad+\int_\bT\partial^\alpha({\bm v}^n-\bar{\bm v})^{\it T}
[\bm{SF}]({\bm v}^{n-1},\partial^\alpha {\bm v}^{n-1})\partial_\eta {\bm v}^n\big|_{\eta=0}d\xi\Big|\nonumber\\
&\lesssim \big(1+\|({\bm v}^{n-1}-\bar{\bm v})(\tau)\|_{\mathcal{H}^k(\Omega)}^5\big)\big(1+\|({\bm v}^n-\bar{\bm v})(\tau)\|_{\mathcal{H}^k(\Omega)}\big)\nonumber\\
&\quad\cdot\big(\|({\bm v}^n-\bar{\bm v})(\tau)\|_{\mathcal{H}^k(\Omega)}
+\|\partial_\eta({\bm v}^n-\bar{\bm v})(\tau)\|_{\mathcal{H}^k(\Omega)}\big).
\end{align*}
\end{claim}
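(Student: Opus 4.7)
The plan is to expand the commutator by Leibniz's rule, isolate the one term that would cost us a $(k+1)$-st derivative on $\bm v^{n-1}$, and absorb it into the stated $\eta=0$ boundary integral via an integration by parts in $\eta$; the remaining pieces then fall under the Moser-type estimates \eqref{moser0}--\eqref{moser2} combined with Sobolev embedding, in the same spirit as the estimates \eqref{t-est}--\eqref{P-est} already carried out.

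Since $\bm F(\bm v,\partial_\eta \bm v)$ is linear in its second slot (see \eqref{F}), write $\bm F(\bm v^{n-1},\partial_\eta\bm v^{n-1})=\sum_i \bm F^{(i)}(\bm v^{n-1})\,\partial_\eta v^{n-1}_i$ with $\bm F^{(i)}$ smooth in $\bm v^{n-1}$. By Leibniz,
\[
[\partial^\alpha,\bm F(\bm v^{n-1},\partial_\eta\bm v^{n-1})]\partial_\eta\bm v^n = \sum_{\substack{\beta+\gamma=\alpha\\ \beta\neq 0}} C^{\beta}_{\alpha}\,\partial^\beta\bigl(\bm F(\bm v^{n-1},\partial_\eta\bm v^{n-1})\bigr)\partial^\gamma\partial_\eta\bm v^n.
\]
Within each $\partial^\beta \bm F$, the only contribution that would produce $|\alpha|+1$ derivatives of $\bm v^{n-1}$ comes from $\beta=\alpha$ with every derivative falling on the factor $\partial_\eta v^{n-1}_i$; call this critical piece $\mathcal I$,
\[
\mathcal I = \Big\langle\partial^\alpha(\bm v^n-\bar{\bm v})^{\it T}\bm S(\bm v^{n-1}),\ \tilde{\bm F}(\bm v^{n-1})\,\partial_\eta\partial^\alpha\bm v^{n-1}\cdot\partial_\eta\bm v^n\Big\rangle(\tau),
\]
where $\tilde{\bm F}$ is the natural matrix obtained from $\bm F$ by freezing the first slot at $\bm v^{n-1}$, so that $\bm S\tilde{\bm F}$ with derivative-slot $\partial^\alpha\bm v^{n-1}$ is precisely $[\bm{SF}](\bm v^{n-1},\partial^\alpha\bm v^{n-1})$ appearing in the statement.

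The only delicate step is to integrate $\mathcal I$ by parts in $\eta$ so as to shift the $\partial_\eta$ off $\partial^\alpha\bm v^{n-1}$. Because $\bm v^{n-1}-\bar{\bm v}$ and $\bm v^n-\bar{\bm v}$ lie in $\mathcal H^k(\Omega_{T_1})$, and $\partial_\eta\bar{\bm v}$ is compactly supported in $\eta$, all integrands decay as $\eta\to\infty$, so only the $\eta=0$ contribution survives and produces exactly
\[
-\int_{\mathbb T}\partial^\alpha(\bm v^n-\bar{\bm v})^{\it T}[\bm{SF}](\bm v^{n-1},\partial^\alpha\bm v^{n-1})\partial_\eta\bm v^n\big|_{\eta=0}\,d\xi,
\]
which cancels the boundary integral added on the left-hand side of the claim. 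In the resulting bulk remainder $\partial_\eta$ lands either on $\bm S(\bm v^{n-1})$, on $\tilde{\bm F}(\bm v^{n-1})$, on $\partial^\alpha(\bm v^n-\bar{\bm v})$, or on $\partial_\eta\bm v^n$, and in each case at most $k$ derivatives sit on $\bm v^{n-1}$. The sub-critical commutator terms ($\beta\neq\alpha$) carry at most $k$ derivatives on $\bm v^{n-1}$ from the outset, so both families are closed by \eqref{moser0}--\eqref{moser2} and Sobolev embedding. Tracking $L^\infty$/$L^2$ splittings of the factors of $\bm S$, $\bm F^{(i)}$, and their derivatives gives at most a fifth power of $\|\bm v^{n-1}-\bar{\bm v}\|_{\mathcal H^k}$, while the factors $(1+\|\bm v^n-\bar{\bm v}\|_{\mathcal H^k})$ and $\|\partial_\eta(\bm v^n-\bar{\bm v})\|_{\mathcal H^k}$ come from the surviving $\partial_\eta\bm v^n$ (or $\partial_\eta^2\bm v^n$ produced by a subsequent IBP) and from $\partial^\alpha(\bm v^n-\bar{\bm v})$, as in the analogous commutator estimate of Lemma~2.1 in \cite{liu2016mhdboundarylayer}. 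The substantive obstacle is solely the identification and absorption of the $\partial^\alpha\partial_\eta\bm v^{n-1}$ loss by the boundary counterterm; once that cancellation is in place the remaining estimate is routine bookkeeping.
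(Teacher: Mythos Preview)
Your proposal is correct and follows essentially the same route as the paper: isolate the single ``critical'' piece in the commutator where all $|\alpha|$ derivatives hit the $\partial_\eta\bm v^{n-1}$ slot of $\bm F$ (using linearity of $\bm F$ in its second argument), integrate that piece by parts in $\eta$ so that the resulting $\eta=0$ boundary term matches and cancels the one added in the claim, and then close the remaining bulk and sub-critical commutator pieces with the Moser-type product estimates \eqref{moser0}--\eqref{moser2} and Sobolev embedding. This is exactly the paper's decomposition \eqref{div-F1} together with the integration by parts in \eqref{J1} and the remainder estimate \eqref{R-est}.
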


Next, we turn to estimate the remaining terms on the right hand side of (\ref{partial-iden}) involving commutators.
It holds that by using \eqref{moser2} and the definition \eqref{AB} of $\bm A$,
\begin{align}\label{cA-est}
\begin{aligned}
&\left|\left\langle\partial^\alpha({\bm v}^n-\bar{\bm v})^{\it T}\bm{S}({\bm v}^{n-1}),
[\partial^\alpha,\bm{A}({\bm v}^{n-1})]\partial_\xi({\bm v}^n-\bar{\bm v})\right\rangle(\tau)\right|\\
	&\leq\|\bm{S}({\bm v}^{n-1})(\tau,\cdot)\|_{L^\infty(\Omega)}\|\partial^\alpha({\bm v}^n-\bar{\bm v})(\tau,\cdot)\|_{L^2(\Omega)}
\left\|[\partial^\alpha,\bm{A}({\bm v}^{n-1})]\partial_\xi({\bm v}^n-\bar{\bm v})(\tau,\cdot)\right\|_{L^2(\Omega)}\\
	&\lesssim \big(1+\|({\bm v}^{n-1}-\bar{\bm v})(\tau)\|_{H^2(\Omega)}^3\big)\big\|({\bm v}^n-\bar{\bm v})(\tau)\big\|_{\mathcal{H}^k(\Omega)}\\
	&\qquad\cdot
	\big(1+\|({\bm v}^{n-1}-\bar{\bm v})(\tau)\|_{\mathcal{H}^k(\Omega)}^3\big)\big\|\partial_\xi({\bm v}^n-\bar{\bm v})(\tau)\big\|_{\mathcal{H}^{k-1}(\Omega)}\\
	&\lesssim \big(1+\|({\bm v}^{n-1}-\bar{\bm v})(\tau)\|_{\mathcal{H}^k(\Omega)}^6\big)\big\|({\bm v}^n-\bar{\bm v})(\tau)\big\|_{\mathcal{H}^k(\Omega)}^2.
\end{aligned}\end{align}
Similarly, one has 
\begin{align}\label{cB-est}
\begin{aligned}
&\left|\left\langle\partial^\alpha({\bm v}^n-\bar{\bm v})^{\it T}\bm{S}({\bm v}^{n-1}),
[\partial^\alpha,\bm{B}({\bm v}^{n-1})]\partial_{\eta}^2({\bm v}^n-\bar{\bm v})\right\rangle(\tau)\right|\\
&\leq\|\bm{S}({\bm v}^{n-1})(\tau,\cdot)\|_{L^\infty(\Omega)}\|\partial^\alpha({\bm v}^n-\bar{\bm v})(\tau,\cdot)\|_{L^2(\Omega)}
\left\|[\partial^\alpha,\bm{B}({\bm v}^{n-1})]\partial_\eta^2({\bm v}^n-\bar{\bm v})(\tau,\cdot)\right\|_{L^2(\Omega)}\\
&\lesssim \big(1+\|({\bm v}^{n-1}-\bar{\bm v})(\tau)\|_{H^2(\Omega)}^3\big)\big\|({\bm v}^n-\bar{\bm v})(\tau)\big\|_{\mathcal{H}^k(\Omega)}\\
&\qquad\cdot\big(1+\|({\bm v}^{n-1}-\bar{\bm v})(\tau)\|_{\mathcal{H}^k(\Omega)}^5\big)\big\|\partial_\eta^2({\bm v}^n-\bar{\bm v})(\tau)\big\|_{\mathcal{H}^{k-1}(\Omega)}\\
&\lesssim \big(1+\|({\bm v}^{n-1}-\bar{\bm v})(\tau)\|_{\mathcal{H}^k(\Omega)}^8\big)\big\|({\bm v}^n-\bar{\bm v})(\tau)\big\|_{\mathcal{H}^k(\Omega)}
\big\|\partial_\eta({\bm v}^n-\bar{\bm v})(\tau)\big\|_{\mathcal{H}^k(\Omega)}.
\end{aligned}\end{align}

Further, recalling $\bm{r}$ given in \eqref{def-G}, it is easy to verify that
\begin{align}\label{G-est}
\begin{aligned}
&\left|\left\langle\partial^\alpha({\bm v}^n-\bar{\bm v})^{\it T}\bm{S}({\bm v}^{n-1}), \partial^\alpha\bm{r}\right\rangle(\tau)\right|\\
&\leq\|\bm{S}({\bm v}^{n-1})(\tau,\cdot)\|_{L^\infty(\Omega)}\|\partial^\alpha({\bm v}^n-\bar{\bm v})(\tau,\cdot)\|_{L^2(\Omega)}
\big\|\partial^\alpha\bm{r}(\tau,\cdot)\big\|_{L^2(\Omega)}\\
&\lesssim \big(1+\|({\bm v}^{n-1}-\bar{\bm v})(\tau)\|_{\mathcal{H}^k(\Omega)}^8\big)\big\|({\bm v}^n-\bar{\bm v})(\tau)\big\|_{\mathcal{H}^k(\Omega)}.
\end{aligned}\end{align}

\textbf{Step 3.} Now, we can obtain the desired estimate. 
Indeed, by plugging the estimates (\ref{S-est})-(\ref{P-est}), (\ref{B-est}), Claim \ref{claim2}, \eqref{cA-est}-\eqref{G-est} into \eqref{partial-iden},
it follows that for $|\alpha|\leq k,\ k\geq 4$,
\begin{align*}
&\frac{1}{2}\frac{d}{d\tau}\sum_{|\alpha|\leq k}\left\langle\partial^\alpha({\bm v}^n-\bar{\bm v})^{\it T}\bm{S}({\bm v}^{n-1}),
\partial^\alpha({\bm v}^n-\bar{\bm v})\right\rangle(\tau)
+c_\delta\|\partial_\eta({\bm v}^n-\bar{\bm v})(\tau)\|^2_{\mathcal{H}^k(\Omega)}\nonumber\\
&\lesssim \big(1+\|({\bm v}^{n-1}-\bar{\bm v})(\tau)\|_{\mathcal{H}^k(\Omega)}^{9}\big)\big(1+\|({\bm v}^n-\bar{\bm v})(\tau)\|_{\mathcal{H}^k(\Omega)}\big)\nonumber\\
&\qquad\cdot
\big(\|({\bm v}^n-\bar{\bm v})(\tau)\|_{\mathcal{H}^k(\Omega)}+\|\partial_\eta({\bm v}^n-\bar{\bm v})(\tau)\|_{\mathcal{H}^k(\Omega)}\big)\nonumber\\
&\leq \frac{c_\delta}{2}\|\partial_\eta({\bm v}^n-\bar{\bm v})(\tau)\|_{\mathcal{H}^k(\Omega)}^2
+C\big(1+\|({\bm v}^{n-1}-\bar{\bm v})(\tau)\|_{\mathcal{H}^k(\Omega)}^{18}\big)\big(1+\|({\bm v}^n-\bar{\bm v})(\tau)\|^2_{\mathcal{H}^k(\Omega)}\big),
\end{align*}
which implies that
\begin{align}\label{energy}
\begin{aligned}
&\frac{d}{d\tau}\sum_{|\alpha|\leq k}\left\langle\partial^\alpha({\bm v}^n-\bar{\bm v})^{\it T}\bm{S}({\bm v}^{n-1}),
\partial^\alpha({\bm v}^n-\bar{\bm v})\right\rangle(\tau)
+\|\partial_\eta({\bm v}^n-\bar{\bm v})(\tau)\|^2_{\mathcal{H}^k(\Omega)}\\
&\leq C\big(1+\|({\bm v}^{n-1}-\bar{\bm v})(\tau)\|_{\mathcal{H}^k(\Omega)}^{18}\big)
\big(1+\|({\bm v}^n-\bar{\bm v})(\tau)\|^2_{\mathcal{H}^k(\Omega)}\big).
\end{aligned}\end{align}
Since $\bm{S}(\cdot)$ is positive-definite, together with the assumption $\eqref{induc-hypo}_3$,
one has
\begin{align*}
&\left\langle\partial^\alpha({\bm v}^n-\bar{\bm v})^{\it T}\bm{S}({\bm v}^{n-1}),\partial^\alpha({\bm v}^n-\bar{\bm v})\right\rangle(\tau)
\geq c_\delta\|\partial^\alpha({\bm v}^n-\bar{\bm v})(\tau,\cdot)\|^2_{L^2(\Omega)},
\end{align*}
and then,
\begin{align}\label{S_positive}
&\sum_{|\alpha|\leq k}\left\langle\partial^\alpha({\bm v}^n-\bar{\bm v})^{\it T}\bm{S}({\bm v}^{n-1}),\partial^\alpha({\bm v}^n-\bar{\bm v})\right\rangle(\tau)
\geq c_\delta\|({\bm v}^n-\bar{\bm v})(\tau)\|^2_{\mathcal{H}^k(\Omega)}.
\end{align}
Using (\ref{S_positive}), we apply the Gronwall inequality to (\ref{energy}) to obtain that for $t\in[0,T_1],$
\begin{align}\label{energy-1}
\begin{aligned}
&\sum_{|\alpha|\leq k}\left\langle\partial^\alpha({\bm v}^n-\bar{\bm v})^{\it T}\bm{S}({\bm v}^{n-1}),\partial^\alpha({\bm v}^n-\bar{\bm v})\right\rangle(t)
+\int_{0}^{t}\big\|\partial_\eta({\bm v}^n-\bar{\bm v})(\tau)\big\|^2_{\mathcal{H}^k(\Omega)}d\tau\\
\leq~& C\Big(1+\sum_{|\alpha|\leq k}\left\langle\partial^\alpha({\bm v}^n-\bar{\bm v})^{\it T}\bm{S}({\bm v}^{n-1}),
\partial^\alpha({\bm v}^n-\bar{\bm v})\right\rangle(0)\Big)\\
&\qquad\cdot\exp\Big\{C\int_{0}^{t}\big\|({\bm v}^{n-1}-\bar{\bm v})(\tau)\big\|_{\mathcal{H}^k(\Omega)}^{18}d\tau\Big\}.
\end{aligned}\end{align}
Meanwhile, from $\eqref{induc-hypo}_1$ and \eqref{n-initial}:
\begin{align*}
  \partial^j_\tau {\bm v}^{n-1}|_{\tau=0}=\partial^j_\tau \bm v^{n}|_{\tau=0}={\bm v}_0^j(\xi,\eta),\quad 0\leq j\leq k,
\end{align*}
 along with \eqref{est_V}, it yields that
\begin{align}\label{est-initial}
\sum_{|\alpha|\leq k}\left\langle\partial^\alpha({\bm v}^n-\bar{\bm v})^{\it T}\bm{S}({\bm v}^{n-1}),
\partial^\alpha({\bm v}^n-\bar{\bm v})\right\rangle(0)\lesssim M_1^3.
\end{align}
Substituting \eqref{est-initial} into \eqref{energy-1} and using \eqref{S_positive} again,
we obtain that there exists a constant $C_0>0$ such that
\begin{align*}
\begin{aligned}
&\|{\bm v}^n-\bar{\bm v}\|^2_{\mathcal{H}^k(\Omega_t)}
+\int_{0}^{t}\big\|\partial_\eta({\bm v}^n-\bar{\bm v})(\tau)\big\|^2_{\mathcal{H}^k(\Omega)}d\tau\\
&\leq C_0M_1^3~ \exp\Big\{C_0\int_{0}^{t}\big\|({\bm v}^{n-1}-\bar{\bm v})(\tau)\big\|_{\mathcal{H}^k(\Omega)}^{18}d\tau\Big\}.
\end{aligned}\end{align*}

\end{proof}

To complete the proof of Proposition~\ref{energy-est}, it remains to show Claim \ref{claim1} and Claim \ref{claim2}. 
\begin{proof}[Proof of Claim \ref{claim1}.]
Denoted by
\begin{align}\label{def_I}
I:=\int_\bT\partial^\alpha({\bm v}^n-\bar{\bm v})^{\it T}[\bm{SB}]({\bm v}^{n-1})\partial_\eta\partial^\alpha({\bm v}^n-\bar{\bm v})\big|_{\eta=0}d\xi.
\end{align}
First of all, for the case of $\partial^\alpha=\partial^{\alpha_0}_\tau\partial^{\alpha_1}_\xi$, 
 the boundary conditions on $\{\eta=0\}$ in 
 $(\ref{lead-iden})$ gives
 \begin{align}\label{bd-alpha}
 	\partial^\alpha({\bm v}^n-\bar{\bm v})\big|_{\eta=0}=\big(0,0,\partial^\alpha w^n\big)(\tau,\xi,0),
 	\end{align}
 	and
 	\begin{align*}
 	\partial_\eta\partial^\alpha({\bm v}^n-\bar{\bm v})\big|_{\eta=0}=\big(\partial_\eta\partial^\alpha v^n,~\partial_\eta\partial^\alpha\vartheta^n,~0\big)(\tau,\xi,0).
 \end{align*}
Then combining with the expression \eqref{SB} of $[\bm{SB}]$, it is easy to calculate that
\begin{equation}\label{I=0}
I=0.
\end{equation}
Next, for the term $\int_\bT\partial^\alpha({\bm v}^n-\bar{\bm v})^{\it T}[\bm{SF}]({\bm v}^{n-1}, \partial^\alpha {\bm v}^{n-1})\partial_\eta {\bm v}^n|_{\eta=0}d\xi,$
from the definition \eqref{SF} of $[\bm{SF}]$ and the boundary conditions $\eqref{induc-hypo}_2$ we know that at $\{\eta=0\}$,
\begin{align*}
	&[\bm{SF}](\bm v^{n-1},\partial^\alpha \bm v^{n-1})\\
	&=\left(
\begin{array}{ccc}
(\frac{\nu(P-q^{n-1})}{R}-\mu\theta^{n-1})\theta^{n-1}\partial^\alpha q^{n-1} & 0 & 0\\
-2\mu\theta^{n-1} q^{n-1}\partial^\alpha u_1^{n-1} & -\kappa\theta^{n-1}\partial^\alpha q^{n-1} & \frac{\nu(P-q^{n-1})}{a}\partial^\alpha \theta^{n-1}\\
0 & 0 & \nu\theta^{n-1}\big(\partial^\alpha \theta^{n-1}+\frac{\theta^{n-1}(P+q^{n-1})}{2q^{n-1}(P-q^{n-1})}\partial^\alpha q^{n-1}\big)
\end{array}\right)\\
&=\left(
\begin{array}{ccc}
(\frac{\nu(P-q^{n-1})}{R}-\mu\theta^{*})\theta^{*}\partial^\alpha q^{n-1} & 0 & 0\\
0 & -\kappa\theta^{*}\partial^\alpha q^{n-1} & \frac{\nu(P-q^{n-1})}{a}\partial^\alpha \theta^{*}\\
0 & 0 & \nu\theta^{*}\big(\partial^\alpha \theta^{*}+\frac{\theta^{*}(P+q^{n-1})}{2q^{n-1}(P-q^{n-1})}\partial^\alpha q^{n-1}\big)
\end{array}\right).
\end{align*}
Then, by \eqref{bd-alpha} and 
\begin{align*}
 	\partial_\eta{\bm v}^n\big|_{\eta=0}=\big(\partial_\eta u_1^n,~\partial_\eta \theta^n,~0\big)(\tau,\xi,0),
 \end{align*}
  direct calculation yields that
\begin{equation}\label{I_0}
 \int_\bT\partial^\alpha({\bm v}^n-\bar{\bm v})^{\it T}[\bm{SF}]({\bm v}^{n-1}, \partial^\alpha {\bm v}^{n-1})\partial_\eta {\bm v}^n\big|_{\eta=0}d\xi=0,
 \end{equation}
and that is why we choose $\bm{F}$ in the form \eqref{F}. 
In this case, Claim \ref{claim1} follows automatically from \eqref{I=0} and \eqref{I_0}. 

Now, we only need to investigate the case of  $\partial^\alpha=\partial_\eta\partial^\beta,\ |\beta|\leq k-1$. By using the identity (\ref{partial-iden}) to replace
$$\bm{B}({\bm v}^{n-1})\partial_{\eta}\partial^\alpha ({\bm v}^n-\bar{\bm v})=\bm{B}({\bm v}^{n-1})\partial_{\eta}^2\partial^\beta ({\bm v}^n-\bar{\bm v})$$ 
in \eqref{def_I} to obtain that
\begin{align}\label{def-I}
\begin{aligned}
	I&=\int_\bT\partial^\alpha({\bm v}^n-\bar{\bm v})^{\it T}
	\Big\{\bm{S}({\bm v}^{n-1})\partial_\tau\partial^\beta({\bm v}^n-\bar{\bm v})\\
	&\qquad\quad+[\bm{SA}]({\bm v}^{n-1})\partial_\xi\partial^\beta({\bm v}^n-\bar{\bm v})
+[\bm{SF}]({\bm v}^{n-1},\partial_\eta {\bm v}^{n-1})\partial_\eta\partial^\beta {\bm v}^n\Big\}\Big|_{\eta=0}d\xi\\
	&\quad-\int_\bT\partial^\alpha({\bm v}^n-\bar{\bm v})^{\it T}\bm{S}({\bm v}^{n-1})
[\partial^\beta,\bm{B}({\bm v}^{n-1})]\partial_{\eta}^2{\bm v}^n\big|_{\eta=0}d\xi\\
	&\quad+\int_\bT\partial^\alpha({\bm v}^n-\bar{\bm v})^{\it T}\bm{S}({\bm v}^{n-1})\\
&\qquad\cdot\Big\{\partial^\beta\big(\bm{G}({\bm v}^{n-1})({\bm v}^n-\bar{\bm v})\big)
+[\partial^\beta,\bm{A}({\bm v}^{n-1})]\partial_\xi({\bm v}^n-\bar{\bm v})\Big\}\Big|_{\eta=0}d\xi\\
	&\quad+\int_\bT\partial^\alpha({\bm v}^n-\bar{\bm v})^{\it T}\bm{S}({\bm v}^{n-1})
	[\partial^\beta,\bm{F}({\bm v}^{n-1},\partial_\eta {\bm v}^{n-1})]\partial_\eta {\bm v}^n\big|_{\eta=0}d\xi\\
	&\quad+\int_\bT\partial^\alpha({\bm v}^n-\bar{\bm v})^{\it T}\bm{S}({\bm v}^{n-1})
\partial^\beta\bm{r}\big|_{\eta=0}d\xi=:\sum_{i=1}^5I_i.
\end{aligned}\end{align}
Firstly, under the induction hypothesis $(u_1^{n-1},\partial_\eta q^{n-1})\big|_{\eta=0}=\mathbf{0}$ given in \eqref{induc-hypo}, and using $\phi(\eta)\equiv0$ for $\eta\leq1$, it follows that
\begin{align}\label{I1}
\begin{aligned}
	|I_1|	&\lesssim \big(1+\big\|({\bm v}^{n-1},\partial_\eta {\bm v}^{n-1})\big|_{\eta=0}\big\|_{L^\infty(\bT_\xi)}^3\big)
	\big\|\partial\partial^\beta ({\bm v}^n-\bar{\bm v})\big|_{\eta=0}\big\|_{L^2(\bT_\xi)}^2\\
	&\lesssim \big(1+\|({\bm v}^{n-1}-\bar{\bm v})(\tau)\|_{H^3(\Omega)}^3\big)\big\|({\bm v}^n-\bar{\bm v})(\tau)\big\|_{\mathcal{H}^k(\Omega)}
	\big\|\partial_\eta({\bm v}^n-\bar{\bm v})(\tau)\big\|_{\mathcal{H}^k(\Omega)},
\end{aligned}\end{align}
where $\displaystyle \partial:=\sum_{|\alpha|=1}\partial^\alpha$ and we have used \eqref{trace}.

Secondly, it is easy to get that for $I_2$,
\begin{align*}
	|I_{2}|&\leq\big\|S({\bm v}^{n-1})\big|_{\eta=0}\big\|_{L^\infty(\mathbb{T}_\xi)}
\big\|\partial^\alpha (\bm v^{n}-\bar{\bm v})\big|_{\eta=0}\big\|_{L^2(\mathbb{T}_\xi)}
	\big\|[\partial^\beta,\bm{B}({\bm v}^{n-1})] \partial_{\eta}^2 (\bm v^{n}-\bar{\bm v})\big|_{\eta=0}
\big\|_{L^2(\mathbb{T}_\xi)}.
\end{align*}
With the help of \eqref{trace}, one can obtain that
\begin{align}\label{moser-type}
\begin{aligned}
	&\big\|[\partial^\beta,\bm{B}({\bm v}^{n-1})]\partial_{\eta}^2({\bm v}^n-\bar{\bm v})\big|_{\eta=0}\big\|_{L^2(\bT_\xi)}\\
&\leq\sum_{1\leq|\beta'|\leq|\beta|-1}\Big\{C_{\beta}^{\beta'}\big\|\partial^{\beta'}\big(\bm{B}({\bm v}^{n-1})\big)\big|_{\eta=0}\big\|_{L^\infty(\bT_\xi)}
\cdot\big\|\partial_{\eta}^2\partial^{\beta-\beta'}({\bm v}^n-\bar{\bm v})\big|_{\eta=0}\big\|_{L^2(\bT_\xi)}\Big\}\\
&\qquad+\big\|\partial^\beta\big(\bm{B}({\bm v}^{n-1})\big)\big|_{\eta=0}\big\|_{L^2(\bT_\xi)}\cdot
\big\|\partial_{\eta}^2({\bm v}^n-\bar{\bm v}\big)\big|_{\eta=0}\|_{L^\infty(\bT_\xi)})\\
&\lesssim \big(1+\|({\bm v}^{n-1}-\bar{\bm v})(\tau)\|_{\mathcal{H}^k(\Omega)}^5\big)\cdot\Big(\big\|\partial_{\eta}^2({\bm v}^n-\bar{\bm v})(\tau)\big\|_{H^2(\Omega)}\\
&\qquad\qquad+\big\|\partial_{\eta}^2({\bm v}^n-\bar{\bm v})(\tau)\big\|_{\mathcal{H}^{k-2}(\Omega)}^\frac{1}{2}
\big\|\partial_{\eta}^3({\bm v}^n-\bar{\bm v})(\tau)\big\|_{\mathcal{H}^{k-2}(\Omega)}^\frac{1}{2}\Big)\\
&\lesssim \big(1+\|({\bm v}^{n-1}-\bar{\bm v})(\tau)\|^5_{\mathcal{H}^k(\Omega)}\big)\cdot\Big(\big\|({\bm v}^n-\bar{\bm v})(\tau)\big\|_{\mathcal{H}^k(\Omega)}\\
&\qquad\qquad +\big\|({\bm v}^n-\bar{\bm v})(\tau)\big\|_{\mathcal{H}^k(\Omega)}^{\frac{1}{2}}
\big\|\partial_\eta({\bm v}^n-\bar{\bm v})(\tau)\big\|_{\mathcal{H}^k(\Omega)}^{\frac{1}{2}}\Big).
\end{aligned}\end{align}
Thus, it follows that
\begin{align}\label{I2}
\begin{aligned}
		|I_{2}|&\lesssim \big(1+\|({\bm v}^{n-1}-\bar{\bm v})(\tau)\|_{H^2(\Omega)}^8\big)\big\|\partial^\alpha({\bm v}^n-\bar{\bm v})(\tau,\cdot)\big\|_{L^2(\Omega)}^\frac{1}{2}
	\big\|\partial_{\eta}\partial^\alpha({\bm v}^n-\bar{\bm v})(\tau,\cdot)\big\|_{L^2(\Omega)}^\frac{1}{2}\\
	&\quad\cdot 
	\Big(\big\|({\bm v}^n-\bar{\bm v})(\tau)\|_{\mathcal{H}^k(\Omega)}+\big\|({\bm v}^n-\bar{\bm v})(\tau)\big\|_{\mathcal{H}^k(\Omega)}^\frac{1}{2}
	\big\|\partial_\eta({\bm v}^n-\bar{\bm v})(\tau)\big\|_{\mathcal{H}^k(\Omega)}^\frac{1}{2}\Big)\\
	&\lesssim \big(1+\|({\bm v}^{n-1}-\bar{\bm v})(\tau)\|^8_{\mathcal{H}^k(\Omega)}\big)\big\|({\bm v}^n-\bar{\bm v})(\tau)\big\|_{\mathcal{H}^k(\Omega)}\\
	&\quad\cdot\big(\big\|({\bm v}^n-\bar{\bm v})(\tau)\big\|_{\mathcal{H}^k(\Omega)}+
	\big\|\partial_\eta({\bm v}^n-\bar{\bm v})(\tau)\big\|_{\mathcal{H}^k(\Omega)}\big).
\end{aligned}
\end{align}
Similarly, it holds that
\begin{align}\label{I3}
|I_{3}|&\lesssim \big(1+\|({\bm v}^{n-1}-\bar{\bm v})(\tau)\|_{\mathcal{H}^k(\Omega)}^6\big)
	\big\|({\bm v}^n-\bar{\bm v})(\tau)\big\|_{\mathcal{H}^k(\Omega)}\big\|\partial_\eta({\bm v}^n-\bar{\bm v})(\tau)\big\|_{\mathcal{H}^k(\Omega)}.
\end{align}

Thirdly, to estimate $I_4$, note that $\partial^\alpha=\partial_\eta\partial^\beta,$
\begin{align}\label{div-F}
	\bm{S}({\bm v}^{n-1})[\partial^\beta,\bm{F}({\bm v}^{n-1},\partial_\eta {\bm v}^{n-1})]\partial_\eta {\bm v}^n
	-[\bm{SF}]({\bm v}^{n-1}, \partial^\alpha {\bm v}^{n-1})\partial_\eta {\bm v}^n=:\mathcal{R},
\end{align}
where the remainder $\mathcal{R}$ is a linear combination of
\begin{align*}
	[\bm{SF}]({\bm v}^{n-1},\partial_\eta \partial^{\beta'} {\bm v}^{n-1})
	\partial_\eta\partial^{\beta-\beta'}{\bm v}^n,\quad \beta'\leq\beta,~1\leq|\beta'|\leq|\beta|-1,
\end{align*}
and
\begin{align*}
&\bm{S}({\bm v}^{n-1})
\tilde{\bm{F}}(\partial^{\beta_1}{\bm v}^{n-1},\partial_\eta\partial^{\beta_2} {\bm v}^{n-1})\partial_\eta\partial^{\beta_3}{\bm v}^n,\\
&\quad \beta_1+\beta_2+\beta_3<\beta,~or~\beta_1+\beta_2+\beta_3=\beta~\mbox{with}~\beta_1\geq1,
\end{align*}
for some $\tilde{\bm{F}}(\cdot,\cdot)$ defined similar to $\bm{F}(\cdot,\cdot)$. On the one hand,
for $\beta'\leq\beta, 1\leq |\beta'|\leq|\beta|-1$, similar as \eqref{moser-type} one has
\begin{align*}
	&\Big|\int_\bT\partial^\alpha({\bm v}^n-\bar{\bm v})^{\it T}[\bm{SF}]({\bm v}^{n-1},\partial_\eta\partial^{\beta'} {\bm v}^{n-1})\partial_\eta\partial^{\beta-\beta'} {\bm v}^n|_{\eta=0}d\xi\Big|\nonumber\\
	&\lesssim \big\|\partial^\alpha({\bm v}^n-\bar{\bm v})(\tau,\cdot)\big\|_{L^2(\Omega)}^\frac{1}{2}
	\big\|\partial_{\eta}\partial^\alpha({\bm v}^n-\bar{\bm v})(\tau,\cdot)\big\|_{L^2(\Omega)}^\frac{1}{2}\\
&\quad\cdot	\big(1+\|({\bm v}^{n-1}-\bar{\bm v})(\tau,\cdot)\|_{L^\infty(\Omega)}^3\big)\big(1+\|({\bm v}^{n-1}-\bar{\bm v})(\tau)\|_{\mathcal{H}^k(\Omega)}\big)\\
&\qquad\cdot \big\|\partial_\eta({\bm v}^n-\bar{\bm v})(\tau,\cdot)\big\|_{\mathcal{H}^{k-2}(\Omega)}^\frac{1}{2}
	\big\|\partial_{\eta}^2({\bm v}^n-\bar{\bm v})(\tau,\cdot)\big\|_{\mathcal{H}^{k-2}(\Omega)}^\frac{1}{2}\\
	&\lesssim \big(1+\|({\bm v}^{n-1}-\bar{\bm v})(\tau)\|_{\mathcal{H}^k(\Omega)}^4\big)\big\|({\bm v}^n-\bar{\bm v})(\tau)\big\|_{\mathcal{H}^k(\Omega)}\big\|\partial_\eta({\bm v}^n-\bar{\bm v})(\tau)\big\|_{\mathcal{H}^k(\Omega)}.
\end{align*}
On the other hand, it also holds
\begin{align*}
	&\Big|\int_\bT\partial^\alpha({\bm v}^n-\bar{\bm v})^{\it T}{[\bm{S}\tilde{\bm F}]}(\partial^{\beta_1}{\bm v}^{n-1},\partial_\eta\partial^{\beta_2} {\bm v}^{n-1})\partial_\eta\partial^{\beta_3} {\bm v}^n\big|_{\eta=0}d\xi\Big|\nonumber\\
	&\lesssim \big\|\partial^\alpha({\bm v}^n-\bar{\bm v})(\tau,\cdot)\big\|_{L^2(\Omega)}^\frac{1}{2}
	\big\|\partial_{\eta}\partial^\alpha({\bm v}^n-\bar{\bm v})(\tau,\cdot)\big\|_{L^2(\Omega)}^\frac{1}{2}\\
	&\quad\cdot\big(1+\|({\bm v}^{n-1}-\bar{\bm v})(\tau,\cdot)\|_{L^\infty(\Omega)}^3\big)\big(1+\|({\bm v}^{n-1}-\bar{\bm v})(\tau)\|_{\mathcal{H}^k(\Omega)}^6\big)\\
	&\qquad\cdot \big\|\partial_\eta({\bm v}^n-\bar{\bm v})(\tau,\cdot)\big\|_{\mathcal{H}^{k-2}(\Omega)}^\frac{1}{2}
	\big\|\partial_{\eta}^2({\bm v}^n-\bar{\bm v})(\tau,\cdot)\big\|_{\mathcal{H}^{k-2}(\Omega)}^\frac{1}{2}\\
	&\lesssim \big(1+\|({\bm v}^{n-1}-\bar{\bm v})(\tau)\|_{\mathcal{H}^k(\Omega)}^{9}\big)\big\|({\bm v}^n-\bar{\bm v})(\tau)\big\|_{\mathcal{H}^k(\Omega)}\big\|\partial_\eta({\bm v}^n-\bar{\bm v})(\tau)\big\|_{\mathcal{H}^k(\Omega)}.
\end{align*}
Then gethering \eqref{div-F} with the above two inequalities yields
\begin{align}\label{I4}
\begin{aligned}
	&\Big|I_4-\int_\bT\partial^\alpha({\bm v}^n-\bar{\bm v})^{\it T}
	[\bm{SF}]({\bm v}^{n-1},\partial^\alpha {\bm v}^{n-1})\partial_\eta {\bm v}^n\big|_{\eta=0}d\xi\Big|\\
	&\lesssim \big(1+\|({\bm v}^{n-1}-\bar{\bm v})(\tau)\|_{\mathcal{H}^k(\Omega)}^{9}\big)
\big\|({\bm v}^n-\bar{\bm v})(\tau)\big\|_{\mathcal{H}^k(\Omega)}\big\|\partial_\eta({\bm v}^n-\bar{\bm v})(\tau)\big\|_{\mathcal{H}^k(\Omega)}.
\end{aligned}\end{align}

Finally, it is straightforward to show that
\begin{align}\label{I5}
\begin{aligned}
	|I_5|&\lesssim \big\|\partial^\alpha({\bm v}^n-\bar{\bm v})(\tau,\cdot)\big\|_{L^2(\Omega)}^\frac{1}{2}
	\big\|\partial_{\eta}\partial^\alpha({\bm v}^n-\bar{\bm v})(\tau,\cdot)\big\|_{L^2(\Omega)}^\frac{1}{2}\\&\quad\cdot\big(1+\|({\bm v}^{n-1}-\bar{\bm v})(\tau,\cdot)\|_{L^\infty(\Omega)}^3\big)\big(1+\|({\bm v}^{n-1}-\bar{\bm v})(\tau)\|_{\mathcal{H}^k(\Omega)}^5\big)\\
	&\lesssim \big(1+\|({\bm v}^{n-1}-\bar{\bm v})(\tau)\|_{\mathcal{H}^k(\Omega)}^8\big)\big(1+\|({\bm v}^n-\bar{\bm v})(\tau)\|_{\mathcal{H}^k(\Omega)}\big)
	\big\|\partial_\eta({\bm v}^n-\bar{\bm v})(\tau)\big\|_{\mathcal{H}^k(\Omega)}.
\end{aligned}\end{align}
Therefore, substituting the above estimates \eqref{I1}, \eqref{I2}, \eqref{I3}, \eqref{I4} and \eqref{I5} for $I_i,\ 1\leq i\leq 5$ respectively, into \eqref{def-I} leads to that for $\partial^\alpha=\partial_\eta\partial^\beta$,
\begin{align*}
\begin{aligned}
&\Big|I-\int_\bT\partial^\alpha({\bm v}^n-\bar{\bm v})^{\it T}[\bm{SF}]({\bm v}^{n-1},\partial^\alpha {\bm v}^{n-1})\partial_\eta {\bm v}^n|_{\eta=0}d\xi\Big|\\
	&\lesssim \big(1+\|({\bm v}^{n-1}-\bar{\bm v})(\tau)\|_{\mathcal{H}^k(\Omega)}^{9}\big)\big(1+\|({\bm v}^n-\bar{\bm v})(\tau)\|_{\mathcal{H}^k(\Omega)}\big)
	\big\|\partial_\eta({\bm v}^n-\bar{\bm v})(\tau)\big\|_{\mathcal{H}^k(\Omega)},
\end{aligned}\end{align*}
which eventually gives the proof of Claim \ref{claim1}.

\end{proof}

Next, we prove Claim \ref{claim2} as follows.
\begin{proof}[Proof of Claim \ref{claim2}]
	Similar to \eqref{div-F}, one has
	\begin{align}\label{div-F1}
		[\partial^\alpha,\bm{F}({\bm v}^{n-1},\partial_\eta {\bm v}^{n-1})]\partial_\eta {\bm v}^n
		=\bm{F}({\bm v}^{n-1},\partial_\eta\partial^\alpha {\bm v}^{n-1})\partial_\eta {\bm v}^n+\mathcal{R}_1,
	\end{align}
	where the remainder $\mathcal{R}_1$ is a linear combination of
	\begin{align*}
		\bm{F}({\bm v}^{n-1},\partial_\eta \partial^{\alpha'} {\bm v}^{n-1})
		\partial_\eta\partial^{\alpha-\alpha'}{\bm v}^n,\quad \alpha'\leq\alpha,~1\leq|\alpha'|\leq|\alpha|-1,
	\end{align*}
	and
\begin{align*}
&\tilde{\bm{F}}(\partial^{\alpha_1}{\bm v}^{n-1},\partial_\eta\partial^{\alpha_2}{\bm v}^{n-1})\partial_\eta\partial^{\alpha_3}{\bm v}^n,\\
&\quad\alpha_1+\alpha_2+\alpha_3<\alpha,~or~\alpha_1+\alpha_2+\alpha_3=\alpha~\mbox{with}~\alpha_1\geq1.
\end{align*}
To estimate $\mathcal{R}_1$, on the one hand, by \eqref{moser0} we can claim that for $\alpha'\leq\alpha$ and $1\leq|\alpha'|\leq|\alpha|-1,$
\begin{align*}
&\Big|\left\langle\partial^\alpha({\bm v}^n-\bar{\bm v})^{\it T}\bm{S}({\bm v}^{n-1}),
\bm{F}({\bm v}^{n-1},\partial_\eta\partial^{\alpha'} {\bm v}^{n-1})\partial_\eta\partial^{\alpha-\alpha'}
{\bm v}^n\right\rangle(\tau)\Big|\nonumber\\
&\lesssim \big\|\partial^\alpha({\bm v}^n-\bar{\bm v})(\tau,\cdot)\big\|_{L^2(\Omega)}
\big(1+\|({\bm v}^{n-1}-\bar{\bm v})(\tau)\|_{\mathcal{H}^k(\Omega)}^6\big)\big(1+\|\partial_\eta({\bm v}^n-\bar{\bm v})(\tau)\|_{\mathcal{H}^{k-1}(\Omega)}\big).
\end{align*}
On the other hand, for $\alpha_1+\alpha_2+\alpha_3<\alpha$ or $\alpha_1+\alpha_2+\alpha_3=\alpha~\mbox{with}~\alpha_1\geq1$,
\begin{align*}
&\Big|\left\langle\partial^\alpha({\bm v}^n-\bar{\bm v})^{\it T}\bm{S}({\bm v}^{n-1}),
{\tilde{\bm F}}(\partial^{\alpha_1}{\bm v}^{n-1},\partial_\eta\partial^{\alpha_2} {\bm v}^{n-1})\partial_\eta\partial^{\alpha_3} {\bm v}^n\right\rangle(\tau)\Big|\nonumber\\
&\lesssim \big\|\partial^\alpha({\bm v}^n-\bar{\bm v})(\tau,\cdot)\|_{L^2(\Omega)}
\big(1+\|({\bm v}^{n-1}-\bar{\bm v})(\tau)\|_{\mathcal{H}^k(\Omega)}^9\big)\big(1+\|\partial_\eta({\bm v}^n-\bar{\bm v})(\tau)\|_{\mathcal{H}^{k-1}(\Omega)}\big).
\end{align*}
Thus, combining with the above two inequalities yields
\begin{align}\label{R-est}
\begin{aligned}
&\big|\left\langle\partial^\alpha({\bm v}^n-\bar{\bm v})^{\it T}\bm{S}({\bm v}^{n-1}),\mathcal{R}_1\right\rangle(\tau)\big|\\
&\lesssim \big(1+\|({\bm v}^{n-1}-\bar{\bm v})(\tau)\|_{\mathcal{H}^k(\Omega)}^9\big)\big\|({\bm v}^n-\bar{\bm v})(\tau)\big\|_{\mathcal{H}^k(\Omega)}
\big(1+\|({\bm v}^n-\bar{\bm v})(\tau)\|_{\mathcal{H}^k(\Omega)}\big).
\end{aligned}\end{align}

Next, to deal with the terms involving $\partial_\eta\partial^\alpha {\bm v}^{n-1}$ in the first part of \eqref{div-F1}, one has that by integration by parts,
\begin{align*}
&\left\langle\partial^\alpha({\bm v}^n-\bar{\bm v})^{\it T}\bm{S}({\bm v}^{n-1}),
\bm{F}({\bm v}^{n-1},\partial_\eta\partial^\alpha {\bm v}^{n-1})\partial_\eta {\bm v}^n\right\rangle(\tau)\nonumber\\
&=-\int_\bT\partial^\alpha({\bm v}^n-\bar{\bm v})^{\it T}[\bm{SF}]({\bm v}^{n-1}, \partial^\alpha {\bm v}^{n-1})\partial_\eta {\bm v}^n\big|_{\eta=0}d\xi\nonumber\\
&\quad-\left\langle\partial_\eta\partial^\alpha({\bm v}^n-\bar{\bm v})^{\it T}\bm{S}({\bm v}^{n-1}),
\bm{F}({\bm v}^{n-1}, \partial^\alpha {\bm v}^{n-1})\partial_\eta {\bm v}^n\right\rangle(\tau)\nonumber\\
&\quad-\left\langle\partial^\alpha({\bm v}^n-\bar{\bm v})^{\it T}\bm{S}({\bm v}^{n-1}), \bm{F}({\bm v}^{n-1},
\partial^\alpha {\bm v}^{n-1})\partial_\eta^2 {\bm v}^n\right\rangle(\tau)\nonumber\\
&\quad-\left\langle\partial^\alpha({\bm v}^n-\bar{\bm v})^{\it T},
{\hat{\bm F}}({\bm v}^{n-1}, \partial_\eta {\bm v}^{n-1}, \partial^\alpha {\bm v}^{n-1})\partial_\eta {\bm v}^n\right\rangle(\tau),
\end{align*}
where ${\hat{\bm F}}$ is a matrix involving ${\bm v}^{n-1}, \partial_\eta {\bm v}^{n-1}, \partial^\alpha {\bm v}^{n-1}$, and is quadratic with respect to $\partial_\eta {\bm v}^{n-1}$ and  $\partial^\alpha {\bm v}^{n-1}$. Then, it follows that for $k\geq4,$
\begin{align}\label{J1}
\begin{aligned}
&\Big|\left\langle\partial^\alpha({\bm v}^n-\bar{\bm v})^{\it T}\bm{S}({\bm v}^{n-1}), \bm{F}({\bm v}^{n-1},
\partial_\eta\partial^\alpha {\bm v}^{n-1})\partial_\eta {\bm v}^n\right\rangle(\tau)\\
	&\quad+\int_\bT\partial^\alpha({\bm v}^n-\bar{\bm v})^{\it T}[\bm{SF}]({\bm v}^{n-1}, \partial^\alpha {\bm v}^{n-1})\partial_\eta {\bm v}^n\big|_{\eta=0}d\xi\Big|\\
&\lesssim \big(1+\|({\bm v}^{n-1}-\bar{\bm v})(\tau)\|_{H^3(\Omega)}^4\big)
\big(1+\|\partial^\alpha ({\bm v}^{n-1}-\bar{\bm v})(\tau,\cdot)\|_{L^2(\Omega)}\big)\\
	&\quad\cdot
	\Big[	\|\partial_\eta\partial^\alpha({\bm v}^n-\bar{\bm v})(\tau,\cdot)\|_{L^2(\Omega)}\|\partial_\eta {\bm v}^n(\tau,\cdot)\|_{L^\infty(\Omega)}\\
	&\quad\quad+\|\partial^\alpha({\bm v}^n-\bar{\bm v})(\tau,\cdot)\|_{L^2(\Omega)}
	\big(\|\partial_\eta {\bm v}^n(\tau,\cdot)\|_{L^\infty(\Omega)}+\|\partial_{\eta}^2 {\bm v}^n(\tau,\cdot)\|_{L^\infty(\Omega)}\big)\Big]\\
&\lesssim \big(1+\|({\bm v}^{n-1}-\bar{\bm v})(\tau)\|_{\mathcal{H}^k(\Omega)}^5\big)\big(1+\|({\bm v}^n-\bar{\bm v})(\tau)\|_{\mathcal{H}^k(\Omega)}\big)\\
	&\qquad\cdot\big(\|({\bm v}^n-\bar{\bm v})(\tau)\|_{\mathcal{H}^k(\Omega)}
+\|\partial_\eta({\bm v}^n-\bar{\bm v})(\tau)\|_{\mathcal{H}^k(\Omega)}\big).
\end{aligned}\end{align}
Thus, combining \eqref{R-est} with \eqref{J1} and using \eqref{div-F1} we obtain Claim \ref{claim2}.
\end{proof}

To complete the construction of $n-$th order approximate solution $\bm v^n(\tau,\xi,\eta), n\geq1$, we need to check that such approximation $\bm v^n=(u_1^n, \theta^n,q^n)^{\it T}(\tau,\xi,\eta)$ solved from \eqref{picard} satisfies the corresponding bounded condition to $\eqref{induc-hypo}_3$, i.e.,
\[
\theta^n(\tau,\xi,\eta)\geq\delta,\quad \delta\leq q^n(\tau,\xi,\eta)\leq P(\tau,\xi)-\delta.
\]
In fact, we are able to obtain a stronger result as follows.
\begin{prop}\label{uniform-bound}
Under the assumptions of Theorem \ref{classical-sol}, 
there exist some $T_1\in(0,T_0]$ and some positive constant $M$ such that the approximate solution sequence
$$\{{\bm v}^n\}_{n\geq0}=\big\{(u_1^n,\theta^n,q^n)(\tau,\xi,\eta)\big\}_{n\geq0}$$
constructed above satisfies 
\begin{equation}\label{nth-bound}
\|{\bm v}^n-\bar{\bm v}\|_{\mathcal{H}^k(\Omega_{T_1})}\leq M,\quad \forall n\geq0,
\end{equation}
and
\begin{equation}\label{nth-lowbound}
\theta^n(\tau,\xi,\eta)\geq\delta,\quad \delta\leq q^n(\tau,\xi,\eta)\leq P(\tau,\xi)-\delta,\quad \forall (\tau,\xi,\eta)\in\Omega_{T_1}.
\end{equation}
\end{prop}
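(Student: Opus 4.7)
The plan is a standard Picard/induction-closure scheme on $n$, executed with a single choice of a radius $M$ and a lifespan $T_1\in(0,T_0]$ picked at the outset. For the base case $n=0$ the pointwise bounds \eqref{nth-lowbound} are exactly the content of Proposition~\ref{zero-appro}, while the Sobolev bound is read off from the computation \eqref{est_V0}, which gives $\|\bm v^0-\bar{\bm v}\|_{\mathcal{H}^k(\Omega_{T_0})}\lesssim (1+T_0^{k/2})\,M_1^{1/2}$. We enlarge $M$ once and for all so that this initial norm does not exceed $M$.

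For the inductive step, assume both \eqref{nth-bound} and \eqref{nth-lowbound} hold at level $n-1$. The pointwise assumption on $(\theta^{n-1},q^{n-1})$ is precisely the hypothesis $\eqref{induc-hypo}_3$ under which Lemma~\ref{energy-est} is valid, so \eqref{gronwall-general} together with $\|\bm v^{n-1}-\bar{\bm v}\|_{\mathcal{H}^k(\Omega_{T_1})}\leq M$ yields
\[
\|\bm v^n-\bar{\bm v}\|^2_{\mathcal{H}^k(\Omega_{T_1})}+\int_0^{T_1}\|\partial_\eta(\bm v^n-\bar{\bm v})(\tau)\|^2_{\mathcal{H}^k(\Omega)}\,d\tau \leq C_0 M_1^3\,\exp\bigl(C_0 T_1 M^{18}\bigr).
\]
Fix $M^2=2C_0 M_1^3$ (enlarged if needed to absorb the $n=0$ case), then impose $T_1\leq \ln 2/(C_0 M^{18})$; this forces the exponential factor to be at most $2$, hence $\|\bm v^n-\bar{\bm v}\|_{\mathcal{H}^k(\Omega_{T_1})}^2\leq M^2$, closing \eqref{nth-bound} at level $n$.

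To recover the pointwise bounds \eqref{nth-lowbound} at level $n$, I would use the identity
\[
\theta^n(\tau,\xi,\eta)-\theta_0(\xi,\eta)=\int_0^\tau \partial_\tau\theta^n(s,\xi,\eta)\,ds,
\]
together with the 2D Sobolev embedding $H^{k-1}(\Omega)\hookrightarrow L^\infty(\Omega)$ (valid as $k\geq 4$) applied slicewise in $\tau$, the just-established bound $\|\bm v^n-\bar{\bm v}\|_{\mathcal{H}^k(\Omega_{T_1})}\leq M$, and the control of $\bar{\bm v}$ from \eqref{est-V}; this gives $\|\partial_\tau\theta^n\|_{L^\infty(\Omega_{T_1})}\lesssim M+M_1$. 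Combined with $\theta_0\geq 2\delta$ one has $\theta^n\geq 2\delta-CT_1(M+M_1)\geq \delta$ provided $T_1$ is small enough. The same argument applied to $q^n-\tfrac12 h_{1,0}^2$ and to $(P(\tau,\xi)-q^n)-(P(0,\xi)-\tfrac12 h_{1,0}^2)$, with the drift of $P$ estimated via $\|\partial_\tau P\|_{L^\infty_{\tau,\xi}}\lesssim M_e$ from \eqref{outer-bound}, yields $\delta\leq q^n\leq P-\delta$ for $T_1$ still smaller. Take $T_1$ as the minimum of all these thresholds.

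The main obstacle is the coupling between the two closures: the very large exponent $M^{18}$ in Lemma~\ref{energy-est} means that once $M$ is fixed (from the data via $M_1$), the time $T_1$ must be chosen \emph{after} $M$, and enlarging $M$ to gain headroom in the Sobolev closure simultaneously tightens the smallness requirement for the pointwise closure. The argument works only because the choice $M\sim M_1^{3/2}$ comes from the data alone and both constraints on $T_1$ are polynomial/exponential in $M$, so a common $T_1$ exists depending only on $M_e,M_0,\delta$ and the parameters of the system.
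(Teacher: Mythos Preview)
Your proof is correct and follows the same overall scheme as the paper: close the induction via Lemma~\ref{energy-est}, then recover the pointwise bounds by integrating $\partial_\tau$ in time and invoking Sobolev embedding. The only noticeable difference is in how the $\mathcal{H}^k$--closure is executed. You fix a constant radius $M^2=2C_0M_1^3$ and then choose $T_1\leq (\ln 2)/(C_0 M^{18})$ so that $C_0M_1^3 e^{C_0T_1M^{18}}\leq M^2$. The paper instead propagates a time-dependent bound
\[
\|{\bm v}^{n-1}-\bar{\bm v}\|^2_{\mathcal{H}^k(\Omega_t)}\leq \frac{C_0M_1^3}{(1-9C_0^{10}M_1^{27}\,t)^{1/9}},
\]
which is self-reproducing under \eqref{gronwall-general} (the integral of the ninth power of the right-hand side exponentiates exactly back to it), and only afterwards reads off a constant $M$ at the chosen $T_1$. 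Your version is the more elementary and standard closure; the paper's gives an explicit blow-up profile and a slightly sharper description of the admissible lifespan, but both lead to the same conclusion with constants depending only on $M_e,M_0,\delta$.
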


\begin{proof}
Firstly, it follows from \eqref{est_V0} that \eqref{nth-bound} holds for $n=0$ provided $M$ being sufficiently large.
Let us fix a $\displaystyle \widetilde{T}_1=\min\big\{T_0,\frac{1}{9C_0^{10}M_1^{27}}\big\}$ with the positive constant $C_0$ given in \eqref{gronwall-general}, and suppose that for $n\geq1,$
\begin{equation}\label{asmp-n-1}
\|{\bm v}^{n-1}-\bar{\bm v}\|^2_{\mathcal{H}^k(\Omega_t)}
\leq\frac{C_0M_1^3}{\big(1-9C_0^{10}M_1^{27}~t\big)^{\frac{1}{9}}},
\quad \forall\ t\in[0,\widetilde{T}_1).
\end{equation}
It is easy to know (\ref{asmp-n-1}) holds for the zero-th order approximate solution $\bm v^0$ provided $C_0$ being large.
Recalling \eqref{gronwall-general}, one can obtain that for $t\in[0,\widetilde T_1)$,
\begin{align}\label{induce-est}
\begin{aligned}
	\|\bm v^{n}-\bar{\bm v}\|^2_{\mathcal{H}^k(\Omega_t)}&\leq C_0M_1^3\exp\Big\{C_0\int_0^t\frac{C_0^9M_1^{27}}{\big(1-9C_0^{10}M_1^{27}~\tau\big)^{\frac{1}{9}}}d\tau\Big\}\\
	&\leq\frac{C_0M_1^3}{\big(1-9C_0^{10}M_1^{27}~t\big)^{\frac{1}{9}}}.
\end{aligned}\end{align}
On the other hand, under the assumption (\ref{outer-bound}) and (\ref{initial-lowbound}), it follows from (\ref{induce-est}) that
\begin{align*}
\theta^n(t,\xi,\eta)&=\theta^n_0(\xi,\eta)+\int^t_0\partial_\tau\theta^n(\tau,\xi,\eta)d\tau\geq2\delta-t~\|\partial_\tau \theta^n\|_{L^\infty(\Omega_t)}\\
&\geq2\delta-t\Big(\|{\bm v}^n-\bar{\bm v}\|_{\mathcal{H}^3(\Omega_t)}+\sup_{0\leq\tau\leq t}\|(\Theta,\theta^\ast)(\tau,\cdot)\|_{H^2(\bT_\xi)}\Big)\\
&\geq2\delta-2t\sqrt{C_0M_1^3}\big(1-9C_0^{10}M_1^{27}~t\big)^{-\frac{1}{18}},\quad \forall t\in[0,\widetilde{T}_1).
\end{align*}
Similar estimates hold for $q^n$ and $P-q^n$. Thus, choosing $T_1\in(0,\widetilde{T}_1)$
such that
\begin{align}\label{def-T1}
2T_{1}\sqrt{C_0M_1^3}\big(1-9C_0^{10}M_1^{27}~T_1\big)^{-\frac{1}{18}}\leq\delta,
\end{align}
then it implies that
\begin{equation*}
\theta^n(t,\xi,\eta)\geq\delta,\quad \delta\leq q^n(t,\xi,\eta)\leq P(\tau,\xi)-\delta.
\end{equation*}
Eventually, for such $T_1$ satisfying \eqref{def-T1}, let 
$$M=\sqrt{C_0M_1^3}\big(1-9C_0^{10}M_1^{27}~T_1\big)^{-\frac{1}{18}},$$  
 we can get the proof of the proposition.
\end{proof}

It follows from Proposition~\ref{uniform-bound} that, for the approximation solutions $\{{\bm v}^n\}_{n\geq0}$ to the problem (\ref{picard}) constructed above,
$\{{\bm v}^n-\bar{\bm v}\}_{n\geq 0}$ are uniform bounded in $\mathcal{H}^k(\Omega_{T_1}), k\geq4.$ 
Next, we will show the compactness of sequence $\{{\bm v}^n\}_{n\geq0}$. Indeed, it holds that
\begin{prop}\label{Vn-Vn-1}
There exists some $T_2\in(0,T_1]$ such that for $t\in[0,T_2],$
\begin{equation*}
\sup_{0\leq \tau \leq t}\big\|(\bm v^{n+1}-{\bm v}^n)(\tau,\cdot)\big\|_{L^2(\Omega)}
\leq\frac{1}{2}\sup_{0\leq \tau\leq t}\big\|({\bm v}^n-{\bm v}^{n-1})(\tau,\cdot)\big\|_{L^2(\Omega)}, \quad n\geq1.
\end{equation*}
\end{prop}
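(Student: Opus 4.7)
The plan is to let $\bm w^n := \bm v^{n+1}-\bm v^n$ and perform a (weighted) $L^2$ energy estimate. Subtracting the equations in \eqref{picard} for $\bm v^{n+1}$ and $\bm v^n$, decomposing each coefficient difference as e.g.\ $\bm A(\bm v^n)\partial_\xi\bm v^{n+1}-\bm A(\bm v^{n-1})\partial_\xi\bm v^n = \bm A(\bm v^n)\partial_\xi\bm w^n +[\bm A(\bm v^n)-\bm A(\bm v^{n-1})]\partial_\xi\bm v^n$, one obtains the linear parabolic system
\[
\partial_\tau\bm w^n+\bm A(\bm v^n)\partial_\xi\bm w^n+\bm F(\bm v^n,\partial_\eta\bm v^n)\partial_\eta\bm w^n+\bm G(\bm v^n)\bm w^n-\bm B(\bm v^n)\partial_\eta^2\bm w^n=\mathcal{R}^{n-1},
\]
with $\bm w^n|_{\tau=0}=\bm 0$, $(w_1^n,w_\vartheta^n,\partial_\eta w_q^n)|_{\eta=0}=\bm 0$, and $\bm w^n\to\bm 0$ as $\eta\to\infty$. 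The source $\mathcal R^{n-1}$ collects the four commutator-type differences of $\bm A,\bm B,\bm G$ and $\bm F$. Using the structure \eqref{F}, the $\bm F$-difference further splits as $[\bm F(\bm v^n,\partial_\eta\bm v^n)-\bm F(\bm v^{n-1},\partial_\eta\bm v^n)]+[\bm F(\bm v^{n-1},\partial_\eta\bm v^n)-\bm F(\bm v^{n-1},\partial_\eta\bm v^{n-1})]$, which (by the uniform bounds of Proposition~\ref{uniform-bound} for $k\ge 4$) gives pointwise control by (bounded)$\cdot\bm w^{n-1}$ from the first three differences plus the ``Lipschitz-in-$\bm v$'' piece, and exactly one more contribution of the form $\bm M(\bm v^{n-1},\partial_\eta\bm v^n)\,\partial_\eta\bm w^{n-1}$ coming from the last piece.

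Taking the inner product with $\bm w^n{}^T\bm S(\bm v^n)$, exactly as in the $|\alpha|=0$ case of Lemma~\ref{energy-est}, the symmetry of $[\bm{SA}]$ and positivity of $[\bm{SB}]$ in \eqref{SA}--\eqref{SB} yield
\[
\tfrac{1}{2}\tfrac{d}{d\tau}\langle\bm w^n,\bm S(\bm v^n)\bm w^n\rangle+c_\delta\|\partial_\eta\bm w^n\|_{L^2}^2\le C\|\bm w^n\|_{L^2}^2+|\langle\bm w^n{}^T\bm S(\bm v^n),\mathcal R^{n-1}\rangle|,
\]
where the diffusion boundary term at $\eta=0$ vanishes identically (as in \eqref{I=0}) because $[\bm{SB}]$ is diagonal and $(w^n_1,w^n_\vartheta,\partial_\eta w^n_q)|_{\eta=0}=\bm 0$. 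The ``regular'' pieces of $\mathcal R^{n-1}$ are bounded by $C\|\bm w^n\|_{L^2}\|\bm w^{n-1}\|_{L^2}$ through the $L^\infty$ estimates $\|\partial_\xi\bm v^n\|_{L^\infty},\|\partial_\eta^2\bm v^n\|_{L^\infty}\lesssim M$ implied by $\mathcal H^k\hookrightarrow L^\infty$ for $k\ge 4$.

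The delicate term is $\int\bm w^n{}^T\bm S(\bm v^n)\bm M\,\partial_\eta\bm w^{n-1}\,d\xi d\eta$. I integrate by parts in $\eta$: the interior integral becomes $-\int\partial_\eta[\bm w^n{}^T\bm S\bm M]\,\bm w^{n-1}$, which yields a term $C\|\partial_\eta\bm w^n\|_{L^2}\|\bm w^{n-1}\|_{L^2}$ (absorbable into $c_\delta\|\partial_\eta\bm w^n\|^2$ via Young) plus $C\|\bm w^n\|_{L^2}\|\bm w^{n-1}\|_{L^2}$. For the boundary term at $\eta=0$, I mimic the component-wise analysis of Claim~\ref{claim1}: writing out $\bm S\bm M$ entry by entry from \eqref{F}, the facts $(w^n_1,w^n_\vartheta,w^{n-1}_1,w^{n-1}_\vartheta)|_{\eta=0}=\bm 0$ and $\partial_\eta q^n|_{\eta=0}=\partial_\eta q^{n-1}|_{\eta=0}=0$ kill most entries, and what survives reduces to an integral of the form $\int_{\bT}w^n_q\,w^{n-1}_q\,|_{\eta=0}\,d\xi$ against coefficients bounded uniformly in $n$. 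Applying the trace inequality \eqref{trace} to both factors, together with the uniform bound $\|\partial_\eta\bm w^{n-1}\|_{L^2}\le 2M$ from Proposition~\ref{uniform-bound} and Young's inequality, estimates this boundary contribution by $\epsilon\|\partial_\eta\bm w^n\|_{L^2}^2+C_M\|\bm w^n\|_{L^2}\|\bm w^{n-1}\|_{L^2}$.

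Assembling everything and applying Gronwall's inequality in time (noting $\bm w^n|_{\tau=0}=\bm 0$), one obtains an estimate of the form
\[
\sup_{[0,t]}\|\bm w^n\|_{L^2}^2+\int_0^t\|\partial_\eta\bm w^n\|_{L^2}^2\,d\tau\le C(t,M)\sup_{[0,t]}\|\bm w^{n-1}\|_{L^2}^2,
\]
where $C(t,M)\to 0$ as $t\to 0$. Choosing $T_2\in(0,T_1]$ so that $C(T_2,M)\le 1/4$ gives the assertion. I expect the main technical obstacle to be the boundary analysis at $\eta=0$ for the $\partial_\eta\bm w^{n-1}$ source: unlike the analogous situation in Claim~\ref{claim1}, the $q$-components of $\bm w^n$ and $\bm w^{n-1}$ do not vanish at $\eta=0$, so the cancellation is not automatic and must be extracted by carefully exploiting the specific entries of $\bm F$ in \eqref{F}, the Neumann condition $\partial_\eta q|_{\eta=0}=0$, and the trace inequality in order to avoid an uncontrolled factor of $\|\partial_\eta\bm w^{n-1}\|_{L^2}$ on the right-hand side.
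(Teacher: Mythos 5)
Your overall strategy mirrors the paper's (subtract the iterated equations, test with $\bm S(\bm v^n)$, integrate by parts on the $\partial_\eta\bm w^{n-1}$ source, then Gronwall), but there is a genuine gap in the one step you flag as delicate, and your proposed patch does not close it.

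The problem is your choice of decomposition of the $\bm F$-difference. You split $\bm F(\bm v^n,\partial_\eta\bm v^n)-\bm F(\bm v^{n-1},\partial_\eta\bm v^{n-1})$ so as to isolate $\bm F(\bm v^{n-1},\partial_\eta\bm w^{n-1})$, leaving the weight $\bm S(\bm v^n)$ mismatched with $\bm F(\bm v^{n-1},\cdot)$. The boundary identity in Claim~\ref{claim1} depends crucially on the matched product $[\bm{SF}](\bm v,\cdot)=\bm S(\bm v)\bm F(\bm v,\cdot)$ from \eqref{SF}, whose third row is $(0,0,\nu\theta(\cdots))$; this structure plus $\partial_\eta q^n|_{\eta=0}=0$ is what kills the boundary integral in \eqref{I_0}. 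When the arguments are mismatched as in your split, the surviving $\eta=0$ contribution is proportional to $\partial_\eta\theta^n\cdot\big[\tfrac{P+q^{n-1}}{P-q^{n-1}}-\tfrac{(P+q^n)q^{n-1}}{q^n(P-q^n)}\big]$, which is of order $|\omega^{n-1}|$ (not zero) since $q^n$ and $q^{n-1}$ differ at the boundary. The resulting boundary term has the cubic form $\int_{\bT}\omega^n(\omega^{n-1})^2|_{\eta=0}\,d\xi$. The trace inequality \eqref{trace} then produces factors like $\|\omega^n\|^{1/2}\|\partial_\eta\omega^n\|^{1/2}\|\omega^{n-1}\|^{1/2}\|\partial_\eta\omega^{n-1}\|^{1/2}$; the only control you have on $\|\partial_\eta\omega^{n-1}\|_{L^2}$ and $\|\omega^{n-1}\|_{L^\infty}$ is the uniform bound $M$ from Proposition~\ref{uniform-bound}, not smallness in $\|\bm w^{n-1}\|_{L^2}$. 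Young's inequality then only yields an upper bound of degree $4/3$ (e.g., $\epsilon\|\partial_\eta\omega^n\|^2+C\|\omega^n\|^{2/3}\|\omega^{n-1}\|^{2/3}$), which is not quadratic in $(\|\bm w^n\|,\|\bm w^{n-1}\|)$, so Gronwall does not close and your claimed estimate $\epsilon\|\partial_\eta\bm w^n\|_{L^2}^2+C_M\|\bm w^n\|_{L^2}\|\bm w^{n-1}\|_{L^2}$ is not actually attainable this way.

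The fix is to decompose in the other order, namely $\bm F(\bm v^n,\partial_\eta\bm v^n)-\bm F(\bm v^{n-1},\partial_\eta\bm v^{n-1})=\bm F(\bm v^n,\partial_\eta\bm w^{n-1})+\big[\bm F(\bm v^n,\partial_\eta\bm v^{n-1})-\bm F(\bm v^{n-1},\partial_\eta\bm v^{n-1})\big]$. The bracketed term is Lipschitz in $\bm w^{n-1}$ pointwise and is handled directly in $L^2$. For the first term, the test factor pairs as $\bm S(\bm v^n)\bm F(\bm v^n,\cdot)=[\bm{SF}](\bm v^n,\cdot)$ with matching arguments; after integration by parts the $\eta=0$ boundary integral reduces (using $(\bm w^n_1,\bm w^n_\vartheta,\bm w^{n-1}_1,\bm w^{n-1}_\vartheta)|_{\eta=0}=\bm 0$ and $\theta^n|_{\eta=0}=\theta^{n-1}|_{\eta=0}=\theta^\ast$) to a coefficient proportional to $\partial_\eta q^n|_{\eta=0}=0$, and hence vanishes identically, exactly as in \eqref{I_0}. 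With this single change the remainder of your argument goes through unchanged and yields the clean quadratic inequality \eqref{pertu}, from which the contraction for small $T_2$ follows.
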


\begin{proof}
Denote
\[
\Psi^n:={\bm v}^{n+1}-{\bm v}^n=(\varphi^n, \psi^n,\omega^n)^{\it{T}},\quad n\geq0.
\]
For $n\geq1$, it follows from (\ref{picard}) that $\Psi^n=\Psi^n(\tau,\xi,\eta)$ satisfies the following initial-boundary value problem in $\Omega_{T_1}$,
\begin{equation}\label{Psi-simp}
\left\{
\begin{aligned}
&\partial_\tau\Psi^n+\bm{A}({\bm v}^n)\partial_\xi\Psi^n
+\bm{F}({\bm v}^n,\partial_\eta {\bm v}^n)\partial_\eta\Psi^n+\bm{G}({\bm v}^n)\Psi^n
-\bm{B}({\bm v}^n)\partial_{\eta}^2\Psi^n\nonumber\\
&\quad=\big[\bm{B}({\bm v}^n)-\bm{B}({\bm v}^{n-1})\big]\partial_{\eta}^2 {\bm v}^n-\big[\bm{A}({\bm v}^n)-\bm{A}({\bm v}^{n-1})\big]\partial_\xi {\bm v}^n\\
&\qquad-\big[\bm{F}({\bm v}^n,\partial_\eta {\bm v}^n)-\bm{F}({\bm v}^{n-1},\partial_\eta {\bm v}^{n-1})\big]\partial_\eta {\bm v}^n
-\big[\bm{G}({\bm v}^n)-\bm{G}({\bm v}^{n-1})\big] {\bm v}^n,\\
&(\varphi^n,\psi^n,\partial_\eta\omega^n)|_{\eta=0}=\mathbf{0},\quad
\lim_{\eta\rightarrow +\infty}\Psi^n(\tau,x,\eta)=\mathbf{0},\quad \Psi^n|_{\tau=0}=\mathbf{0}.
\end{aligned}
\right.
\end{equation}
Similar to the arguments as in the proof of Lemma \ref{energy-est}, we can claim that
\begin{align}\label{pertu}
\begin{aligned}
&\frac{1}{2}\frac{d}{d\tau}\left\langle(\Psi^n)^{\it T}\bm{S}({\bm v}^n), \Psi^n\right\rangle(\tau)
+\|\partial_\eta\Psi^n(\tau)\|^2_{L^2(\Omega)}\\
&\lesssim \mathcal{P}\Big(\|({\bm v}^n-\bar{\bm v})(\tau)\|_{\mathcal{H}^4(\Omega)},\|({\bm v}^{n-1}-\bar{\bm v})(\tau)\|_{\mathcal{H}^4(\Omega)}\Big)
\Big(\|\Psi^{n-1}(\tau)\|^2_{L^2(\Omega)}+\|\Psi^n(\tau)\|^2_{L^2(\Omega)}\Big),
\end{aligned}\end{align}
where $\mathcal{P}(\cdot, \cdot)$ denotes some polynomial. 
Then, from the uniform estimates in Proposition \ref{uniform-bound}
and the positive definiteness of $\bm S({\bm v}^n)$,
applying the Gronwall inequality to (\ref{pertu}) leads to that for $t\in[0,T_1]$,
\begin{align*}
\sup_{0\leq \tau\leq t}\|\Psi^n(\tau,\cdot)\|^2_{L^2(\Omega)}+\int_{0}^{t}\|\partial_\eta\Psi^n(\tau)\|^2_{L^2(\Omega)}d\tau
\lesssim t\sup_{0\leq \tau\leq t}\|\Psi^{n-1}(\tau,\cdot)\|^2_{L^2(\Omega)}.
\end{align*}
Therefore, there exists a proper $T_2\in(0,T_1]$ such that
\begin{align*}
\sup_{0\leq \tau\leq t}\|\Psi^n(\tau,\cdot)\|_{L^2(\Omega)}
\leq \frac{1}{2}\sup_{0\leq \tau\leq t}\|\Psi^{n-1}(\tau,\cdot)\|_{L^2(\Omega)},\quad\forall t\in[0,T_2],
\end{align*}
and we complete the proof of the proposition.
\end{proof}
\subsection{Local-in-time existence and uniqueness}
We now  prove Theorem~\ref{classical-sol} as follows.
\begin{proof}[Proof of Theorem~\ref{classical-sol}]
It follows from Proposition~\ref{uniform-bound} and Proposition~\ref{Vn-Vn-1} that the approximation sequence $\{{\bm v}^n-\bar{\bm v}\}_{n\geq0}$
is a Cauchy sequence in $L^\infty(0,T_\ast;L^2(\Omega))$ with $T_\ast=T_2$ given in Proposition~\ref{Vn-Vn-1}.
Hence, there exists $\bm{\tilde v}=(v_1,\vartheta,w)^{\it T}\in L^\infty(0,T_\ast;L^2(\Omega))$ such that
\[
\lim_{n\rightarrow +\infty}({\bm v}^n-\bar{\bm v})=\bm{\tilde v} \quad \mathrm{in}\quad L^\infty(0,T_\ast;L^2(\Omega)).
\]
From the boundedness given in Proposition~\ref{uniform-bound}, Fatou property for $\mathcal{H}^k(\Omega_{T_\ast})$ guarantees that
$\bm{\tilde v}\in \mathcal{H}^k(\Omega_{T_\ast})$.

On the other hand, by interpolation the sequence
$\{{\bm v}^n-\bar{\bm v}\}_{n\geq 0}$ also converges to $\bm{\tilde v}$ in $\mathcal{H}^{k'}(\Omega_{T_\ast})$ for any $k'< k$.
Thus, 
\begin{equation*}
{\bm v}=(u_1,\theta,q):=\bm{\tilde v}+\bar{\bm v}
\end{equation*}
is a classical solution to the problem (\ref{V-sys}) by letting $n\rightarrow \infty$ in (\ref{picard}).
Moreover, from \eqref{nth-lowbound} we have that for $\bm{v}$ such that
\begin{equation*}
\theta(\tau,\xi,\eta)\geq\delta,\ \delta\leq q(\tau,\xi,\eta)\leq P(\tau,\xi)-\delta,\quad \forall(\tau,\xi,\eta)\in\Omega_{T_\ast}.
\end{equation*}
Finally, the uniqueness of the solution to the problem (\ref{V-sys}) follows immediately from Proposition \ref{Vn-Vn-1}.

\end{proof}

\section{Existence for the original problem}\label{original}
In this section, we show that the solution $(\hat u_1,\hat\theta,\hat h_1)$ of the problem (\ref{lead-trans}) obtained in Corollary \ref{coro} can be transformed to a classical solution of the original initial-boundary value problem (\ref{simp-lead-noP}).

\begin{proof}[Proof of Theorem~\ref{original-sol}]
Firstly, we introduce $$\displaystyle \eta(x,y):=\int_{0}^{y}h_{1,0}(x,\tilde y)d\tilde y,$$ then $\eta\sim y$ since of the upper and lower bounds of $h_{1,0}$
given in \eqref{posi}. We denote by $y(x,\eta)$ the inverse function of $\eta(x,y)$ and define
	\begin{align}\label{def-ueta}
		(\hat u_{1,0},\hat\theta_0,\hat h_{1,0})(x,\eta):=(u_{1,0},\theta_0,h_{1,0})\big(x,y(x,\eta)\big).
	\end{align}
Let us consider the initial-boundary value problem \eqref{lead-trans}, where we replace the variable $(\tau,\xi)$ by $(t,x)$,
with the initial data $(\hat u_{1,0},\hat\theta_0,\hat h_{1,0})(x,\eta)$ in \eqref{def-ueta} and boundary condition $\theta^\ast(t,x)$.  Then, the assumptions on the initial boundary values in Theorem \ref{original-sol} and the fact $\eta\sim y$ imply the assumptions of Corollary \ref{coro}. Consequently, we know from
Corollary \ref{coro} that there exist a $T_\ast\in(0,T]$ such that \eqref{lead-trans} admits a unique classical solution
$(\hat u_1,\hat\theta,\hat h_1)(t,x,\eta)$ in $D_{T_\ast}$ satisfying
\begin{equation}\label{positi}
\hat\theta(t,x,\eta),~\hat h_{1}(t,x,\eta)\geq \delta,\quad
\frac{1}{2}\hat h_{1}^2(t,x,\eta) \leq P(t,x)-\delta, \quad \forall (t,x,\eta)\in D_{T_\ast}.
\end{equation}

We define $\psi=\psi(t,x,y)$ by the relation
\begin{equation}\label{invers}
y=\int^{\psi(t,x,y)}_0\frac{d\eta}{\hat h_1(t,x,\eta)}.
\end{equation}
Then $\psi$ is well-defined and continuous in $D_{T_\ast}$ by virtue of the continuity of $\hat{h}_1$ and $\hat{h}_1\geq\delta$ given in \eqref{positi}.
 Note that from \eqref{invers},
 \begin{align}\label{par-psi}
   \partial_y \psi(t,x,y)=\hat{h}_1\big(t,x,\psi(t,x,y)\big),
 \end{align}
 which along with the upper and lower bounds of $\hat{h}_1$ given in \eqref{positi}, implies that $\psi(t,x,y)\sim y$.
In other words, one has
\begin{align}\label{bd-psi}
\psi|_{y=0}=0,\quad \psi|_{y\rightarrow+\infty}\rightarrow+\infty.
\end{align}
Also, by using \eqref{def-ueta} it is easy to get
\begin{align}\label{ini-psi}
\psi(0,x,y)=\eta(x,y).
\end{align}
Moreover, direct calculation by using \eqref{invers} gives
\begin{equation}\label{psi-tx}
\left\{\begin{aligned}
&\partial_t \psi(t,x,y)=\hat h_1\big(t,x,\psi(t,x,y)\big)\int^{\psi(t,x,y)}_0\frac{\partial_t\hat  h_1}{\hat h_1^2}(t,x,\eta)d\eta,\\
&\partial_x \psi(t,x,y)=\hat h_1\big(t,x,\psi(t,x,y)\big)\int^{\psi(t,x,y)}_0\frac{\partial_x \hat  h_1}{\hat h_1^2}(t,x,\eta)d\eta,\\
&\partial_{y}^2\psi(t,x,y)=\hat h_1\big(t,x,\psi(t,x,y)\big)\partial_\eta\hat h_1\big(t,x,\psi(t,x,y)\big),\\
& \partial_y^3\psi(t,x,y)=\hat h_1\big(t,x,\psi(t,x,y)\big)
\partial_\eta\big((\hat h_1\partial_\eta\hat h_1)\big(t,x,\psi(t,x,y)\big)\big).
\end{aligned}
\right.
\end{equation}

Next, we define
\begin{equation}\label{(u1,theta,h1)}
(u_1,\theta,h_1)=(u_1,\theta,h_1)(t,x,y):=(\hat u_1,\hat\theta,\hat h_1)\big(t,x,\psi(t,x,y)\big).
\end{equation}
Particular, it holds from \eqref{par-psi} that 
\begin{align}\label{h1}
  h_1(t,x,y)~=~\partial_y\psi(t,x,y).
\end{align}
The boundedness in \eqref{positi} also holds for $(\theta, h_1)$. Then, set
\begin{equation}\label{(u2,h2)-trans}
u_2(t,x,y)=-\frac{(\partial_t\psi+u_1\partial_x\psi-\nu\partial_{y}^2\psi)}{h_1}(t,x,y),\quad h_2(t,x,y)=-\partial_x\psi(t,x,y).
\end{equation}
In the following, we will show that $(u_1, u_2, \theta, h_1, h_2)(t,x,y)$, defined in \eqref{(u1,theta,h1)} and \eqref{(u2,h2)-trans}, is the desired solution of Theorem \eqref{original-sol}.

It is straightforward to verify by using \eqref{psi-tx} that,  $(u_1,\theta,h_1), \partial_y(u_1,\theta,h_1)$
and $\partial_y^2(u_1,\theta,h_1)$ are continuous and bounded in $D_{T_\ast}$, 
$\partial_t(u_1,\theta,h_1), \partial_x(u_1,\theta,h_1)$ and $(u_2,h_2), \partial_y(u_2,h_2)$ are continuous and bounded on any compact sets of $D_{T_\ast}$.
Then, from \eqref{bd-psi} and the boundary conditions of $(\hat{u}_1, \hat{\theta}_1, \hat{h}_1)$ given in \eqref{lead-trans}, we can calculate that
\begin{equation*}
(u_1, u_2, \partial_y h_1, h_2)|_{y=0}=\mathbf{0},\quad \theta|_{y=0}=\theta^\ast(t,x),\quad
\lim_{y\rightarrow +\infty}(u_1,\theta,h_1)=(U,\Theta,H)(t,x).
\end{equation*}
Substituting \eqref{def-ueta} and \eqref{ini-psi} into \eqref{(u1,theta,h1)} gives
the initial condition
$$(u_1,\theta,h_1)|_{t=0}=(\hat u_{1,0},\hat\theta_0,\hat h_{1,0})(\xi,\eta),$$ 
and therefore, we know that $(u_1, u_2, \theta, h_1, h_2)(t,x,y)$ satisfies the initial boundary values of problem \eqref{simp-lead-noP}.

It remains to show $(u_1,u_2,\theta,h_1,h_2)$ satisfies the equations of the original system (\ref{simp-lead-noP}). Note that
the relation 
$$\partial_x h_1+\partial_y h_2=0$$ follows immediately from \eqref{h1} and the definition of $h_2$ given in \eqref{(u2,h2)-trans}.
Differentiating the first equation of $(\ref{(u2,h2)-trans})$ with respect to $y$, it follows from (\ref{par-psi}) and (\ref{psi-tx}) that
\begin{equation}\label{u1x+u2y}
h_1(\partial_x u_1+\partial_y u_2)
=-\partial_\tau\hat h_1+\hat h_1\partial_\xi\hat u_1-\hat u_1\partial_\xi\hat h_1
+\nu\hat h_1\big(\partial_\eta(\hat h_1\partial_\eta\hat  h_1)-(\partial_\eta\hat h_1)^2\big).
\end{equation}
By using the governing equation $(\ref{lead-trans})_3$ for $\hat h_1$, the above identity (\ref{u1x+u2y}) is reduced to
\begin{align*}
h_1(\partial_xu_1+\partial_yu_2)
&=\frac{(1-a)\hat h_1^2}{Q}\hat h_1\partial_\xi\hat  u_1
+\frac{\nu(1-a)\hat h_1^2}{Q}h_1\partial_\eta(\hat h_1\partial_\eta\hat h_1)-\frac{(1-a)\hat h_1}{Q}( P_\tau+u_1 P_\xi)\\
&\quad+\frac{a\hat h_1}{Q}
\big[\kappa\hat h_1\partial_\eta(\hat h_1\partial_\eta\hat \theta)+\nu(\hat h_1\partial_\eta\hat h_1)^2+\mu(\hat h_1\partial_\eta\hat  u_1)^2\big]\\
&=\frac{(1-a)h_1^2}{Q}( h_1\partial_x+h_2\partial_y)u_1
+\frac{\nu(1-a) h_1^2}{Q}\partial_{y}^2h_1-\frac{(1-a)h_1}{Q}(P_t+u_1P_x)\\
&\quad+\frac{ah_1}{Q}\big[\kappa\partial_{y}^2\theta+\nu(\partial_yh_1)^2+\mu(\partial_yu_1)^2\big],
\end{align*}
where we have used the facts:
  $$\hat h_1\partial_\eta=\partial_y,\ \hat h_1\partial_\xi=h_1\partial_x+h_2\partial_y.$$
Consequently, we get the equation $(\ref{simp-lead-noP})_4$.

Next, the governing equations of $(u_1,\theta,b_1)$ defined in (\ref{(u1,theta,h1)}) can be shown to satisfy the equations $(\ref{simp-lead-noP})_1$-$(\ref{simp-lead-noP})_3$ respectively.
For instance, it follows from (\ref{psi-tx}) and $(\ref{(u2,h2)-trans})_1$ that
\begin{align*}
\partial_tu_1+(u_1\partial_x+u_2\partial_y)u_1
=\partial_\tau\hat u_1+\hat u_1\partial_\xi\hat u_1+\nu\hat h_1\partial_\eta\hat h_1\partial_\eta\hat u_1.
\end{align*}
By the equation $(\ref{lead-trans})_1$, the above identity is reduced to
\begin{align*}
\partial_tu_1+(u_1\partial_x+u_2\partial_y)u_1
&=\frac{R\hat\theta}{P-\frac{1}{2}\hat h^2_1}\hat h_1\partial_\xi\hat h_1
+\frac{\mu R\hat\theta}{P-\frac{1}{2}\hat h^2_1}\hat h_1\partial_\eta(\hat h_1\partial_\eta\hat u_1)
-\frac{R\hat\theta P_\xi}{P-\frac{1}{2}\hat h^2_1}\\
&=\frac{R\theta}{P-\frac{1}{2}h^2_1}(h_1\partial_x+h_2\partial_y)h_1
+\frac{\mu R\theta}{P-\frac{1}{2}h^2_1}\partial_{y}^2u_1-\frac{R\theta P_x}{P-\frac{1}{2}h^2_1},
\end{align*}
which yields $(\ref{simp-lead-noP})_1$. Thus, we obtain that $(u_1, u_2, \theta, h_1, h_2)$,
defined by \eqref{(u1,theta,h1)} and \eqref{(u2,h2)-trans}, is a classical solution of problem $(\ref{simp-lead-noP})$.
Finally, the uniqueness of the solution to (\ref{simp-lead-noP}) follows from that of the transformed
problem (\ref{lead-trans}).
\end{proof}

\bigskip
\noindent {\bf Acknowledgement.}
 The research of the second author was sponsored by National Natural Science Foundation of China (Grant No. 11743009), Shanghai Sailing Program (Grant No. 18YF1411700) and Shanghai Jiao Tong University (Grant No. WF220441906).  The research of the third author was sponsored by General Research Fund of Hong Kong, CityU No. 11320016.

\bibliography{commhdref}
\end{document}